\let\todon\todo
\renewcommand{\todo}[1]{\todon{\color{magenta}{#1}}}
\newtheorem{theorem}{Theorem}[section]
\theoremstyle{plain}
\newtheorem{conjecture}[theorem]{Conjecture}
\newtheorem{corollary}[theorem]{Corollary}
\newtheorem{example}[theorem]{Example}
\newtheorem{lemma}[theorem]{Lemma}
\newtheorem{proposition}[theorem]{Proposition}
\newtheorem{remark}[theorem]{Remark}
\theoremstyle{definition}
\newtheorem{definition}[theorem]{Definition}
\numberwithin{figure}{section}
\numberwithin{table}{section}
\newcommand{\R}{\mathbb{R}}
\newcommand{\word}[1]{{\color{cyan}\mathbf{\mathtt{#1}}}}
\newcommand{\w}[1]{\word{#1}}
\DeclareMathOperator{\sig}{\mathsf{sig}}
\DeclareMathOperator{\rot}{\mathsf{rot}}
\DeclareMathOperator{\rep}{\mathsf{rep}}
\DeclareMathOperator{\shift}{\mathsf{shift}}
\newcommand\e{\mathsf{e}}
\newcommand\DEF[1]{\textbf{#1}}
\newcommand\itensor{\mathbin{\bullet}}
\newcommand\rcl{\operatorname{{rcl}}}
\newcommand\lcl{\operatorname{{lcl}}}
\newcommand\im{\operatorname{im}}
\newcommand\id{\operatorname{id}}
\newcommand\N{\mathbb{N}}
\newcommand\areaconj{\mathsf{AreaConj}}
\newcommand\vol{\mathsf{vol}}
\newcommand\backwards[1]{{#1}^{-1}}
\newcommand\FL{\mathsf{FL}}
\newcommand{\oeis}[1]{\href{https://oeis.org/#1}{\nolinkurl{#1}}}
\newif\ifshow
\begin{document}
\author{Joscha Diehl (Greifswald University), Rosa Preiß (TU Berlin),\\ Jeremy Reizenstein (Meta AI)}
\title{Conjugation, loop and closure invariants of the iterated-integrals signature}
\date{December 27, 2024}
\maketitle

\begin{abstract}
Given a feature set for the shape of a closed loop, it is natural to ask which features in that set do not change when the starting point of the path is moved.
    For example, in two dimensions, the area enclosed by the path does not depend on the starting point.
    In the present article, we characterize such loop invariants among all those features known as interated integrals of a given path.
    Furthermore, we relate these to conjugation invariants,
    which are a canonical object of study when treating (tree reduced) paths as a group with multiplication given by the concatenation.
    Finally, closure invariants are a third class in this context which is of particular relevance when studying piecewise linear trajectories, e.g. given by linear interpolation of time series.\\[2ex]
 \textsc{Keywords}. invariant features; concatenation of paths; combinatorial \linebreak necklaces; shuffle algebra; free Lie algebra; signed area; signed volume; \linebreak tree-like equivalence.
\end{abstract}

\tableofcontents

\section{Introduction}

Finite-dimensional curves are a fundamental
object of study in both theoretical and applied mathematics.
Invariants of such curves are relevant both from
theoretical and practical perspectives.

Invariants with respect to group actions on the ambient space,
which are often relevant in data science \cite{morales2017physical},
have been studied through the lens of iterated-integrals in
\cite{Chen58_faithful,bib:DR2018,diehl2023moving}.

When acting on the curve itself,
the most well-known invariance built into the signature
is the reparametrization invariance \cite[Proposition 7.10]{friz2010multidimensional};
see also \cite{diehl2020time} for
related invariance to ``time-warping'' in the case
of discrete time series.

In the current work we consider invariance under certain
operations on the curves,
namely 
\begin{itemize}
  \item
    conjugation of paths
    (this corresponds to conjugation of group elements)\newline
    $\leadsto$ \DEF{conjugation invariants}
    (\Cref{sec:conjugation_invariants}),

  \item
    cyclic ``permutation'' of loops\newline
    $\leadsto$ \DEF{loop invariants} (\Cref{sec:loop_invariants}), and

  \item
    closure of paths\newline
    $\leadsto$ \DEF{closure invariants} (\Cref{sec:closure_invariants}).
\end{itemize}

\bigskip

\textbf{Notation}

\begin{itemize}

    \item 
    The \DEF{tensor algebra} over $\R^d$,
    \begin{align*}
        T(\R^d)
        := \bigoplus_{n\ge 0} T_n(\R^d)
        := \bigoplus_{n\ge 0} (\R^d)^{\otimes n},
    \end{align*}
    and the product space
    \begin{align*}
        T((\R^d)) := \prod_{n\ge 0} T_n(\R^d).
    \end{align*}
    Both come equipped with the
    \DEF{concatenation product} $\bullet$ (i.e. the tensor product),
    which we sometimes simply write as  $xy = x\bullet y$,
    as well as the \DEF{shuffle product} $\shuffle$.
    See for example \cite{reutenauer1993free}.
    The coproduct dual to the concatenation product
    is denoted $\Delta_\bullet$, the \DEF{deconcatenation}.

    On $T((\R^d))$
    we use the Lie bracket $[x,y] := xy - yx$.

    For example
    \begin{align*}
        \Delta_\bullet
        \word{143}
        =
        \e \otimes \word{143}
        +
        \word{1} \otimes \word{43}
        +
        \word{14} \otimes \word{3}
        +
        \word{143} \otimes \e.
    \end{align*}
where $\e$ denotes the empty word.
    
    Write $\pi_n$ for the projection onto $T_n := T_n(\R^d)$,
    on either space.
    Write $T_{\le n}(\R^d) := \bigoplus_{0\le i\le n} T_i(\R^d)$.
    
    We use the following bilinear pairing of $T((\R^d))$ and $T(\R^d)$
    \begin{align*}
       \langle \sum_w c_w w, v \rangle :=  c_v.
    \end{align*}

    An element $g \in T((\R^d))$ is \DEF{grouplike}
    if for all $\phi,\psi \in T(\R^d)$
    \begin{align*}
       \langle g, \phi \rangle
       \langle g, \psi \rangle
       =
       \langle g, \phi\shuffle\psi \rangle.
    \end{align*}

    Endowed with the commutator of the concatenation product,
    $[x,y] := xy - yx$, $T(\R^d)$ is a Lie algebra.
    The sub-Lie algebra generated by $\R^d$
    is denote $\FL(\R^d)$. Its elements are called \DEF{Lie polynomials}.
    It can be shown to be the \DEF{free Lie algebra over $\R^d$}.
    It is graded $\FL(\R^d) = \bigoplus_{n\ge 1} \FL_n(\R^d)$
    and its formal completion $\prod_{n\ge 1} \FL_n(\R^d)$
    is called the space of \DEF{Lie series}.

    \item
    Concatenation of curves $A,B$ with matching end- and startpoints
    is denoted $A \sqcup B$.

    \item
    $\N = \{0,1,2,\dots\}$ the non-negative integers.
    
\end{itemize}

\subsection{Acknowledgements}
For all \nolinkurl{AXXXXXX}-denoted integer sequences, we refer to the OEIS \cite{oeis}.
R.P.\ acknowledges support from DFG CRC/TRR 388 “Rough Analysis, Stochastic Dynamics and Related Fields”, Project A04.
R.P.\ would like to thank Kurusch Ebrahimi-Fard, Xi Geng and Michael Joswig for interesting suggestions.
Furthermore, R.P.\ thanks Felix Lotter for the collaboration on the project \cite{lotterpreiss24},
which gave valuable impulses for the present article, as well as acknowledges a guest status at MPI MiS Leipzig that facilitated the collaboration.
J.D.\ acknowledges support from DFG (Project 539875438, SPP ``Combinatorial Synergies'').

\section{Conjugation invariants}
\label{sec:conjugation_invariants}

\newcommand\EL{\phi}

We consider the groupoid of smooth paths
in $\R^d$ modulo thin homotopy (alternatively: up to reparametrization
and tree-like equivalence,
\cite{boedihardjo2016signature}).
Paths $A,B$ with fitting end- and start-point
can be concatenated, denoted by $A \sqcup B$,
and every path has an inverse $A^{-1}$
which is (the equivalence class of) the
path run backwards.
The iterated-integrals signature $\sig$ is a well-defined function
from this groupoid to $T((\R^d))$. For a path $B:[a,b]\to\R^n$, it is defined recursively on its components by $\left<\sig(B),\e\right>=1$ and, for any letter $i$ and any word $w$, $\left<\sig(B),w i\right>=\int_{t=a}^{t=b}\left<\sig(B|_{[a,t]}),w\right>dB^i(t)$. An introduction to signatures is given by \cite{OxSigIntro}. It is from this signature object that we seek to find invariants.

\begin{definition}
  An element $\EL\in T(\R^d)$ is called a \DEF{conjugation invariant} if for any two paths $A$ and $B$, $\left<\sig(A),\EL\right> = \left<\sig(B\sqcup A\sqcup B^{-1}),\EL\right>$.
Equivalently, if for any two paths $A$ and $B$, $\left<\sig(A\sqcup B),\EL\right> = \left<\sig(B\sqcup A), \EL\right>$.
\end{definition}

For example, single letters are conjugation invariant, since conjugate paths have the same total displacement.
Alternatively, this follows because addition of the displacement vectors is commutative.
The shuffle of conjugation invariants is conjugation invariant, for similar reasons as in \cite{bib:DR2018},
and obviously conjugation invariants form a linear space.
Hence, conjugation invariants form a (graded) subalgebra of $(T(\R^d),\shuffle)$.
\newcommand\conjInvariants{\mathsf{ConjInv}}
We write
\begin{align*}
    \conjInvariants = \bigoplus_{n \ge 0} \conjInvariants_n,
\end{align*}
for this graded subalgebra.

In \autoref{fig:conj}, two two-dimensional paths which are conjugate are shown.
It is evident that the signed area of these two paths differs, hence $\frac12(\word{12}-\word{21})$ is \emph{not} a conjugation invariant.

\tikzset{every picture/.style={line width=0.75pt}} 

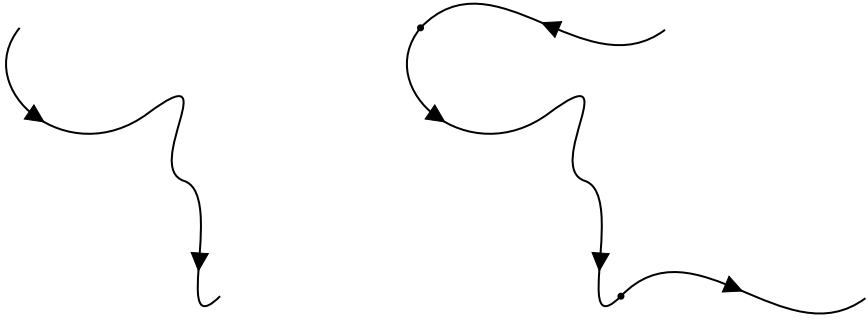
\begin{figure}
\begin{tikzpicture}[x=0.75pt,y=0.75pt,yscale=-1,xscale=1]

\draw    (328,56.5) .. controls (365,17.5) and (410,87.5) .. (450,57.5) ;
\draw [shift={(388.5,53.89)}, rotate = 22.76] [fill={rgb, 255:red, 0; green, 0; blue, 0 }  ][line width=0.08]  [draw opacity=0] (8.93,-4.29) -- (0,0) -- (8.93,4.29) -- cycle    ;

\draw    (428,191.5) .. controls (465,152.5) and (510,222.5) .. (550,192.5) ;
\draw [shift={(488.5,188.89)}, rotate = 202.76] [fill={rgb, 255:red, 0; green, 0; blue, 0 }  ][line width=0.08]  [draw opacity=0] (8.93,-4.29) -- (0,0) -- (8.93,4.29) -- cycle    ;
\draw    (328,56.5) .. controls (303,87.5) and (352,129.5) .. (392,99.5) .. controls (432,69.5) and (389,126.5) .. (410,133.5) .. controls (412.03,134.18) and (413.6,135.49) .. (414.8,137.3) .. controls (416.8,140.3) and (417.8,144.65) .. (418.21,149.65) .. controls (420.12,172.85) and (409.42,210.08) .. (428,191.5) ;
\draw [shift={(340.1,103.82)}, rotate = 214.11] [fill={rgb, 255:red, 0; green, 0; blue, 0 }  ][line width=0.08]  [draw opacity=0] (8.93,-4.29) -- (0,0) -- (8.93,4.29) -- cycle    ;
\draw [shift={(417.2,178.7)}, rotate = 274.15999999999997] [fill={rgb, 255:red, 0; green, 0; blue, 0 }  ][line width=0.08]  [draw opacity=0] (8.93,-4.29) -- (0,0) -- (8.93,4.29) -- cycle    ;

\draw [fill] (328,56.5) circle (.2ex);
\draw [fill] (428,191.5) circle (.2ex);

\begin{scope}[xshift=-150]
\draw    (328,56.5) .. controls (303,87.5) and (352,129.5) .. (392,99.5) .. controls (432,69.5) and (389,126.5) .. (410,133.5) .. controls (412.03,134.18) and (413.6,135.49) .. (414.8,137.3) .. controls (416.8,140.3) and (417.8,144.65) .. (418.21,149.65) .. controls (420.12,172.85) and (409.42,210.08) .. (428,191.5) ;
\draw [shift={(340.1,103.82)}, rotate = 214.11] [fill={rgb, 255:red, 0; green, 0; blue, 0 }  ][line width=0.08]  [draw opacity=0] (8.93,-4.29) -- (0,0) -- (8.93,4.29) -- cycle    ;
\draw [shift={(417.2,178.7)}, rotate = 274.15999999999997] [fill={rgb, 255:red, 0; green, 0; blue, 0 }  ][line width=0.08]  [draw opacity=0] (8.93,-4.29) -- (0,0) -- (8.93,4.29) -- cycle    ;
\end{scope}
\end{tikzpicture}
\caption{Two conjugate paths.
  Their signatures will agree on the conjugation invariants.
  It can clearly be seen that their signed areas are different, so $\word{12}-\word{21}$ is not a conjugation invariant.}
\label{fig:conj}
\end{figure}

\begin{definition}
    Given a word $w$, the element $\rot(w)\in T(\R^d)$ is the sum of its ``rotations'', i.e. cyclic permutations.
\end{definition}
\begin{remark}
This is related to
Poincar\'e's concept of \emph{equipollent}, \cite[Theorem 3.3]{bib:TT1999}.
\end{remark}

For example
\begin{align*}
\rot(\word{1})&=\word{1}\\
  \rot(\word{11})&=2\,\word{11}\\
  \rot(\word{12})&=\word{12}+\word{21}\\
  \rot(\word{122})&=\word{122}+\word{212}+\word{221}\\
  \rot(\word{1212})&=2(\word{1212}+\word{2121})
\end{align*}
Note that for a word $w$, all the words occuring in $\rot{w}$ have the same coefficient, which is $\rep(w)$, the number of times $w$ contains its smallest repeating pattern.
\begin{align*}
  \rep(w):=\max\left\{n\mid\exists v:\;w=v^n\right\}
\end{align*}

\begin{proposition}
  \label{prop:rot}
  Given a word $w$, $\rot(w)$ is a conjugation invariant.
\end{proposition}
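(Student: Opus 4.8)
The plan is to push everything down to the level of the signature via the multiplicativity of the signature under concatenation, $\sig(A \sqcup B) = \sig(A) \bullet \sig(B)$ (Chen's identity), which turns the claim into a purely algebraic identity on the tensor algebra. Using the equivalent formulation of conjugation invariance, it suffices to show $\langle \sig(A\sqcup B), \rot(w)\rangle = \langle \sig(B\sqcup A), \rot(w)\rangle$; writing $g := \sig(A)$ and $h := \sig(B)$ this reads $\langle g\bullet h, \rot(w)\rangle = \langle h\bullet g, \rot(w)\rangle$. I expect this to hold for arbitrary $g,h\in T((\R^d))$, with no appeal to grouplikeness, so I would state and prove it at that level of generality.

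First I would expand both pairings using the duality between $\bullet$ and the deconcatenation $\Delta_\bullet$: for any word $u$ one has $\langle g\bullet h, u\rangle = \sum \langle g, u_{(1)}\rangle \langle h, u_{(2)}\rangle$, the sum running over all splittings of $u$ into a prefix $u_{(1)}$ and a suffix $u_{(2)}$. Applying this to each rotation $w^{(j)} = a_{j+1}\cdots a_{j+n}$ (letters indexed cyclically mod $n$) appearing in $\rot(w) = \sum_{j=0}^{n-1} w^{(j)}$, the quantity $\langle g\bullet h, \rot(w)\rangle$ becomes a double sum over a rotation index $j\in\{0,\dots,n-1\}$ and a split index $k\in\{0,\dots,n\}$, whose $(j,k)$ term is $\langle g, u\rangle\langle h, v\rangle$ with $u = a_{j+1}\cdots a_{j+k}$ and $v = a_{j+k+1}\cdots a_{j+n}$. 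The geometric picture is that a term places two (possibly coinciding) cut points on the cyclic word $w$ and assigns the first arc to $g$ and the complementary arc to $h$.

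The key observation is that the reversed concatenation $vu = a_{j+k+1}\cdots a_{j+n}a_{j+1}\cdots a_{j+k} = w^{(j+k)}$ is again a rotation of $w$. Consequently the map $\tau(j,k) := \big((j+k)\bmod n,\; n-k\big)$ sends the index set $\{0,\dots,n-1\}\times\{0,\dots,n\}$ to itself, and a short computation shows $\tau\circ\tau = \id$ together with $u_{\tau(j,k)} = v_{j,k}$ and $v_{\tau(j,k)} = u_{j,k}$; that is, $\tau$ is an involution that swaps the prefix with the suffix of each term. Reindexing the sum for $\langle h\bullet g, \rot(w)\rangle$ along $\tau$ then turns its $(j,k)$ term $\langle h, u\rangle\langle g, v\rangle$ into $\langle h, v\rangle\langle g, u\rangle$, which is precisely the $(j,k)$ term of $\langle g\bullet h, \rot(w)\rangle$; summing yields the asserted equality and hence the proposition.

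The only genuine care is in verifying that $\tau$ is an involution and really exchanges prefix and suffix, which is where the cyclic wrap-around and the degenerate boundary splits $k=0,n$ (empty prefix or suffix) must be handled; I expect this bookkeeping to be the main, though routine, obstacle. An alternative presentation that sidesteps the boundary fuss is to rephrase the double sum as a sum over ordered pairs of cut points on the $n$-gon and invoke the evident symmetry under swapping the two cuts.
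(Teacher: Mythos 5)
Your proof is correct and takes essentially the same approach as the paper's: expand $\langle g\bullet h,\rot(w)\rangle$ via Chen's identity into a sum over splittings $uv$ of the rotations of $w$, and use that $uv$ is a rotation of $w$ if and only if $vu$ is --- your explicit involution $\tau$ on index pairs is just a more careful bookkeeping of this swap, which sidesteps the $\rep(w)$ multiplicity factor the paper tracks instead. Your observation that grouplikeness is unnecessary, i.e.\ that $\langle g\bullet h - h\bullet g,\rot(w)\rangle = 0$ for arbitrary $g,h\in T((\R^d))$, also appears in the paper, as its second proof of the proposition (stated there as $\langle [M,N],\rot(w)\rangle=0$).
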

\begin{proof}
  Let $A$ and $B$ be paths.
  Chen's identity
  (\cite[Theorem 7.11]{friz2010multidimensional}) states that for any word w,
  \begin{align}
    \left<\sig(A\sqcup B), w\right>=\sum_{uv=w}\left<\sig(A),u\right>\left<\sig(B),v\right>
  \end{align}

  Let $w$ be a word.
  If the word $v$ occurs in $\rot(w)$, then any rotation of $v$ will as well.
  \begin{align*}
    \left<\sig(A\sqcup B), \rot(w)\right>
    &=\rep(w)\sum_{uv\text{ occurs in }\rot(w)}\left<\sig(A),u\right>\left<\sig(B),v\right>
    \\&=\rep(w)\sum_{vu\text{ occurs in }\rot(w)}\left<\sig(A),u\right>\left<\sig(B),v\right>
    \\&=\left<\sig(B\sqcup A), \rot(w)\right>\qedhere
  \end{align*}
\end{proof}

\begin{remark}
    If $v$ and $w$ are words, then $v$ occurs in $\rot(w)$ if and only if $\rot(v)=\rot(w)$.
    Thus the relation given by
    \begin{align*}
      v\sim w\quad\text{if}\quad v\text{ occurs in }\rot(w)
    \end{align*}
    is an equivalence relation.
    Values of the form $\rot(w)$ for words $w$ of length $m$ correspond to $m$-ary necklaces (\cite[Chapter 7]{reutenauer1993free}).
Their count is the sum of the dimensions of the graded parts
    of the free Lie algebra, $\FL_k(\R^d)$,
    with $k$ dividing $m$, which correspond to aperiodic necklaces.
\end{remark}

\begin{table}
\centering
\begin{tabular}{c||cccc}
\hline
Level&\shortstack{signature\\elements}&\shortstack{logsignature\\elements\\\oeis{A001037}}&\shortstack{conjugation\\invariants\\\oeis{A000031}}&\shortstack{minimal $\shuffle$-generators\\of conj. inv.}\\
\hline
1&2&2&2&2 \hphantom{(2)}\\
2&4 (6)& 1 (3)& 3 (5) & 0 (2)\\
3&8 (14)& 2 (5)&4 (9) & 0 (2)\\
4&16 (30)&3 (8)&6 (15) & 1 (3)\\
5&32 (62)&6 (14)&8 (23) &0 (3)\\
6&64 (126)&9 (23)&14 (37)&4 (7)\\
\hline
\end{tabular}
\caption{Sizes of various sets for two-dimensional paths, along with cumulative sums, ignoring level 0.
Up to level $6$ the algebra appears free (the last column is equal to the Euler transform
of the first column).
}
\end{table}
\begin{table}
\centering
\begin{tabular}{c||cccc}
\hline
Level&\shortstack{signature\\elements}&\shortstack{logsignature\\elements\\\oeis{A027376}}&\shortstack{conjugation\\invariants\\\oeis{A001867}}&\shortstack{minimal $\shuffle$-generators\\of conj. inv.}\\
\hline
1&3&3&3&3\\
2&9 (12)& 3 (6)& 6 (9) & 0 \\ 3&27 (39)& 8 (14)&11 (20) & 1 \\ 4&81 (120)&18 (32)&24 (44) & 6\\
5&243 (363)&48 (80)&51 (95) & 6\\
6&729 (1092)&116 (196)&130 (225) & 38\\
\hline
\end{tabular}
\caption{
Sizes of various sets for three-dimensional paths, along with cumulative sums, ignoring level 0.
The algebra is not free: if it were,
the $38$ in the last row would be a $37$.
}
\label{tab:conj_d3}
\end{table}

\subsection{Characterizing conjugation invariants}

\begin{proposition}
    \label{prop:conj_invariants}
For an element $\EL\in T(\R^d)$
    the following are equivalent
    \begin{enumerate}
        \item $\EL$ is conjugation invariant.
        
        \item
        For all grouplike elements $g,h$
        \begin{align*}
            \langle h g h^{-1}, \EL \rangle = \langle g, \EL \rangle.
        \end{align*}
        
        \item
        For all Lie polynomials $L$ and all elements $q \in T_{\ge 1}(\R^d)$
        \begin{align}
            \langle [L, q], \EL \rangle = 0.
        \end{align}

    \item
      For every letter $\word{i}$
      and all elements
      $q \in T_{\ge 1}(\R^d)$
      \begin{align}
          \langle [\word{i}, q], \EL \rangle = 0.
      \end{align}    
    \end{enumerate}    
\end{proposition}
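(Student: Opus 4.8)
The plan is to prove the chain of equivalences in the grouping $(1)\Leftrightarrow(2)$, $(2)\Leftrightarrow(3)$ and $(3)\Leftrightarrow(4)$, which moves stepwise from the path-level statement to a purely algebraic one. The two ``soft'' endpoints $(1)$ and $(4)$ are linked to the algebraically convenient middle condition $(3)$, with $(2)$ serving as the bridge between the analytic and the algebraic worlds.

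For $(1)\Leftrightarrow(2)$ I would use that $\sig$ takes grouplike values, that $\sig(B^{-1})=\sig(B)^{-1}$, and Chen's identity, so that for $g:=\sig(A)$ and $h:=\sig(B)$ one has $\sig(B\sqcup A\sqcup B^{-1})=hgh^{-1}$. Then $(2)\Rightarrow(1)$ is immediate, since $g,h$ are grouplike. For $(1)\Rightarrow(2)$ I would invoke the realization fact that \emph{every} grouplike element of $T((\R^d))$ is the signature of some path in the groupoid under consideration (see \cite{boedihardjo2016signature}), so that arbitrary grouplike $g,h$ arise as $\sig(A),\sig(B)$ and $(1)$ applies verbatim. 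The $(3)\Leftrightarrow(4)$ step is the cleanest: $(3)\Rightarrow(4)$ holds because letters are Lie polynomials. For $(4)\Rightarrow(3)$ set $V:=\{L\in\FL(\R^d):\langle[L,q],\EL\rangle=0\text{ for all }q\in T_{\ge 1}(\R^d)\}$, a linear subspace containing every letter by $(4)$. The Jacobi identity gives $[[L_1,L_2],q]=[L_1,[L_2,q]]-[L_2,[L_1,q]]$, and since $[L_2,q],[L_1,q]\in T_{\ge 1}(\R^d)$ whenever $q\in T_{\ge 1}(\R^d)$, the subspace $V$ is closed under the bracket. Hence $V$ is a Lie subalgebra containing the generators $\R^d$, so $V=\FL(\R^d)$, which is exactly $(3)$.

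The analytic heart is $(2)\Leftrightarrow(3)$. For $(3)\Rightarrow(2)$ I would write a grouplike $h$ as $h=\exp(\ell)$ with $\ell$ a Lie series, so that conjugation becomes $hgh^{-1}=\exp(\operatorname{ad}_\ell)(g)=\sum_{k\ge 0}\tfrac{1}{k!}\operatorname{ad}_\ell^{\,k}(g)$, where $\operatorname{ad}_\ell(x)=[\ell,x]$. Because $\EL$ has finite degree and $\operatorname{ad}_\ell^{\,k}(g)\in T_{\ge k+1}(\R^d)$, the pairing with $\EL$ is a finite sum; each term with $k\ge 1$ equals $\langle[\ell,\operatorname{ad}_\ell^{\,k-1}(g)],\EL\rangle$, which vanishes by $(3)$ after truncating $\ell$ to a Lie polynomial and, in the $k=1$ term, replacing $g$ by $g-\e\in T_{\ge 1}(\R^d)$ (legitimate since $[\ell,\e]=0$). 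Thus $\langle hgh^{-1},\EL\rangle=\langle g,\EL\rangle$. For $(2)\Rightarrow(3)$ I would fix a Lie polynomial $L$, take $h=\exp(tL)$, and differentiate the identity $\langle\exp(tL)\,g\,\exp(-tL),\EL\rangle=\langle g,\EL\rangle$ at $t=0$, yielding $\langle[L,g],\EL\rangle=0$ for every grouplike $g$.

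The main obstacle lies precisely in upgrading this last vanishing from grouplike $g$ to \emph{all} $q\in T_{\ge 1}(\R^d)$, as required by $(3)$. The functional $q\mapsto\langle[L,q],\EL\rangle$ is represented by a fixed element $\eta_L\in T(\R^d)$, so the claim reduces to the \emph{density lemma}: no nonzero element of $T(\R^d)$ annihilates all grouplike elements. I would prove this either analytically, from the linear independence of the iterated-integral functions $X\mapsto\langle\sig(X),w\rangle$ over words $w$ (signatures already supply enough grouplike elements), or algebraically via Radford's theorem that $(T(\R^d),\shuffle)$ is the polynomial algebra on Lyndon words \cite{reutenauer1993free}: grouplike elements are exactly its characters, hence correspond to arbitrary value assignments on the Lyndon generators, and distinct words map to linearly independent polynomials in those coordinates, forcing $\eta_L=0$. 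This density statement, together with the realization result used in $(1)\Rightarrow(2)$, is where the genuine content sits; the remaining manipulations are routine.
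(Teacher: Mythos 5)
Your argument has one genuine gap, and it sits in the step $(1)\Rightarrow(2)$: you invoke the ``realization fact'' that \emph{every} grouplike element of $T((\R^d))$ is the signature of some path in the groupoid, citing \cite{boedihardjo2016signature}. That reference proves a \emph{uniqueness} statement (the signature is injective up to tree-like equivalence), not surjectivity, and the claim itself is false for the groupoid of smooth paths: signatures of smooth (hence bounded-variation) paths satisfy the factorial decay $|\langle \sig(X), w\rangle| \le \|X\|_{1\text{-var}}^{|w|}/|w|!$, whereas grouplike elements --- e.g.\ $\exp(\ell)$ for a Lie series $\ell$ with sufficiently fast-growing coefficients --- can violate any such bound; characterizing the image of the signature map is in fact an open problem. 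The repair is exactly where the finite degree of $\phi$ must enter: pick $n$ with $\phi \in T_{\le n}(\R^d)$, observe that $\langle h g h^{-1}, \phi\rangle$ depends only on $\pi_{\le n} h$ and $\pi_{\le n} g$, and use the Chow/Chen theorem (\cite[Theorem 7.28]{friz2010multidimensional}) to realize these \emph{truncated} grouplike elements as truncated signatures of piecewise smooth paths. This is precisely how the paper argues; without some such truncation device your step does not go through.

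The remainder of your proposal is correct and diverges from the paper in ways worth recording. Your $(3)\Leftrightarrow(4)$, via the subspace $\{L \in \FL(\R^d) : \langle [L,q],\phi\rangle = 0 \ \forall q\}$ being closed under brackets by the Jacobi identity, is a cleaner packaging of the paper's \Cref{lem:LieFLA}, which proves $[\FL(\R^d),T(\R^d)] = [\R^d,T(\R^d)]$ by induction on right-combs --- same content, arguably tidier in your form. In $(2)\Rightarrow(3)$, your upgrade from grouplike $g$ to arbitrary $q \in T_{\ge 1}(\R^d)$ via the density lemma (no nonzero element of $T(\R^d)$ annihilates all grouplikes, proved through Radford's theorem by viewing grouplikes as characters of the polynomial shuffle algebra) is a legitimate replacement for the paper's citation of \cite[Lemma 3.4]{bib:DR2018} that grouplikes span each level $T_n$; the two facts are essentially equivalent (the level-wise spanning follows from your lemma by dilation invariance of grouplikes), and your version even yields vanishing on all of $T((\R^d))$. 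Finally, your $(3)\Rightarrow(2)$ via the expansion $h g h^{-1} = e^{\operatorname{ad}_\ell}(g)$, with the $k=1$ term repaired by replacing $g$ with $g - \e$, is a valid purely algebraic substitute for the paper's derivative argument $f'(t) = \langle L g_t - g_t L, \phi\rangle = 0$; both need the truncation-of-$\ell$ remark, which you correctly make.
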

\begin{proof}
  ``1. $\Leftarrow$ 2.'':
  This follows from the fact that
  \begin{align*}
      \sig(B\sqcup A\sqcup B^{-1})
      =
      \sig(B) \sig(A) \sig(B)^{-1},
  \end{align*}
  and the fact that $\sig(B)$ is grouplike.
  
  ``1. $\Rightarrow$ 2.'':
  By assumption, there is some $n \ge 1$
  such that 
  \begin{align}
  \label{eq:XisN}
    \EL \in T_{\le n}(\R^d).
  \end{align}
  Let $g, h$ be given grouplikes.
  By the Chow/Chen theorem (\cite[Theorem 7.28]{friz2010multidimensional}), there exist piecewise smooth paths
  $B, A$ with
  \begin{align*}
      \pi_{\le n} \sig(B) &= \pi_{\le n} h \\
      \pi_{\le n} \sig(A) &= \pi_{\le n} g.
  \end{align*}
  Then, using \eqref{eq:XisN} repeatedly,
  \begin{align*}
    \langle h g h^{-1}, \EL \rangle
    &=
    \langle \pi_{\le n}( h g h^{-1} ), \EL \rangle \\
    &=
    \langle \pi_{\le n}( h )
    \pi_{\le n}(g) \pi_{\le n}(h^{-1} ), \EL \rangle \\
    &=
    \langle \pi_{\le n}( \sig(B) )
    \pi_{\le n}(\sig(A)) \pi_{\le n}(\sig(B)^{-1} ), \EL \rangle \\
    &=
    \langle \sig(B)
    \sig(A)\sig(B)^{-1}, \EL \rangle \\
    &=
    \langle \sig(A), \EL \rangle \\
    &=
    \langle g, \EL \rangle.
  \end{align*}
  
  $1. \Rightarrow 3.$:
  Let $L, g$ be given.
  Then, by 2.,
  for any $t \in \R$
  \begin{align*}
    \langle \exp(t L) g \exp(-t L), \EL \rangle = \langle g, \EL \rangle.
  \end{align*}
  Taking the derivative with respect to $t$, at $t=0$, we deduce
  \begin{align*}
    \langle L g - g L, \EL \rangle = 0,
  \end{align*}
  for any grouplike element $g$.
Since grouplike elements span
  (level by level) the space $T_{\ge 1}((\R^d))$ with
  zero constant term (\cite[Lemma 3.4]{bib:DR2018}) the result follows.
  
  $1. \Leftarrow 3.$:
  Let $h,g$ grouplike be given.
  By 2.
  it suffices to show
  \begin{align*}
      \langle h g h^{-1}, \EL \rangle = \langle g, \EL \rangle.
  \end{align*}
  Let $L := \log h$ and consider
  \begin{align*}
      f(t) := \langle \exp(t L) g \exp(- t L), \EL \rangle.
  \end{align*}
  Then
  \begin{align*}
      f'(t) =
      \langle L \exp(t L) g \exp(- t L)
                - \exp(t L) g \exp(- t L) L, \EL \rangle
        =:
      \langle L g_t - g_t L, \EL \rangle,
  \end{align*}
  with $g_t$ grouplike.
  Hence, by assumption, $f'(t) = 0$ for every $t$.
  Hence
  \begin{align*}
      \langle g, \EL \rangle = f(0) = f(1) = \langle h g h^{-1}, \EL \rangle.
  \end{align*}

  The equivalence of the final statement follows from \Cref{lem:LieFLA}.
\end{proof}

\begin{lemma}
  \label{lem:LieFLA}
  Let $\FL(\R^d)$ be the free Lie algebra over $\R^d$.
  Then
  \begin{align*}
    [ \FL(\R^d), T(\R^d) ]
    =
    [ \R^d, T(\R^d) ].
  \end{align*}    
\end{lemma}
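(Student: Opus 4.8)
The plan is to prove the two inclusions separately. The inclusion $[\R^d, T(\R^d)] \subseteq [\FL(\R^d), T(\R^d)]$ is immediate, since $\R^d = \FL_1(\R^d) \subseteq \FL(\R^d)$, so all the work goes into the reverse inclusion $[\FL(\R^d), T(\R^d)] \subseteq [\R^d, T(\R^d)]$. Since $[\R^d, T(\R^d)]$ is a linear subspace and the bracket is bilinear, I would first reduce to showing that $[L, q] \in [\R^d, T(\R^d)]$ whenever $q \in T(\R^d)$ is arbitrary and $L$ is a Lie monomial, i.e.\ an iterated bracket of single letters. This reduction is legitimate because such Lie monomials span $\FL(\R^d)$ — this is exactly the content of $\FL(\R^d)$ being the sub-Lie algebra generated by $\R^d$.

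I would then argue by induction on the bracket length (equivalently, the homogeneous degree) of the left argument $L$, keeping the right argument $q$ ranging over \emph{all} of $T(\R^d)$. The base case is a single letter $L = \word{i}$, for which $[L, q] \in [\R^d, T(\R^d)]$ holds by definition. For the inductive step, a Lie monomial of length $\ge 2$ is by construction of the form $L = [L_1, L_2]$ with $L_1, L_2$ Lie monomials of strictly smaller length, and the Jacobi identity gives
\[
   [[L_1, L_2], q] = [L_1, [L_2, q]] - [L_2, [L_1, q]].
\]
Here $[L_2, q]$ and $[L_1, q]$ are again elements of $T(\R^d)$, so the induction hypothesis applies to $L_1$ (with right argument $[L_2, q]$) and to $L_2$ (with right argument $[L_1, q]$): both right-hand terms lie in $[\R^d, T(\R^d)]$, and hence so does their difference. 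Passing back through linearity then yields the claim for all of $\FL(\R^d)$.

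The step I would flag as the crux is not a hard computation but the correct \emph{formulation} of the induction: the recursion on the left argument only closes because the induction hypothesis permits an arbitrary right argument. In the Jacobi expansion the new right slots $[L_2,q]$ and $[L_1,q]$ are generic tensor-algebra elements, not Lie elements, so an induction that tried to keep $q$ Lie (or fixed) would fail. Everything else — the grading of $\FL(\R^d)$ guaranteeing that Lie monomials decompose into strictly shorter ones, the sign bookkeeping in Jacobi, and the reduction to a spanning set — is routine, so I do not expect a genuine obstacle here.
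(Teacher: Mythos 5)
Your proof is correct, and its engine is the same as the paper's: a Jacobi-identity induction on the degree of the Lie argument, with the crucial point — which you correctly identify as the crux — that the induction hypothesis must allow the right slot to range over all of $T(\R^d)$, since the Jacobi expansion produces right arguments such as $[L_2,q]$ that are not Lie elements. Where you diverge is the reduction. The paper writes $L$ as a linear combination of right-combs $[\word{i_1},[\word{i_2},\dots [\word{i_{n-1}},\word{i_n}]\dots]]$ and peels off the leading letter, so that after one application of Jacobi one term lands \emph{directly} in $[\R^d,T(\R^d)]$ and the other has a strictly shorter Lie element on the left; this quietly invokes the standard but not entirely trivial fact that right-combs span $\FL(\R^d)$. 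You instead decompose a generic Lie monomial as $[L_1,L_2]$ with both factors strictly shorter and handle both Jacobi terms symmetrically by the induction hypothesis; this needs only the immediate fact that iterated brackets of letters span $\FL(\R^d)$, which is built into its definition as the sub-Lie algebra generated by $\R^d$. Your variant is thus marginally more self-contained (no spanning result beyond the definition is used), at the cost of neither term terminating immediately; the paper's variant trades the reliance on right-comb spanning for a one-sided recursion. Both arguments are complete, and the difference is one of bookkeeping rather than substance.
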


\begin{proof}
  Any $L$ can be written as a linear combination of
  ''right-combs'' $[\word{i_1},[\word{i_2},\dots [\word{i_{n-1}},\word{i_n}]\dots]]$,
  and for such an element and $\phi \in T(\R^d)$, by the Jacobi identity,
\begin{align*}
[ [\word{i_1},[\word{i_2},\dots [\word{i_{n-1}},\word{i_n}]\dots]], \phi ]
    =
    [ [\word{i_2},\dots [\word{i_{n-1}},\word{i_n}]\dots], [\phi, \word{i_1}] ]
    +
    [ [\phi, [\word{i_2},\dots [\word{i_{n-1}},\word{i_n}]\dots]], \word{i_1} ].
  \end{align*}
  The second term is in
  $[T(\R^d), \R^d ] = [\R^d, T(\R^d)]$
  and the first is in
$[\FL_{\le n-1}(\R^d), T(\R^d)]$
  where $\FL_{\le n-1}(\R^d) := \FL(\R^d) \cap T_{\le n}(\R^d)$.
  By induction the claim follows.
\end{proof}

This leads to another proof of
\Cref{prop:rot}.

\begin{proposition}
  For any word $w$,  for any $M,N \in T((\R^d))$
  \begin{align*}
      \langle [M,N], \rot(w) \rangle = 0.
  \end{align*}
  In particular: $\rot(w)$ is a conjugation invariant.
\end{proposition}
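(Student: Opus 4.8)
The plan is to prove the sharper identity $\langle MN, \rot(w)\rangle = \langle NM, \rot(w)\rangle$ directly; subtracting gives $\langle [M,N], \rot(w)\rangle = 0$ by definition of the bracket, and the conjugation invariance follows at once by specializing $M = \sig(A)$, $N = \sig(B)$, since $\sig(A\sqcup B) = \sig(A)\sig(B)$ turns the identity into $\langle \sig(A\sqcup B), \rot(w)\rangle = \langle \sig(B\sqcup A), \rot(w)\rangle$. The only tool required is the duality between the concatenation product and the deconcatenation coproduct $\Delta_\bullet$: for any word $v$,
\[
  \langle MN, v\rangle = \sum_{v = ab}\langle M, a\rangle\,\langle N, b\rangle,
\]
the sum running over all $|v|+1$ splittings of $v$ into a prefix $a$ and a suffix $b$ (empty parts allowed). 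For a fixed word $v$ this is a finite sum, so nothing needs to be said about convergence even though $M, N \in T((\R^d))$.

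First I would write $\rot(w) = \sum_{k=0}^{n-1}\shift^k(w)$ with $n = |w|$ and expand
\[
  \langle MN, \rot(w)\rangle
  = \sum_{k=0}^{n-1}\ \sum_{\shift^k(w) = ab}\langle M, a\rangle\,\langle N, b\rangle .
\]
Each summand is indexed by a shift amount $k\in\{0,\dots,n-1\}$ together with a cut point $j = |a|\in\{0,\dots,n\}$ on $\shift^k(w)$; pictorially, this places two marks on the cyclic word $w$, one fixing the linear representative and one splitting it into $a$ and $b$, and records the scalar $\langle M, a\rangle\langle N, b\rangle$.

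The heart of the argument is the swap $(a,b)\mapsto(b,a)$. Since $ba$ is itself a cyclic rotation of $ab$, and $ab$ ranges exactly over the rotations of $w$, the word $ba$ is again a rotation of $w$, with $(b,a)$ a prefix/suffix split of it; hence $(b,a)$ contributes the term $\langle N, b\rangle\langle M, a\rangle$ to the analogous expansion of $\langle NM, \rot(w)\rangle$. As scalar multiplication commutes, matched summands are equal. It then remains to verify that the swap carries the summands of the first expansion bijectively onto those of the second --- concretely, that $(k,j)\mapsto\big((k+j)\bmod n,\ n-j\big)$ is a permutation (in fact an involution) of the index set $\{0,\dots,n-1\}\times\{0,\dots,n\}$ --- which is a short check, giving $\langle MN, \rot(w)\rangle = \langle NM, \rot(w)\rangle$.

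The single point demanding care, and the main potential pitfall, is the bookkeeping of multiplicities when $w$ is periodic: then different shifts $\shift^k(w)$ coincide as words and each distinct rotation carries the coefficient $\rep(w)$ inside $\rot(w)$. I would sidestep this entirely by indexing all sums by the pair $(k,j)$ --- the shift amount and the cut --- rather than by distinct words, so that the displayed map is a genuine involution of $\{0,\dots,n-1\}\times\{0,\dots,n\}$ and the periodic multiplicities are transported automatically. The degenerate cuts $j=0$ and $j=n$ (empty prefix or suffix) are subsumed uniformly and need no separate discussion.
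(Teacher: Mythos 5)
Your proposal is correct and takes essentially the same route as the paper: both reduce the claim to the identity $\langle MN,\rot(w)\rangle=\langle NM,\rot(w)\rangle$ and prove it by the cut-and-swap matching on cyclic rotations of $w$, your explicit involution on the index pairs $(k,j)$ being a more careful bookkeeping of the paper's one-line reduction to words $M,N$ with $MN$ a rotation of $w$ (your indexing in fact smooths over the paper's small imprecision that $\langle MN,\rot(w)\rangle$ equals $\rep(w)$, not $1$, for periodic $w$ --- harmless, since both sides carry the same factor). The only other difference is cosmetic: you deduce conjugation invariance directly from the definition via Chen's identity, whereas the paper invokes its characterization in \Cref{prop:conj_invariants}.
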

\begin{proof}
    It is enough to verify this for
    $M$ a word of length $m$,
    $N$ a word of length $n$
    with $m+n = |w|$,
    and $MN$ (and then also $NM$) being a cyclic rotation
    of $w$.
    In this case
    \begin{align*}
       \langle MN, \rot(w) \rangle = 1 = \langle NM, \rot(w) \rangle, 
    \end{align*}
    which proves the first claim.

    The second claim then follows from
\Cref{prop:conj_invariants}
\end{proof}

\begin{definition}
  Let $\shift_m$ be the automorphism of words of length $m$, which performs a rotation taking the last letter to the front.
  This is extended linearly to an automophism of level $m$ of the tensor algebra.
  For example $\shift_3(\word{231})=\word{123}$ and
  \begin{align*}
    \shift_5(\word{12345}+3\,\word{12344})=\word{51234}+3\,\word{41234}
  \end{align*}
\end{definition}

\begin{proposition}
  \label{prop:conjugationShift2}
  Let $\EL$ be a conjugation invariant at level $m$.
  Then $\EL=\shift_m(\EL)$.
\end{proposition}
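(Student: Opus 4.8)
The plan is to reduce the claimed fixed-point identity $\EL = \shift_m(\EL)$ to a statement about the individual word-coefficients of $\EL$, and then to extract exactly that statement from the characterization already proved in \Cref{prop:conj_invariants}. Since $\shift_m$ merely permutes the basis of $T_m(\R^d)$, everything should come down to a single commutator identity.

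First I would unwind what $\EL = \shift_m(\EL)$ means coefficientwise. Writing $\EL = \sum_w \langle \EL, w\rangle\, w$ and using that $\shift_m$ moves the last letter to the front, the coefficient of a word $\word{i_1 i_2 \cdots i_m}$ in $\shift_m(\EL)$ equals $\langle \EL, \shift_m^{-1}(\word{i_1 i_2 \cdots i_m})\rangle = \langle \EL, \word{i_2 \cdots i_m i_1}\rangle$. Hence $\EL = \shift_m(\EL)$ is equivalent to
\begin{align*}
  \langle \EL, \word{i_1 i_2 \cdots i_m}\rangle = \langle \EL, \word{i_2 \cdots i_m i_1}\rangle
\end{align*}
for every word of length $m$; that is, the coefficient of a word must agree with the coefficient of its cyclic rotation, which is exactly the necklace-type symmetry one expects of a conjugation invariant.

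Next I would produce this identity directly. Given a word $\word{i_1 i_2 \cdots i_m}$, I set $q := \word{i_2 \cdots i_m} \in T_{m-1}(\R^d) \subseteq T_{\ge 1}(\R^d)$ and take the single letter $\word{i_1}$. The commutator then collapses to a difference of two words,
\begin{align*}
  [\word{i_1}, q] = \word{i_1 i_2 \cdots i_m} - \word{i_2 \cdots i_m i_1},
\end{align*}
so that $\langle [\word{i_1}, q], \EL\rangle = \langle \EL, \word{i_1 i_2 \cdots i_m}\rangle - \langle \EL, \word{i_2 \cdots i_m i_1}\rangle$. Because $\EL$ is a conjugation invariant concentrated at level $m$, the fourth item of \Cref{prop:conj_invariants} forces this pairing to vanish, yielding precisely the rotation identity from the previous paragraph. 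As every length-$m$ word arises as $\word{i_1}q$ for some letter $\word{i_1}$ and some length-$(m-1)$ word $q$, this covers all basis words and finishes the argument.

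The computation itself is routine, so I expect the only real pitfall to be bookkeeping. The main point to get right is the direction of the shift: $\shift_m$ carries the last letter to the front, so by duality it tests the coefficient at the word obtained by moving the \emph{first} letter to the back, and one must match this orientation against the commutator $[\word{i_1}, q]$ rather than its negative. A minor degenerate case — when $\word{i_1 i_2 \cdots i_m}$ already equals $\word{i_2 \cdots i_m i_1}$ — is automatic, since then $[\word{i_1}, q] = 0$ and both sides of the identity trivially coincide.
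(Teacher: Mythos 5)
Your argument is correct and is essentially the paper's own proof: both invoke the letter--commutator criterion (item 4 of \Cref{prop:conj_invariants}) with $\word{i_1}$ the first letter and $q$ the remaining tail of the word, obtaining $\langle \EL, \word{i_1\cdots i_m}\rangle = \langle \EL, \word{i_2\cdots i_m i_1}\rangle$ and hence $\EL = \shift_m(\EL)$. Your extra care with the orientation of $\shift_m$ versus its dual action on coefficients is a nice touch but does not change the route.
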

\begin{proof}
   By
\Cref{prop:conj_invariants},
   for any letter $i$ and any
   $\phi$ having zero constant term
   \begin{align}
     \label{eq:iphi}
     \langle i\phi - \phi i, \EL \rangle = 0.
   \end{align}
   Write $\EL = \sum c_w w$ for some coefficents $c_w \in \R$.
   Take $w=w_1 \dots w_m$ any word and let $\phi = w_2 \dots w_m$, $i= w_1$.
   Then \eqref{eq:iphi} implies
   \begin{align*}
       c_w = c_{w_2 \dots w_m w_1}.
   \end{align*}
   Hence $\EL = \shift_m(\EL)$.
\end{proof}
\begin{theorem}
  \begin{align*}
       \conjInvariants = \operatorname{im} \rot.
   \end{align*} 
\end{theorem}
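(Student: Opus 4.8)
The plan is to prove the two inclusions separately. The inclusion $\operatorname{im}\rot \subseteq \conjInvariants$ is already in hand: \Cref{prop:rot} states precisely that every $\rot(w)$ is a conjugation invariant, and since $\conjInvariants$ is a linear space and $\rot$ is a linear map, its entire image lies in $\conjInvariants$. So all the work is in the reverse inclusion $\conjInvariants \subseteq \operatorname{im}\rot$.

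For the reverse inclusion, I would first reduce to the homogeneous case: both $\conjInvariants$ and $\operatorname{im}\rot$ are graded (the map $\rot$ preserves word length), so it suffices to show that a conjugation invariant $\EL$ concentrated at a single level $m$ lies in $\operatorname{im}\rot$. Write $\EL = \sum_w c_w\, w$, the sum over words of length $m$. The crucial input is \Cref{prop:conjugationShift2}, which gives $\shift_m(\EL) = \EL$; unwinding the definition of $\shift_m$, this says exactly that $c_w = c_{w'}$ whenever $w$ and $w'$ are cyclic rotations of one another. In other words, the coefficient function $w \mapsto c_w$ is constant on each cyclic equivalence class (necklace).

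Now I would group the words of length $m$ by cyclic class. Letting $[w]$ denote the set of distinct rotations of $w$ and $c_{[w]}$ the common value of the coefficients on that class, one can rewrite
\[
  \EL = \sum_{[w]} c_{[w]} \sum_{v \in [w]} v,
\]
the outer sum over necklaces. It remains to recognize each inner ``necklace sum'' $\sum_{v \in [w]} v$ as an element of $\operatorname{im}\rot$. By definition $\rot(w)$ is the sum of all $m$ rotations of $w$, in which each distinct rotation appears exactly $\rep(w)$ times; hence $\rot(w) = \rep(w)\sum_{v\in[w]} v$. Since $\rep(w)\ge 1$, the necklace sum equals the scalar multiple $\rep(w)^{-1}\rot(w)$ and thus lies in $\operatorname{im}\rot$. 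Therefore $\EL$ is a linear combination of elements of $\operatorname{im}\rot$, which is a linear subspace, so $\EL \in \operatorname{im}\rot$.

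The argument is short because the two earlier propositions do the heavy lifting; the only point requiring care --- and the one I would flag as the potential obstacle --- is the factor $\rep(w)$ relating $\rot(w)$ to the plain necklace sum. One must confirm that this factor is a nonzero scalar, so that the necklace sum is genuinely recovered from $\rot(w)$ (equivalently, that $\rot$ does not collapse a necklace class to zero). This is immediate from $\rep(w)\ge 1$, but it is the single place where the combinatorics of periodic words actually enters the proof.
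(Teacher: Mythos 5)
Your proof is correct and follows essentially the same route as the paper: the inclusion $\supseteq$ is \Cref{prop:rot}, and the inclusion $\subseteq$ reduces via \Cref{prop:conjugationShift2} to showing that homogeneous shift-invariant elements lie in $\operatorname{im}\rot$. Your explicit necklace decomposition with the factor $\rep(w)$ (correctly flagged as nonzero) is just a spelled-out version of the paper's one-line averaging identity $\EL = \frac1m \sum_{i=0}^{m-1} \shift_m^i(\EL)$, so the two arguments have identical content.
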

\begin{proof}

    $\supseteq$: \Cref{prop:rot}.
    
    $\subseteq$:
    Using  \Cref{prop:conjugationShift2}
    it remains to show that for $\EL \in T_m$,
    \begin{align*}
        \EL \in \im \rot \Leftrightarrow \EL = \shift_m(\EL).
    \end{align*}
    And indeed, ``$\Rightarrow$'' is immediate.
    Regarding ``$\Leftarrow$'',
    we only have to note for $\EL \in \pi_m \im \rot$
    \begin{align*}
        \EL = \frac1m \sum_{i=0}^{m-1} \shift_m(\EL).
    \end{align*}
\end{proof}

\subsection{Examples}
\subsubsection{Steps}
The first conjugation invariants in two dimensions which is not just a shuffle of letters is at level 4.
A representative new invariant is $\EL = \word{1212}+\word{2121}$.
It is instructive to consider a piecwise linear path as an illustration.
Take the path $\gamma$ which travels $a$ to the right, $b$ up, $c$ to the right and $d$ up as shown in \autoref{fig:steps}.
In this case $\left<\sig(\gamma),\word{2121}\right>=0$ and $\left<\sig(\gamma),\word{1212}\right>= \left<\sig(\gamma),\EL\right>=abcd$.
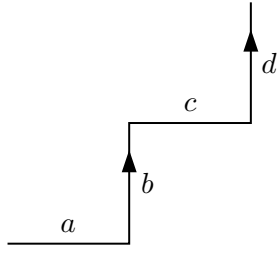
\begin{figure}
\centering
\begin{tikzpicture}[scale=0.8]
  \draw (0,0) -- (2,0) -- (2,2) -- (4,2) -- (4,4);
  \node at (1,0.3) {$a$};
  \node at (2.3,1) {$b$};
  \node at (3,2.3) {$c$};
  \node at (4.3,3) {$d$};
  \draw [shift={(2,1.5)}, fill]  [draw opacity=0] (0.1,-0.3) -- (0,0) -- (-0.1,-0.3) -- cycle    ;
  \draw [shift={(4,3.5)}, fill]  [draw opacity=0] (0.1,-0.3) -- (0,0) -- (-0.1,-0.3) -- cycle    ;
\end{tikzpicture}
\caption{The path $\gamma$ which consists of four axis-aligned line segments.}
  \label{fig:steps}
\end{figure}

\subsubsection{Signed volume}
For paths in more than two dimensions, the first conjugation invariants which are not just shuffles of letters occur at level 3, in the homogeneity class(es) which contain one each of three distinct letters.
Specifically, using the first three letters, $\rot(\word{123})$ and $\rot(\word{132})$ are distinct conjugation invariants, but only their sum $\rot(\word{123})+\rot(\word{132})$ is a shuffle of letters.
Their difference, the new conjugation invariant $\rot(\word{123})-\rot(\word{132})$, is the signed volume as described in \cite{bib:DR2018}.

\subsection{Shuffle structure}

\begin{definition}
We call a finite subset $\{s_1,\dots,s_m\}\subset T(\R^d)$ algebraically independent, or shuffle independent, if there is no polynomial $P\in\R[x_1,\dots,x_m]$ other than $P=0$ such that $P_{\shuffle}(s_1,\dots,s_m)=0$, where $P_{\shuffle}$ is such that each multiplication in $P$ is replaced by a shuffle product.
An infinite subset $S$ of $T(\R^d)$ is called algebraically independent, or shuffle independent, if each finite subset of $S$ is shuffle independent.
\end{definition}

\begin{proposition}\label{prop:infmanyconjinv}
 If $d\ge2$, there are infinitely many shuffle-independent conjugation invariants in $T(\R^d)$.
\end{proposition}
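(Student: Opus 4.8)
The plan is to prove shuffle-independence by \emph{separating} the invariants with the signatures of explicitly chosen paths, thereby turning an algebraic-independence statement into a statement about the rank of a Jacobian. The key observation is that for any grouplike $g$ the evaluation $\phi\mapsto\langle g,\phi\rangle$ is an algebra homomorphism $(T(\R^d),\shuffle)\to\R$, by the very definition of grouplike. Consequently, if $P\in\R[x_1,\dots,x_r]$ satisfies $P_\shuffle(\phi_1,\dots,\phi_r)=0$ in $T(\R^d)$, then pairing with $g=\sig(\gamma)$ (grouplike, and realizing every grouplike truncation by Chow/Chen) gives $P\big(\langle\sig(\gamma),\phi_1\rangle,\dots,\langle\sig(\gamma),\phi_r\rangle\big)=0$ for every path $\gamma$. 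I would record this as a lemma: a finite set $\{\phi_1,\dots,\phi_r\}$ is shuffle-independent as soon as there is a smooth family of paths $\gamma_\theta$, $\theta\in\R^r$, for which $\theta\mapsto(\langle\sig(\gamma_\theta),\phi_j\rangle)_{j=1}^r$ has invertible Jacobian at one point; its image then contains an open set, on which the vanishing polynomial $P$ must be identically zero, forcing $P=0$. This route deliberately sidesteps the structure of $\conjInvariants$ as a subalgebra: since every conjugation invariant turns out to be decomposable in $(T(\R^d),\shuffle)$, the usual ``linear independence modulo decomposables'' criterion is useless here, whereas the separation criterion is insensitive to decomposability.

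It suffices to treat $d=2$: the coordinate inclusion $\R^2\hookrightarrow\R^d$ induces an injective shuffle-algebra homomorphism $T(\R^2)\hookrightarrow T(\R^d)$ that commutes with $\rot$ and, by the bracket characterization of \Cref{prop:conj_invariants}, carries $\conjInvariants(\R^2)$ into $\conjInvariants(\R^d)$; injective algebra maps preserve shuffle-independence. I then fix once and for all an infinite family of conjugation invariants of strictly increasing degree, for instance $\phi_k:=\rot(\word{1}^{k}\word{2}^{k})$ ($k$ ones followed by $k$ twos, degree $2k$), and aim to show that each initial segment $\{\phi_1,\dots,\phi_r\}$ is shuffle-independent. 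This yields an infinite shuffle-independent family, since by definition shuffle-independence of an infinite set is shuffle-independence of each of its finite subsets.

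For each $r$ I would take $\gamma_\theta$ to be the axis-parallel staircase moving right by $x_1$, up by $y_1$, right by $x_2$, and so on up to $y_r$, with $\theta=(x_1,y_1,\dots,x_r,y_r)$. For such piecewise-linear paths the pairings $\langle\sig(\gamma_\theta),\phi_k\rangle$ are explicit polynomials in the step sizes: a word contributes only through time-ordered choices of increments, so that among the rotations of $\word{1}^{k}\word{2}^{k}$ essentially only the ``all-$\word{1}$-before-all-$\word{2}$'' one survives, and one obtains a transparent combinatorial sum over chains of segments. The heart of the argument is to tune the parameters — for instance so that the $k$-th corner is the one that first activates $\phi_k$ — and to show that the resulting Jacobian is triangular with nonzero diagonal, hence invertible. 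This combinatorial evaluation of staircase iterated integrals, together with the verification that the diagonal entries do not vanish, is the step I expect to be the real work; everything else is formal. An attractive alternative, bypassing explicit paths, would be to note that $\dim\conjInvariants_n$ equals the number of $d$-ary necklaces and hence grows like $d^n/n$, which looks incompatible with finite transcendence degree; but making that growth bound rigorous for a non-finitely-generated graded subalgebra is itself delicate, so I would keep the separating-paths argument as the backbone.
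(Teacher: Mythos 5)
Your skeleton is sound, and it is in essence the same as the paper's: the paper also proves shuffle-independence by evaluating explicit $\rot$-invariants on axis-parallel staircases, using (implicitly) that pairing with a grouplike signature is a character of $(T(\R^d),\shuffle)$, and thereby reducing shuffle-independence to algebraic independence of the resulting polynomials in the step lengths. Your separation lemma is correct, up to the small repair that your staircase family has $2r$ parameters, so you need the differential of $\theta\mapsto(\langle\sig(\gamma_\theta),\phi_j\rangle)_{j=1}^r$ to have rank $r$ (a submersion at one point) rather than an ``invertible Jacobian''; and the reduction to $d=2$ is harmless though unnecessary, since one can simply work with the letters $\word{1},\word{2}$ inside $T(\R^d)$, as the paper does.

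The genuine gap is precisely the step you flag as ``the real work'', and the heuristic you offer in its support is false. On a staircase it is \emph{not} true that among the rotations of $\word{1}^{k}\word{2}^{k}$ essentially only $\word{1}^{k}\word{2}^{k}$ itself survives: every rotation is a word with at most three constant blocks, of the form $\word{1}^{a}\word{2}^{k}\word{1}^{k-a}$ or $\word{2}^{b}\word{1}^{k}\word{2}^{k-b}$, and any such word pairs nontrivially with the signature of a staircase containing the corresponding alternation (e.g.\ assigning $\word{2}^{b}$ to $y_1$, $\word{1}^{k}$ to $x_2$, $\word{2}^{k-b}$ to $y_2$ produces the monomial $\frac{1}{b!\,k!\,(k-b)!}\,y_1^{b}x_2^{k}y_2^{k-b}$ with positive coefficient); already for $k=1$ one has $\langle\sig(\gamma_\theta),\word{21}\rangle=\sum_{i<j}y_ix_j\neq0$. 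Consequently $\langle\sig(\gamma_\theta),\rot(\word{1}^{k}\word{2}^{k})\rangle$ is a large sum over all rotations and all time-ordered assignments of letters to segments, and neither the triangularity of your Jacobian nor the nonvanishing of its diagonal is evident; indeed, shuffle-independence of the fixed family $\{\rot(\word{1}^{k}\word{2}^{k})\}_k$ is exactly the content of the proposition for that family, and your proposal establishes none of it. The paper avoids this computation by tailoring the invariants to the path rather than fixing them in advance: on the staircase with $n$ horizontal steps $x_1,\dots,x_n$ interleaved with \emph{unit} vertical steps, the invariants $\rot(\word{1}^{\bullet m+1}(\word{21})^{\bullet n-1}\word{2})$ collapse to the closed form $\frac{1}{n!}x_1\cdots x_n\sum_{i=1}^{n}x_i^{m}$, whose algebraic independence for $m=1,\dots,n$ reduces to the classical independence of the power sums, with no Jacobian analysis; the infinite independent set is then assembled via the exchange property of transcendence bases (a step your nested-family design would have rendered unnecessary, had the missing computation gone through). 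To repair your proof, either adopt such an adapted family, or choose invariants and paths for which genuinely only one rotation contributes, and carry out the evaluation explicitly.
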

\begin{proof}
Let $n$ be arbitrary. We look at the axis parallel path with $n$ steps $x_1,\dots,x_n$ in $x$ direction and $n$ steps of size $1$ in $y$ direction. Under the signature, the conjugation invariants 
\begin{equation*}
\rot(\word{1}^{\bullet m+1}(\word{21})^{\bullet n-1}\word{2})
\end{equation*}
evaluate to 
\begin{equation*}
\frac{1}{n!}x_1\cdots x_n\cdot \sum_{i=1}^n x_i^m
\end{equation*}
By algebraic independence of $(\sum_{i=1}^n x_i^m)_{m=1}^n$, we obtain that there are at least $n$ shuffle-algebraically-independent conjugation invariants.

Since $n$ was arbitrary, by the basis exchange property for algebraic independence (see for example \cite[Chapter~9]{milneFT}\footnote{Algebraic independence is traditionally formulated in the language of field extensions. 
For this, consider the field of fractions of the shuffle algebra $(T(\mathbb{R}^d),\shuffle)$ as a field extension of $\mathbb{R}$.}), we thus conclude that there exists a set of infinitely many shuffle-algebraically-independent conjugation invariants.
\end{proof}

\begin{remark}
   \label{rem:conj_not_free}
   
   For dimension $2$ and up, the conjugation invariants are \emph{not} 
   free as a commutative algebra.
   Indeed, this can be verified by dimension counting.
   
   For dimension $2$ a relation appears on level $12$, see
   \Cref{tab:2dloopinv}.

   For dimension $3$ and $4$ we provide explicit relations.
   For dimension $3$, there is a relation
   on level $6$
   (see  \Cref{tab:conj_d3}), given by
   \begin{align*}
       0 &=        2         \cdot   \w1\shuffle\w1\shuffle\rot(\word{2233})
    +   2         \cdot   \w1\shuffle\w2\shuffle\rot(\word{1323})
        -       \cdot   \w2\shuffle\w2\shuffle\rot(\word{1313}) \\
       &\quad -4       \cdot   \w1\shuffle\w3\shuffle\rot(\word{1223})
        -4       \cdot   \w2\shuffle\w3\shuffle\rot(\word{1132})
    +    2       \cdot   \w3\shuffle\w3\shuffle\rot(\word{1122}) \\
    &\quad +    2       \cdot   \rot(\word{132})\shuffle\rot(\word{132})
        -2       \cdot   \w1\shuffle\w2\shuffle\w3\shuffle\rot(\word{132})
    +      \w1\shuffle\w1\shuffle\w2\shuffle\w2\shuffle\w3\shuffle\w3.
   \end{align*}

   For dimension $4$,
   the following non-trivial relation from \cite[p.496]{adin2021cyclic}
   (there, for certain cyclic quasisymmetric functions)
   \begin{align*}
        \word{1}\shuffle \vol_3(\word2,\word3,\word4) 
      - \word{2}\shuffle \vol_3(\word1,\word3,\word4) 
      + \word{3}\shuffle \vol_3(\word1,\word2,\word4) 
      - \word{4}\shuffle \vol_3(\word1,\word2,\word3),
   \end{align*}
   holds, where
   \begin{align*}
       \vol_3(a,b,c)
       &:= \rot(abc) - \rot(bac) \\
       &= abc + bca + cab - bac - acb - cba
   \end{align*}
   is the three-dimensional signed volume in the $a-b-c$ subspace.
   This relation is comprehensible through the presentation of $vol_3$ as
   \begin{equation*}
       vol_3(a,b,c)=a\shuffle(bc-cb)-b\shuffle(ac-ca)+c\shuffle(ab-ba)
   \end{equation*}
   
\end{remark}

\section{Loop invariants}
\label{sec:loop_invariants}

Loops form an important class of curves, 
in which the start- and endpoint coincide.
They appear
in
topology \cite{hatcher2002algebraic}
and
data science (contours of 2D objects, periodic time series \cite[Section 2.1.4]{bib:KBTdS2014}).

We are now interested in identifying the signature elements which are invariant for loops, i.e.~closed paths, upon changing their starting points.
Two closed paths $A$ and $B$ are said to \DEF{differ only by their starting point} if there exist paths $C$ and $D$ such that $A$ and $C\sqcup D$ are the same path and $B$ and $D\sqcup C$ are the same path.
An example is shown in \autoref{fig:loops}.
\begin{figure}
\centering
\begin{tikzpicture}[scale=0.022, yscale=-1]
  \draw   (155,111.5) .. controls (143,126.5) and (156,160.5) .. (192,179.5) .. controls (228,198.5) and (240,189.5) .. (267,192) .. controls (294,194.5) and (328,185.5) .. (350,140.5) .. controls (372,95.5) and (355,91.5) .. (343,90.5) .. controls (331,89.5) and (290,85.5) .. (278,101.5) .. controls (266,117.5) and (271,137.5) .. (251,133.5) .. controls (231,129.5) and (243,112.5) .. (218,101.5) .. controls (193,90.5) and (167,96.5) .. (155,111.5) -- cycle ;
\draw[fill] (155,111.5) circle (2);
  \node at (145, 105) {$a$};
  \node at (182, 190) {$A$};
  \draw [shift={(192,179.5)}, rotate = 210] [fill={rgb, 255:red, 0; green, 0; blue, 0 }  ][line width=0.08]  [draw opacity=0] (8.93,-4.29) -- (0,0) -- (8.93,4.29) -- cycle    ;
  \begin{scope}[shift={(300,0)}]
  \draw   (155,111.5) .. controls (143,126.5) and (156,160.5) .. (192,179.5) .. controls (228,198.5) and (240,189.5) .. (267,192) .. controls (294,194.5) and (328,185.5) .. (350,140.5) .. controls (372,95.5) and (355,91.5) .. (343,90.5) .. controls (331,89.5) and (290,85.5) .. (278,101.5) .. controls (266,117.5) and (271,137.5) .. (251,133.5) .. controls (231,129.5) and (243,112.5) .. (218,101.5) .. controls (193,90.5) and (167,96.5) .. (155,111.5) -- cycle ;
  \draw[fill] (278,101.5) circle (2);
  \draw [shift={(192,179.5)}, rotate = 210] [fill={rgb, 255:red, 0; green, 0; blue, 0 }  ][line width=0.08]  [draw opacity=0] (8.93,-4.29) -- (0,0) -- (8.93,4.29) -- cycle    ;
  \node at (289, 107) {$b$};
  \node at (182, 190) {$B$};
  \end{scope}
\end{tikzpicture}
\caption{Two paths $A$ and $B$ which are loops differing only by starting point.
$A$ is the concatenation of a path $C$ from $a$ to $b$ and a path $D$ from $b$ to $a$, while $B$ is the concatenation of $D$ with $C$.}
  \label{fig:loops}
\end{figure}
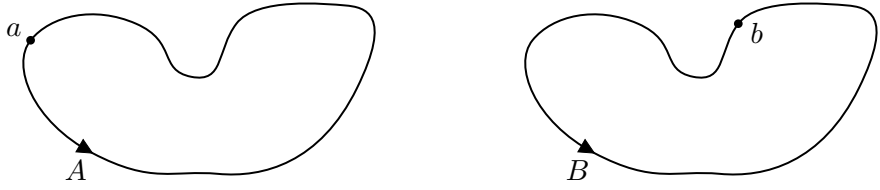

\begin{definition}
  A signature element $\EL\in T(\R^d)$ is called a \DEF{loop invariant} if for any two closed paths $A$ and $B$ which only differ by their starting point, $\left<\sig(A),\EL\right> = \left<\sig(B),\EL\right>$.
  
    A signature element $\EL\in T(\R^d)$ is called a \DEF{conjugation invariant for loops} if for any closed path $A$ and any path $B$, $\left<\sig(A),\EL\right> = \left<\sig(B\sqcup A\sqcup B^{-1}),\EL\right>$.
\end{definition}

The next statement shows that the two notions are equivalent.

\begin{proposition}
  \label{prop:conj_for_loops_implies_loop}
  \label{prop:loop_implies_conj_for_loops}
  $\EL$ is a conjugation invariant for loops if and only if $\EL$ is a loop invariant.
\end{proposition}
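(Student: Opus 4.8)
The plan is to prove both implications directly from the groupoid structure of paths, relying on a single observation: modulo tree-like equivalence, a conjugate of a loop is exactly a cyclic rotation of that loop's decomposition, and conversely. The only facts needed are associativity of $\sqcup$ and the cancellation of inverse segments $P \sqcup P^{-1}$ and $P^{-1} \sqcup P$ (which are tree-like, hence trivial). Both directions turn out to be the same manipulation read in opposite orders, so the argument is short and essentially symmetric.

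For the direction \emph{conjugation invariant for loops $\Rightarrow$ loop invariant}, I would start from two closed paths $A,B$ differing only by their starting point, so that $A = C \sqcup D$ and $B = D \sqcup C$ for suitable $C,D$. The key step is to realize $B$ as a conjugate of $A$: taking the conjugating path to be $C^{-1}$, one computes $C^{-1} \sqcup A \sqcup C = C^{-1} \sqcup C \sqcup D \sqcup C = D \sqcup C = B$, where the middle equality cancels $C^{-1} \sqcup C$. Since $A$ is closed, the defining property of a conjugation invariant for loops applies with conjugating path $C^{-1}$ and yields $\langle \sig(A), \phi \rangle = \langle \sig(B), \phi \rangle$, which is exactly loop-invariance.

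For the converse \emph{loop invariant $\Rightarrow$ conjugation invariant for loops}, I would take any closed path $A$ and any path $B$, set $\tilde A := B \sqcup A \sqcup B^{-1}$, and exhibit $A$ and $\tilde A$ as loops differing only by their starting point. Concretely, put $C := B$ and $D := A \sqcup B^{-1}$; then $\tilde A = C \sqcup D$ by definition, while $D \sqcup C = A \sqcup B^{-1} \sqcup B = A$, again by cancellation. Loop invariance of $\phi$ then gives $\langle \sig(\tilde A), \phi \rangle = \langle \sig(A), \phi \rangle$, which is precisely conjugation invariance for loops.

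The computations are routine; the one point deserving care is the bookkeeping of basepoints: checking that $\tilde A$ is genuinely a closed path (a loop based at the start of $B$) and that every concatenation appearing is admissible, so that the two definitions are in fact applicable in each direction. Beyond this endpoint-matching, I anticipate no real obstacle.
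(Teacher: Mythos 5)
Your proof is correct and takes essentially the same route as the paper: in the forward direction the paper conjugates by $D$ instead of your $C^{-1}$ (an immaterial choice, since $D\sqcup A\sqcup D^{-1}\sim D\sqcup C=B$ just as $C^{-1}\sqcup A\sqcup C\sim D\sqcup C=B$), and in the converse direction the paper uses exactly your decomposition $C=B$, $D=A\sqcup B^{-1}$, with the cancellation of $B^{-1}\sqcup B$ justified, as you note, by tree-like equivalence. No gaps.
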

\begin{proof}~
    $\Rightarrow$:
   (The basic idea: notice that $A$ and $B$ in \autoref{fig:loops} are conjugate paths.)
    
  Let $A$ and $B$ be two closed paths which differ only by starting point.
  Then there exist paths $C$ and $D$ such that $\sig(A)=\sig(C\sqcup D)$ and $\sig(B)=\sig(D\sqcup C)$.
  Then $\sig(B)=\sig(D\sqcup A\sqcup D^{-1})$.
  \begin{align*}
    \left<\sig(B),\EL\right>&=\left<\sig(D\sqcup A\sqcup D^{-1}),\EL\right>
                          =\left<\sig(A),\EL\right>
  \end{align*}
  where the last step used that $\EL$ is a conjugation invariant for loops.

  $\Leftarrow$:
  (The basic idea may be illustrated by \autoref{fig:tadpole}.)
  
  Let $A$ be a closed path and $B$ be a path.
  We observe that $B\sqcup A\sqcup B^{-1}$ is a closed path.
  We further observe that $B\sqcup A\sqcup B^{-1}$ and $A\sqcup B^{-1}\sqcup B$ are closed paths differing only by starting point.
  Therefore
  \begin{align*}
    \left<\sig(B\sqcup A\sqcup B^{-1}),\EL\right> &= \left<\sig(A\sqcup B^{-1}\sqcup B),\EL\right>
    =\left<\sig(A),\EL\right>
  \end{align*}
  where the last step used the fact that two tree-equivalent paths have the same signature.
  
\end{proof}

\begin{figure}
\centering
\begin{tikzpicture}[scale=0.022, yscale=-1]
  \draw   (155,111.5) .. controls (143,126.5) and (156,160.5) .. (192,179.5) .. controls (228,198.5) and (240,189.5) .. (267,192) .. controls (294,194.5) and (328,185.5) .. (350,140.5) .. controls (372,95.5) and (355,91.5) .. (343,90.5) .. controls (331,89.5) and (290,85.5) .. (278,101.5) .. controls (266,117.5) and (271,137.5) .. (251,133.5) .. controls (231,129.5) and (243,112.5) .. (218,101.5) .. controls (193,90.5) and (167,96.5) .. (155,111.5) -- cycle ;
  \draw (155,111.5) .. controls(120,80) and (60,170) .. (20,120);
\draw[fill] (155,111.5) circle (2);
  \draw[fill] (20,120) circle (2);
  \node at (182, 190) {$A$};
  \node at (50,150) {$B$};
  \node at (105,100) {$B^{-1}$};
  \draw[->] (61,150) -- (80, 145);
  \draw[->] (85,105) -- (65, 115);
  \draw [shift={(192,179.5)}, rotate = 210] [fill={rgb, 255:red, 0; green, 0; blue, 0 }  ][line width=0.08]  [draw opacity=0] (8.93,-4.29) -- (0,0) -- (8.93,4.29) -- cycle    ;
\end{tikzpicture}
\caption{A closed path $A$ conjugated by a path $B$.}
  \label{fig:tadpole}
\end{figure}
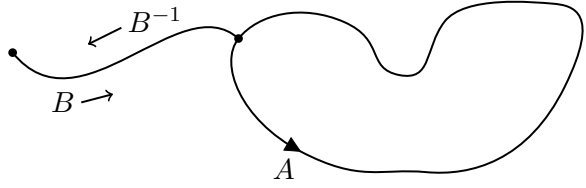

\newcommand\loopInvariants{\mathsf{LoopInv}}
We denote
\begin{align*}
    \loopInvariants = \bigoplus_{n\ge 0} \loopInvariants_n,
\end{align*}
the graded subalgebra of loop invariants
(which, by the previous statements is the same
as the subalgebra of conjugation invariants for loops).

Clearly
\begin{align*}
    \conjInvariants \subset \loopInvariants,
\end{align*}
and the inclusion is strict for $d\ge 2$, since for example
the area 
$\w1\w2-\w2\w1$ is a loop invariant but not a conjugation invariant.

\subsection{Characterizing loop invariants}

Analogously to the previous subsection, we get the following equivalence.
\begin{proposition}
    \label{prop:loop_invariants}
    For an element $\EL\in T(\R^d)$
    the following are equivalent
    \begin{enumerate}
        \item $\EL$ is loop invariant
        \item 
        For all grouplikes $h,g$ satisfying $\langle g, i \rangle = 0$
        for all letters $i = 1, \dots, d$,
        we have
        \begin{align*}
        \langle h g h^{-1}, \EL \rangle = \langle g, \EL \rangle.
        \end{align*}
        \item
        For all Lie polynomials $L$ and all grouplikes $g$ satisfying $\langle g, i \rangle = 0$
        for all letters $i = 1, \dots, d$,
        we have
        \begin{align*}
            \langle [L, g], \EL \rangle = 0.
        \end{align*}
    \end{enumerate}
\end{proposition}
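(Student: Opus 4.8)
The plan is to follow the template of \Cref{prop:conj_invariants} almost verbatim, threading one extra bookkeeping observation throughout: a path is closed precisely when the level-one part of its signature vanishes, and dually a grouplike $g$ arises as the signature (to any finite order) of a closed path exactly when $\langle g, \word{i}\rangle = 0$ for every letter $\word{i}$. Combined with \Cref{prop:loop_implies_conj_for_loops}, which lets me replace ``loop invariant'' by ``conjugation invariant for loops,'' this reduces the statement to the conjugation-invariant argument, but now restricted to grouplikes with vanishing first level.

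For ``1 $\Rightarrow$ 2'' I would mimic the Chow/Chen argument of \Cref{prop:conj_invariants}. Given grouplikes $h,g$ with $\langle g, \word{i}\rangle = 0$ and $\EL \in T_{\le n}(\R^d)$, the Chow/Chen theorem produces piecewise smooth paths $B,A$ with $\pi_{\le n}\sig(B) = \pi_{\le n} h$ and $\pi_{\le n}\sig(A) = \pi_{\le n} g$. The crucial new point is that $A$ is automatically a closed path, since its total displacement is $\langle \sig(A), \word{i}\rangle = \langle g, \word{i}\rangle = 0$. The same chain of truncated manipulations as before, using $\sig(B\sqcup A\sqcup B^{-1}) = \sig(B)\sig(A)\sig(B)^{-1}$ together with the conjugation-invariance-for-loops of $\EL$ applied at the closed path $A$, then yields $\langle h g h^{-1}, \EL\rangle = \langle g, \EL\rangle$. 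Conversely, for ``1 $\Leftarrow$ 2,'' any closed path $A$ has grouplike signature $g = \sig(A)$ with $\langle g, \word{i}\rangle = 0$, and any path $B$ has grouplike $h = \sig(B)$; condition 2 applied to this pair is exactly the defining equality of a conjugation invariant for loops, which is loop invariance by \Cref{prop:loop_implies_conj_for_loops}.

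The equivalence ``2 $\Leftrightarrow$ 3'' is the infinitesimal version and again parallels \Cref{prop:conj_invariants}. For ``2 $\Rightarrow$ 3,'' given a Lie polynomial $L$ and a grouplike $g$ with $\langle g, \word{i}\rangle = 0$, I take $h = \exp(tL)$ (grouplike), apply condition 2 for each $t$, and differentiate $\langle \exp(tL)\, g\, \exp(-tL), \EL\rangle$ at $t=0$ to obtain $\langle [L,g], \EL\rangle = 0$. For ``3 $\Rightarrow$ 2,'' I set $L := \log h$ and $f(t) := \langle \exp(tL)\, g\, \exp(-tL), \EL\rangle$, so that $f'(t) = \langle [L, g_t], \EL\rangle$ with $g_t := \exp(tL)\, g\, \exp(-tL)$ grouplike; condition 3 then forces $f'\equiv 0$, hence $f(0) = f(1)$, which is condition 2.

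The one genuinely new verification, and the step I expect to require the most care, is checking that the constraint $\langle g, \word{i}\rangle = 0$ is preserved under conjugation, so that $g_t$ remains an admissible test element for condition 3 at every $t$. This is a short level-one computation: for grouplikes, $h h^{-1} = \e$ forces $\pi_1(h^{-1}) = -\pi_1(h)$, and since $\langle g,\e\rangle = 1$ one gets $\pi_1(h g h^{-1}) = \pi_1(h) + \pi_1(g) + \pi_1(h^{-1}) = \pi_1(g)$, so that $\pi_1(g_t) = \pi_1(g) = 0$ throughout. The remaining bookkeeping, namely truncating $L = \log h$ to the Lie polynomial $\pi_{\le n} L$ so that the Lie-polynomial hypothesis of condition 3 applies, and noting that all pairings against $\EL \in T_{\le n}(\R^d)$ only see finitely many levels, is routine and identical to the conjugation case.
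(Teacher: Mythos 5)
Your proposal is correct and is exactly the argument the paper intends: the paper gives no written proof of \Cref{prop:loop_invariants}, stating only that it follows ``analogously to the previous subsection,'' i.e.\ by adapting the proof of \Cref{prop:conj_invariants} with grouplikes restricted to zero increment, which is what you do. Your two added verifications --- that the Chow/Chen path realizing $g$ is automatically closed, and that $\pi_1(h g h^{-1}) = \pi_1(g)$ so the zero-increment constraint survives conjugation (needed for $g_t$ in ``3 $\Rightarrow$ 2'') --- are precisely the details the paper leaves implicit, and you handle them correctly.
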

Another characterization of loop invariants
will be given in \Cref{thm:vi}.

We now consider
the linear span of those grouplike $g$ satisfying $\langle g, i \rangle = 0$ for all letters $i = 1, \dots, d$,
used in \Cref{prop:loop_invariants}.ii.

\begin{definition}
    Let
    \begin{align*}
        V &:= \bigoplus_{n\ge 0} V_n \\
        V_n &:= \operatorname{span}_\R \{ \pi_n g \mid g\text{ grouplike},  
        \langle g, \word{i} \rangle = 0 \text{ for all letters } i = 1, \dots, d \},
    \end{align*}
    the ``linear span of grouplike elements with zero increment''
    and
    \begin{align*}
        S   := \bigoplus_{n\ge 1} S_n, \qquad
        S_n :=
        \operatorname{span}_\R\{ \word{i} \shuffle T_{n-1}(\R^d) \mid i =  1, \dots, d \},
    \end{align*}
    the shuffle ideal generated by the letters.
\end{definition}

\begin{lemma}
    \label{lem:VnSn}
    With respect to the usual inner product
    (which declares words to be orthogonal)
    \begin{align*}
        V_n^\bot = S_n, \quad V_n = S_n^\bot.
    \end{align*}
\end{lemma}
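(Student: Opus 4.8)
The plan is to show that $V_n$ and $S_n$ are orthogonal complements of one another. Since $T_n(\R^d)$ is finite-dimensional and $(W)^{\bot\bot}=W$ for every subspace $W$, the two stated identities $V_n^\bot=S_n$ and $V_n=S_n^\bot$ are equivalent, and each follows once I establish (i) $V_n\perp S_n$ and (ii) $\dim V_n+\dim S_n=\dim T_n=d^n$. Step (i) is immediate from the grouplike property: if $g$ is grouplike with $\langle g,\word i\rangle=0$ for every letter, then for any letter $\word i$ and any $\phi\in T_{n-1}(\R^d)$ one has $\langle \pi_n g,\word i\shuffle\phi\rangle=\langle g,\word i\shuffle\phi\rangle=\langle g,\word i\rangle\langle g,\phi\rangle=0$, so every spanning vector of $V_n$ is orthogonal to every spanning vector of $S_n$.

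The real work is the dimension count (ii), which I would carry out by showing that $\dim S_n$ and $\dim V_n$ are governed by the same Hilbert series. For $S_n$: by Radford's theorem the shuffle algebra $(T(\R^d),\shuffle)$ is a polynomial algebra whose generators are the Lyndon words, with exactly $\dim\FL_k(\R^d)$ of them in each degree $k$. The subspace $S$ is precisely the ideal generated by the degree-one generators (the letters), so the quotient is the polynomial algebra on the Lyndon words of degree $\ge 2$, whence
\[
  \sum_{n\ge0}\dim\big(T_n/S_n\big)\,t^n
  =\prod_{k\ge2}\big(1-t^k\big)^{-\dim\FL_k(\R^d)}.
\]
For $V_n$: writing a grouplike element as $\exp_{\bullet}(\ell)$ for a Lie series $\ell$, the increment condition $\langle g,\word i\rangle=0$ is exactly $\pi_1\ell=0$, i.e.\ $\ell\in\prod_{k\ge2}\FL_k(\R^d)$. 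Under the PBW symmetrization isomorphism $\operatorname{Sym}(\FL(\R^d))\xrightarrow{\ \sim\ }T(\R^d)$ (valid in characteristic zero) the element $\exp_{\bullet}(\ell)$ is the image of the symmetric-algebra exponential of $\ell$, and a polarization/Vandermonde argument — scaling $\ell\mapsto t\ell$ and reading off coefficients, which is legitimate since in degree $n$ only the finitely many $\ell_k$ with $k\le n$ intervene — shows that the span of these exponentials, graded piece by graded piece, is exactly the image of $\operatorname{Sym}(\bigoplus_{k\ge2}\FL_k(\R^d))$. Consequently $V=\bigoplus_n V_n$ is carried isomorphically onto $\operatorname{Sym}(\bigoplus_{k\ge2}\FL_k(\R^d))$, whose Hilbert series is the same product as above, giving $\dim V_n=\dim(T_n/S_n)=d^n-\dim S_n$.

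Combining (i) and (ii) yields $V_n=S_n^\bot$, and therefore $V_n^\bot=S_n$. The main obstacle is the second half of the dimension count: one must argue carefully that the linear span of the constrained grouplike elements is as large as possible, i.e.\ that powers of a single Lie element already fill out the relevant symmetric powers. This is precisely where characteristic zero is essential — it underlies both the symmetrization isomorphism and the fact that powers of linear forms span symmetric powers — and where one must be careful that the exponential, although nonlinear, yields a span that is computable degree by degree via the scaling trick. Everything else (Radford's structure theorem and the identification of $S$ with the ideal generated by the letters) is standard.
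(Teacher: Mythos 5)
Your proof is correct, and it reaches the key dimension count by a genuinely different route than the paper. The orthogonality step is the same: your computation of $\langle \pi_n g, \word{i}\shuffle\phi\rangle$ via the grouplike property is exactly the paper's inclusion $V_n \subset S_n^\bot$. For the converse, however, the paper never computes $\dim V_n$ in closed form inside this lemma: it constructs a complement $R_n$ of $S_n$ spanned by shuffle monomials in non-letter \emph{coordinates of the first kind} $\zeta_h$ (dual to the Lyndon basis $P_h$ via $\langle \exp(\sum_h c_h P_h), \zeta_w\rangle = c_w$, citing \cite{bib:DLPR2020} for the fact that the $\zeta_h$ freely shuffle-generate $T(\R^d)$), and then pairs each monomial $\zeta_{h_1}\shuffle\cdots\shuffle\zeta_{h_k}$ against the explicit zero-increment grouplike $g=\exp(\sum_i P_{h_i})$ to get $\dim V_n \ge \dim R_n = \dim S_n^\bot$. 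You instead match two Hilbert series: Radford's theorem identifies $T/S$ with the polynomial algebra on Lyndon words of degree $\ge 2$, while PBW symmetrization plus your scaling-and-polarization argument identifies $V$ with $\operatorname{Sym}\bigl(\bigoplus_{k\ge2}\FL_k(\R^d)\bigr)$, both with series $\prod_{k\ge2}(1-t^k)^{-\dim\FL_k(\R^d)}$. Your route invokes heavier standard machinery (Radford and PBW in characteristic zero) but buys more: your description of $V$ is essentially the paper's later \Cref{prop:VnLiePoly}, and your Hilbert series equals $(1-t)^d/(1-dt)$, which is \Cref{cor:dimVn}, so you prove those results en route. One further point in your favor: the paper's proof checks nondegeneracy of its pairing only on individual monomials $\zeta_{h_1}\shuffle\cdots\shuffle\zeta_{h_k}$, and extending this to an arbitrary nonzero $r\in R_n$ tacitly requires pairing against $\exp(\sum_h c_h P_h)$ with generic coefficients and observing that the result is a nonzero polynomial in the $c_h$ --- precisely the Vandermonde/polarization step you carry out explicitly, so your write-up makes rigorous a point the paper leaves implicit.
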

\begin{proof}
  Since both $V_n$ and $S_n$ are finite-dimensional, and hence closed, subspaces,
  it is enough to show one of the equalities. We show
  the second.

  Clearly 
  \begin{align}
    \label{eq:inclusion}
    V_n \subset S_n^\bot,
  \end{align}
  since for every $g$ with zero increments,
  any letter $i$ and any word $w$ with $|w| = n-1$,
  \begin{align*}
    \langle \pi_n g, \word{i} \shuffle w \rangle
    =
    \langle g, \word{i} \shuffle w \rangle
    =
    \langle g, \word{i} \rangle
    \cdot_\R
    \langle g, w \rangle
    =
    0.
  \end{align*}
We will now show $T_n(\R^d) = S_n \oplus R_n$
  for some space $R_n$, that satisfies
  \begin{align*}
    \forall r \in R_n \setminus \{0\} : \exists v \in V_n: \langle v, r \rangle \not= 0.
  \end{align*}
  This gives 
  \begin{align*}
    \dim V_n \ge \dim R_n = \dim T_n - \dim S_n = \dim S_n^\bot.
  \end{align*}
  This, together with \eqref{eq:inclusion}, implies $V_n = S_n^\bot$.

  It remains to construct $R_n$.
  Consider the Lyndon basis of the free Lie algebra
  \begin{align*}
    P_h, \quad h \text{ Lyndon word},
  \end{align*}
  and the corresponding coordinates of the first kind
  \begin{align*}
    \zeta_h, \quad h \text{ Lyndon word},
  \end{align*}
  satisfying
  \begin{align*}
    \langle \exp( \sum_h c_h P_h ), \zeta_w \rangle = c_w.
  \end{align*}
  It is known, \cite{bib:DLPR2020}, that the coordinates
  of the first kind (freely) shuffle-generate the tensor algebra.
Let
  \begin{align*}
    R_n := \pi_n \operatorname{span}_\R \{ \text{shuffle monomials in non-letter words} \}.
  \end{align*}
  Then
  \begin{align*}
    R_n = \pi_n \operatorname{span}_\R \{ \text{shuffle monomials in non-letter coordinates of the first kind} \}.
  \end{align*}
  Then clearly $T_n = S_n \oplus R_n$.
  For $\zeta_{h_1} \shuffle \dots\shuffle \zeta_{h_k}$
  a monomial in non-letter coordinates of the first kind (with total
  degree $n$) let
  \begin{align*}
    g := \exp( \sum_{i=1}^k P_{h_i} ).
  \end{align*}
  We then get
  \begin{align*}
    \langle \pi_n g, \zeta_{h_1} \shuffle \dots\shuffle \zeta_{h_k} \rangle
    =
    \langle g, \zeta_{h_1} \shuffle \dots\shuffle \zeta_{h_k} \rangle
    =
    \langle g, \zeta_{h_1} \rangle \cdot \ldots \cdot \langle g, \zeta_{h_k} \rangle
    =
    1,
  \end{align*}
  as desired
\end{proof}

\begin{proposition}
\label{prop:VnLiePoly}
    For $n \in \N$,
    \begin{align*}
    V_n =  
    \pi_n \left( \mathbb{R}\oplus\operatorname{span}\{L_1\cdots L_k|L_1,\ldots,L_k\text{ non-letter Lie polynomials}, k\in\mathbb{N}\} \right).
    \end{align*}
\end{proposition}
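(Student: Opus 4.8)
The plan is to prove the two inclusions separately, after reformulating the right-hand side. Write $W_n$ for the claimed space $\pi_n(\R \oplus \operatorname{span}\{L_1\cdots L_k \mid L_j \text{ non-letter Lie}\})$. By \Cref{lem:VnSn} we already know $V_n = S_n^\bot$, so the whole statement reduces to understanding the orthogonal complement of the shuffle ideal generated by the letters. For each letter $\word i$ I would introduce the operator $D_{\word i}$ on $T(\R^d)$ adjoint to shuffle-multiplication by $\word i$, i.e.\ the unique linear map with $\langle D_{\word i} x, y\rangle = \langle x, \word i \shuffle y\rangle$ for all $x,y$. Checking on words, $D_{\word i}$ simply deletes one occurrence of the letter $\word i$ (summed over all occurrences); in particular it is a derivation for the concatenation product with $D_{\word i}(\word j) = \delta_{ij}\e$ and $D_{\word i}(\e)=0$. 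Adjointness then rephrases \Cref{lem:VnSn} as: for $x \in T_n$ one has $x \in V_n$ if and only if $D_{\word i} x = 0$ for every letter $\word i$.

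For the inclusion $W_n \subseteq V_n$ the key step is that $D_{\word i}$ annihilates every non-letter Lie polynomial. I would prove $D_{\word i}(\FL_m(\R^d)) = 0$ for all $m\ge 2$ by induction on $m$, using $\FL_m(\R^d) = [\R^d, \FL_{m-1}(\R^d)]$ (the right-comb spanning already exploited in \Cref{lem:LieFLA}): for $M\in\FL_{m-1}(\R^d)$, the derivation property gives $D_{\word i}[\word j, M] = \delta_{ij}[\e, M] + [\word j, D_{\word i}M]$, where $[\e, M]=0$ since $\e$ is the unit, while $D_{\word i}M=0$ either by the induction hypothesis (for $m\ge 3$) or because $D_{\word i}$ sends letters to scalar multiples of $\e$, which bracket trivially (for $m=2$). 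A non-letter Lie polynomial is a sum of homogeneous components of degree $\ge 2$, hence is also killed. The Leibniz rule then yields $D_{\word i}(L_1\cdots L_k) = \sum_j L_1\cdots (D_{\word i}L_j)\cdots L_k = 0$ for any non-letter Lie polynomials $L_1,\dots,L_k$, so each such concatenation product lies in the common kernel of the $D_{\word i}$, i.e.\ in $S^\bot$. Projecting to level $n$ (and noting $\pi_n\R \subseteq V_n$ trivially) gives $W_n \subseteq V_n$.

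For the reverse inclusion $V_n \subseteq W_n$ I would use that every grouplike $g$ is the concatenation-exponential $\exp_\bullet(\ell)$ of a Lie series $\ell = \sum_{m\ge 1}\ell_m$ with $\ell_m\in\FL_m(\R^d)$ (the same fact invoked as $\log h$ in the proof of \Cref{prop:conj_invariants}). The zero-increment condition $\langle g,\word i\rangle = 0$ forces the degree-one part to vanish, $\ell_1 = 0$, so $\ell$ is a Lie series in non-letter Lie polynomials. Expanding $g = \sum_{k\ge 0}\tfrac1{k!}\,\ell^{\bullet k}$ and extracting level $n$ writes $\pi_n g$ as a linear combination of concatenation products $\ell_{m_1}\bullet\cdots\bullet\ell_{m_k}$ of non-letter Lie polynomials of total degree $n$ (plus the constant term when $n=0$), each of which lies in $W_n$ by definition. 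Taking spans gives $V_n \subseteq W_n$, and hence equality.

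The only genuinely delicate point is the vanishing $D_{\word i}|_{\FL_m(\R^d)} = 0$ for $m\ge 2$: this is precisely what forces concatenation products of non-letter Lie polynomials into $S^\bot$, and it hinges on the unit $\e$ being bracket-central together with the derivation property of $D_{\word i}$. Everything else — the adjointness of $D_{\word i}$, the Leibniz bookkeeping, and the exponential expansion of grouplike elements — is routine. As a consistency check one could instead match dimensions through \Cref{lem:VnSn}, where $\dim V_n = \dim R_n$ with $R_n$ described via shuffle monomials in non-letter words; but the two direct inclusions above settle the claim without a separate count.
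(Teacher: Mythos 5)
Your proof is correct, and both halves take a genuinely different route from the paper's. For the inclusion $W_n \subseteq V_n$, the paper stays on the group side: it observes that $\exp(t_1 L_1)\cdots\exp(t_k L_k)$ is grouplike with zero increment, so its level-$n$ projection lies in $V_n$, and then extracts $L_1\cdots L_k$ as the mixed derivative $\tfrac{d}{dt_1}\big|_{0}\cdots\tfrac{d}{dt_k}\big|_{0}$, using that $V_n$ is closed. You instead pass to the dual characterization $V_n=\bigcap_{i}\ker D_{\word{i}}\cap T_n$ --- which is precisely the $\bigcap_i\ker\partial_{\word{i}}$ description stated (with references) in the remark after \Cref{cor:dimVn}, so you are in effect proving that remark en route --- and your Leibniz induction showing $D_{\word{i}}\FL_m(\R^d)=0$ for $m\ge 2$ is sound, the key points being that $D_{\word{i}}$ is a concatenation derivation and that $[\e,\cdot]=0$. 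For the reverse inclusion the divergence is larger: the paper runs a dimension count, building a complement $R_n$ of $S_n$ from shuffle monomials in a homogeneous dual PBW family $f_i$ (invoking free shuffle generation from \cite{bib:DLPR2020}) and pairing $L_{i_1}\cdots L_{i_k}$ against $f_{i_1}\shuffle\cdots\shuffle f_{i_k}$ to force $\dim W_n\ge \dim V_n$; you argue directly that any zero-increment grouplike equals $\exp_\bullet(\ell)$ for a Lie series with $\ell_1=0$, so $\pi_n g$ expands into finitely many concatenation products of homogeneous Lie elements of degree $\ge 2$, giving $V_n\subseteq W_n$ with no counting at all. Your route is shorter and self-contained modulo the fact that grouplikes are exponentials of Lie series, which the paper already uses implicitly via $\log h$ in the proof of \Cref{prop:conj_invariants}; the paper's route, by contrast, reuses the dual-basis machinery of \Cref{lem:VnSn} and exhibits an explicit complement of $S_n$. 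One small point worth making explicit in your write-up: ``non-letter Lie polynomial'' must be read as ``vanishing degree-one component'' (as your homogeneous reduction tacitly assumes), since allowing nonzero linear combinations of letters would already falsify the statement at $n=1$.
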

\begin{proof}
   Write $W_n$ for the right-hand side of the claimed
   identity.

  ``$\supseteq$'': 
Let $L_1,\dots,L_k$ be homogeneous, non-letter
  Lie elements. Let $n$ be the degree of $L_1 \cdot \ldots \cdot L_k$.
  Then
  \begin{align*}
      \pi_n 
      \exp( t_1 L_1 )
      \dots
      \exp( t_k L_k ) \in V_n,
  \end{align*}
  for all $t_1,\dots,t_k \in \R$,
  since $\langle \exp( t_j L_j ), \word{i} \rangle = 0$, for all $j=1,\dots,k, i=1,\dots,d$.
  
  The space $V_n$ is closed, hence
  \begin{align*}
      V_n \ni
      \frac{d}{dt_1}|_{t_1=0}
      \dots
      \frac{d}{dt_k}|_{t_k=0}
      \exp( t_1 L_1 )
      \dots
      \exp( t_k L_k )
      =
      L_1 \dots L_k,
  \end{align*}
  as desired.
  
We shall obtain a space $R_n$ such
  that $T_n = S_n \oplus R_n$ and such that
  \begin{align*}
      \forall r \in R_n\setminus\{0\}: \exists w \in W_n: \langle w, r \rangle \not= 0.
  \end{align*}
  Then
  \begin{align*}
      \dim W_n \ge \dim R_n = \dim S_n^\bot = \dim V_n,
  \end{align*}
  and, since $W_n \subseteq V_n$ is shown above, we get $W_n = V_n$.
  
  Let $L_i, i \in I$, be a homogeneous basis for the Lie algebra.
  Assume without loss of generality, that $\{1,\dots,d\}\subset I$ and
  $L_i = \word{i}$ for $i=1,\dots, d$.
Let $f_i, i \in I$, be a (homogeneous) realization of the dual basis inside of $T(\R^d)$.
  That is, $f_i \in T(\R^d)$ are homogeneous elements satisfying
  \begin{align*}
      \langle f_i, L_j \rangle = \delta_{i,j}, \qquad i,j\in I.
  \end{align*}
  It is well-known (see for example \cite[Corollary 4.5]{bib:DLPR2020})
  that the $f_i$ freely shuffle-generate all of $T(\R^d)$.
  Define
  \begin{align*}
      R_n := \pi_n \operatorname{span}_\R \{ \text{shuffle monomials in non-letter words} \}.
  \end{align*}
  Then
  \begin{align*}
      R_n := \pi_n \operatorname{span}_\R \{ \text{shuffle monomials in non-letter $f_i$} \}.
  \end{align*}
  As before, $T_n = S_n \oplus R_n$.
  Let $f_{i_1} \shuffle ... \shuffle f_{i_k} \in R_n$ be given
  and assume for simplicity that the $f_{i_\ell}$ are pairwise different
  (otherwise multiplicities appear in what follows).
  Then
  \begin{align*}
      \langle L_{i_1} \cdot \dots \cdot L_{i_k}, 
              f_{i_1} \shuffle ... \shuffle f_{i_k} \rangle
      &=
      \langle \Delta_{\shuffle}^{\otimes k} L_{i_1} \cdot \dots \cdot L_{i_k}, 
              f_{i_1} \otimes ... \otimes f_{i_k} \rangle \\
      &=
      \sum_{\sigma \in S_k}
      \langle L_{i_{\sigma(1)}},  f_{i_1} \rangle
      \dots
      \langle L_{i_{\sigma(k)}},  f_{i_k} \rangle \\
      &=
      \langle L_{i_{1}},  f_{i_1} \rangle
      \dots
      \langle L_{i_{k}},  f_{i_k} \rangle = 1,
  \end{align*}
  as desired.
 
\end{proof}

As immediate corollary of
\Cref{prop:VnLiePoly} we get the following result.
\begin{corollary}\label{cor:dimVn}
$V$ can be identified with the enveloping algebra of
  the first term in the derived series of the free Lie algebra
  $[\mathfrak g, \mathfrak g]$ (also called ``derived free Lie Algebra'').
  Further
  \begin{align*}
    \dim V_n = [ (1-q)^d / (1-d q) ]_{q^n},
  \end{align*}
  the coefficient of $q^n$ in the power series
  $(1-q)^d / (1-d q)$.
\end{corollary}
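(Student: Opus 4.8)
The plan is to combine \Cref{prop:VnLiePoly} with the Poincaré--Birkhoff--Witt (PBW) theorem. The first step is to recognize that the span of non-letter Lie polynomials is exactly the derived subalgebra $[\mathfrak g, \mathfrak g]$ of $\mathfrak g := \FL(\R^d)$. Indeed, the abelianization $\mathfrak g / [\mathfrak g, \mathfrak g]$ of the free Lie algebra is the free abelian Lie algebra on $d$ generators, which is concentrated in degree $1$ and equals $\FL_1(\R^d) = \R^d$. Since every bracket has degree $\ge 2$, the graded ideal $[\mathfrak g, \mathfrak g]$ is contained in $\bigoplus_{n \ge 2} \FL_n(\R^d)$, and comparing graded dimensions with the abelianization forces
\begin{align*}
  [\mathfrak g, \mathfrak g] = \bigoplus_{n \ge 2} \FL_n(\R^d),
\end{align*}
i.e. precisely the non-letter Lie polynomials.

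With this identification, \Cref{prop:VnLiePoly} says that $V$ is the linear span of $1$ together with all concatenation products $L_1 \bullet \cdots \bullet L_k$ of elements $L_i \in [\mathfrak g, \mathfrak g]$. This is exactly the associative subalgebra of $(T(\R^d), \bullet)$ generated by $[\mathfrak g, \mathfrak g]$, which by PBW is a faithful realization of the universal enveloping algebra $U([\mathfrak g, \mathfrak g])$; this proves the first assertion. Moreover PBW supplies a basis of $V$ consisting of the ordered products $L_{h_1} \bullet \cdots \bullet L_{h_k}$ over any fixed homogeneous basis $\{L_h\}$ of $[\mathfrak g, \mathfrak g]$, which is what makes the ensuing dimension count exact.

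For the dimension formula I would pass to Poincaré series. Writing $\ell_n := \dim \FL_n(\R^d)$, the graded PBW basis gives, degree by degree,
\begin{align*}
  \sum_{n \ge 0} (\dim V_n)\, q^n = \prod_{n \ge 2} (1 - q^n)^{-\ell_n}.
\end{align*}
On the other hand $T(\R^d) = U(\mathfrak g)$, so the same PBW argument applied to the full free Lie algebra yields the classical Witt-type identity
\begin{align*}
  \frac{1}{1 - d q} = \sum_{n \ge 0} d^n q^n = \prod_{n \ge 1}(1 - q^n)^{-\ell_n}.
\end{align*}
Since $\ell_1 = d$, the degree-$1$ factor is $(1-q)^{-d}$; dividing it out gives $\prod_{n \ge 2}(1-q^n)^{-\ell_n} = (1-q)^d/(1-dq)$, and hence $\dim V_n = [\,(1-q)^d/(1-dq)\,]_{q^n}$, as claimed.

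There is no deep obstacle here, consistent with this being an immediate corollary, but the step requiring care is the invocation of PBW in the graded setting: one must check that the realization of $U([\mathfrak g, \mathfrak g])$ as the concatenation-subalgebra generated by $[\mathfrak g, \mathfrak g]$ is faithful and respects the grading, so that the ordered monomials form a genuine basis in each homogeneous component. Granting this, both the algebraic identification and the exactness of the Poincaré-series computation follow, and the closed form drops out of the Witt identity.
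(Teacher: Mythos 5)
Your proof is correct, but it reaches the dimension formula by a different computation than the paper. You identify the non-letter Lie polynomials with the derived subalgebra $[\mathfrak g,\mathfrak g]=\bigoplus_{n\ge 2}\FL_n(\R^d)$, realize $V\cong U([\mathfrak g,\mathfrak g])$ inside $T(\R^d)=U(\mathfrak g)$ (with the correct caution that PBW is needed for faithfulness of $U(\mathfrak h)\hookrightarrow U(\mathfrak g)$ for a Lie subalgebra $\mathfrak h$), and then compute the Poincar\'e series as $\prod_{n\ge 2}(1-q^n)^{-\ell_n}$, dividing the Witt-type identity $1/(1-dq)=\prod_{n\ge 1}(1-q^n)^{-\ell_n}$ by its degree-one factor $(1-q)^{-d}$. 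The paper instead works directly with PBW monomials in a homogeneous basis of $\mathfrak g$ and strips out the letters one at a time: monomials of degree $n$ containing the letter $\word{1}$ are in bijection with arbitrary monomials of degree $n-1$, so each successive exclusion multiplies the generating function $1/(1-dq)$ by a factor $(1-q)$, yielding $(1-q)^d/(1-dq)$ after $d$ steps without ever invoking the infinite-product formula. Both arguments rest on the same PBW foundation; yours buys a cleaner conceptual statement (the series of $U([\mathfrak g,\mathfrak g])$ as a quotient of infinite products, plus an explicit justification of the enveloping-algebra identification that the paper leaves implicit in \Cref{prop:VnLiePoly}), while the paper's letter-stripping recursion is more elementary, avoiding the dimensions $\ell_n$ of the free Lie algebra entirely. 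One small point worth making explicit in your write-up: the bijection underlying your abelianization argument for $[\mathfrak g,\mathfrak g]=\bigoplus_{n\ge2}\FL_n(\R^d)$ is fine, but you could shortcut it by noting that every homogeneous Lie polynomial of degree $\ge 2$ is by definition a linear combination of brackets of lower-degree Lie polynomials, giving the inclusion $\supseteq$ directly.
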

\begin{remark}
  $V$ can also be characterized
  as the  
  intersection
  \begin{align*}
      \bigcap_{i=1}^d \ker\partial_{\word{i}},
  \end{align*}
  where $\partial_a w$ is the sum of all words obtained by
  removing, one-by-one, the occurences of $a$ in $w$; $\partial_a$ is a derivation.
  See \cite[1.6.5 Derivations]{reutenauer1993free}
  \cite[Proposition 6.1]{bergeron2008invariants},
   \cite[Section 3]{briand2008sn}.
\end{remark}
\begin{example}
  $d=3$, then
  \begin{align*}
    (1-q)^3 / (1-3 q)
    =
    1 + 3q^2 + 8 q^3 + 24 q^4 + 72 q^5
    + 216 q^6
    + 648 q^7
    + 1944 q^8 + \dots,
\end{align*}
  and hence, for example $\dim V_1 = 0, \dim V_2 = 3, \dim V_8 = 1944$.
  This is OEIS sequence \oeis{A118264}.
\end{example}
\begin{proof}[Proof of \Cref{cor:dimVn}]

  First, 
  \begin{align*}
      [\frac{1}{1-dq}]_{q^n},
  \end{align*}
  is the dimension of $T_n(\R^d)$,
  which in a PBW basis is spanned by all monomials
  in a basis of the free Lie algebra with total degree $n$.
  From these monomials we have to remove all monomials
  that contain some letter $\word{i}$.
  There are
  \begin{align*}
      [\frac{1}{1-dq}]_{q^n}
      -
      [\frac{1}{1-dq}]_{q^{n-1}}
      =
      [\frac{1-q}{1-dq}]_{q^{n}},
  \end{align*}
  many monomials that do not contain $\word{1}$.
  Then, of the remaining monomials there are
  \begin{align*}
        [\frac{1-q}{1-dq}]_{q^{n}}
        -
        [\frac{1-q}{1-dq}]_{q^{n-1}}
        =
        [\frac{(1-q)^2}{1-dq}]_{q^{n}},
  \end{align*}
  many monomials that do not contain $\word{2}$.
  Iterating, we obtain the claimed
  generating function.
\end{proof}

\begin{theorem}
\label{thm:vi}
$\EL\in T(\R^d)$ is a loop invariant if and only if
\begin{equation}
\langle [v,\word{i}],\EL\rangle=0
\end{equation}
for all $v\in V$ and all letters $i$.
\end{theorem}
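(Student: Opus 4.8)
The plan is to build a chain of equivalences starting from the characterization already proved in \Cref{prop:loop_invariants}.iii: $\EL$ is a loop invariant if and only if $\langle [L, g], \EL \rangle = 0$ for every Lie polynomial $L$ and every grouplike $g$ with $\langle g, \word{i}\rangle = 0$ for all letters $i$. Since both $\loopInvariants$ and the defining relations respect the grading, I would work in a fixed homogeneous component, taking $\EL$ homogeneous of degree $N$ and $L$ homogeneous of degree $m$. Then $\pi_N[L,g] = [L, \pi_{N-m}g]$, so $\langle [L,g],\EL\rangle = \langle [L,\pi_{N-m}g],\EL\rangle$, and as $g$ ranges over the grouplikes with zero increment the projections $\pi_{N-m}g$ span $V_{N-m}$ by the very definition of $V$. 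Because $[L,v]$ is homogeneous of degree $m+\deg v$, its pairing with $\EL$ vanishes automatically unless $\deg v = N-m$. Hence \Cref{prop:loop_invariants}.iii is equivalent to
\[
  \langle [L, v], \EL \rangle = 0 \qquad \text{for all Lie polynomials } L \text{ and all } v \in V.
\]

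It then remains to reduce the range of $L$ from arbitrary Lie polynomials to single letters, i.e. to the stated condition $\langle [v, \word{i}], \EL\rangle = 0$ (note $[v,\word{i}] = -[\word{i},v]$, so the sign is irrelevant). The forward implication is trivial, as letters are Lie polynomials. For the converse I would mimic the argument of \Cref{lem:LieFLA}: every Lie polynomial is a linear combination of right-combs $[\word{i_1}, [\word{i_2}, \dots, [\word{i_{n-1}}, \word{i_n}]\dots]]$, and I induct on the length $n$, the base case $n=1$ being exactly the single-letter hypothesis. The structural fact that makes the induction close is the containment
\[
  [\FL(\R^d), V] \subseteq V.
\]
This holds because $V$ is the enveloping algebra of the ideal $[\mathfrak g, \mathfrak g]\subseteq \mathfrak g = \FL(\R^d)$ (\Cref{cor:dimVn}); equivalently, using \Cref{prop:VnLiePoly}, the Leibniz rule turns $[L, L_1\cdots L_k]$ into a sum of products $L_1\cdots[L,L_j]\cdots L_k$ of non-letter Lie polynomials, each $[L,L_j]$ being a Lie element of degree $\ge 3$ and hence again non-letter.

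Granting this, I would write $L = [\word{i_1}, M]$ with $M$ a right-comb of length $n-1$ and apply the Jacobi identity:
\[
  [[\word{i_1}, M], v] = [\word{i_1}, [M, v]] - [M, [\word{i_1}, v]].
\]
In the first term $[M,v]\in V$, so it pairs to zero with $\EL$ by the single-letter hypothesis; in the second term $[\word{i_1},v]\in V$ and $M$ has length $n-1$, so it pairs to zero by the inductive hypothesis. This closes the induction and gives the theorem.

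I expect the main obstacle to be precisely the containment $[\FL(\R^d), V]\subseteq V$: the first reduction is a bookkeeping argument with projections and spans, and the Jacobi step is the same manipulation already used in \Cref{lem:LieFLA}, but the induction only terminates inside $V$ because bracketing with a Lie polynomial preserves membership in $V$. Establishing this cleanly---either through the enveloping-algebra description of \Cref{cor:dimVn} or the explicit products-of-non-letter-Lie-polynomials description of \Cref{prop:VnLiePoly}---is where the real content of the proof lies.
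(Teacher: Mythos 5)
Your proof is correct, but it takes a genuinely different route from the paper's. The paper stays at the group level: by \Cref{prop:conj_for_loops_implies_loop}, loop invariance equals conjugation invariance for loops; the Chen--Chow theorem reduces the conjugating grouplike $h$ to signatures of piecewise linear paths; and since conjugating a zero-increment grouplike by a single linear segment yields again a zero-increment grouplike, the segments can be peeled off one at a time, so conjugation by \emph{linear} paths suffices. One differentiation of $t\mapsto\langle \exp(tz)\,g\exp(-tz),\EL\rangle$ at $t=0$, together with the spanning property defining $V$, then gives exactly $\langle[v,\word{i}],\EL\rangle=0$. You instead start from the infinitesimal characterization already recorded in \Cref{prop:loop_invariants}.iii (arbitrary Lie polynomials $L$) and carry out the reduction from $L$ to letters entirely on the Lie-algebra side, by the same right-comb/Jacobi induction as in \Cref{lem:LieFLA}; the new ingredient that closes your induction, $[\FL(\R^d),V]\subseteq V$, is precisely the infinitesimal shadow of the paper's geometric fact that conjugating a loop by a path produces a loop, and your Leibniz-rule proof of it via \Cref{prop:VnLiePoly} is sound --- each $[L,L_j]$ has degree at least $3$, hence is a non-letter Lie polynomial, so products of non-letter Lie polynomials are preserved (this also strengthens the paper's later lemma that $[v,\word{i}]\in V$ for letters $\word{i}$). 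Your preliminary bookkeeping checks out as well: $\pi_N[L,g]=[L,\pi_{N-m}g]$ for homogeneous $L$ of degree $m$, the projections $\pi_{N-m}g$ span $V_{N-m}$ by the definition of $V$, and degree mismatches pair to zero automatically, so grouplikes may be traded for elements of $V$; the restriction to homogeneous $\EL$ is legitimate since $\loopInvariants$ is graded, as the paper asserts (one can also verify it directly by dilating loops). In sum, the paper's argument is shorter because Chen--Chow does the heavy lifting at the path level, while yours is purely algebraic, makes explicit that $V$ is stable under bracketing with the free Lie algebra, and leans only on what \Cref{prop:loop_invariants} already encodes.
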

\begin{proof}
By \Cref{prop:conj_for_loops_implies_loop}
and \Cref{prop:loop_implies_conj_for_loops},
loop invariants are conjugation invariants for loops.
By the Chen-Chow theorem
(\cite[Theorem 7.28]{friz2010multidimensional}), $\langle hgh^{-1},\EL\rangle=\langle g,\EL\rangle$ for all grouplikes $h$ is equivalent to when we just consider the signatures of piecewise linear paths for $h$. But then since we consider all $g$ with $\langle g,\word{i}\rangle=0$, it is in fact enough to only consider linear paths for $h$, and by derivation and linearization this is equivalent to $\langle [v,\word{i}],\EL\rangle=0$ for all $v\in V$ and all letters $i$.
\end{proof}

The space of loop invariants is thus given by $[V,\R^d]^{\perp}$.
Elements of $S$ evaluate to zero on loops and hence, trivially, are
loop invariants.
\begin{definition}
We call the quotient vector space $[V,\R^d]^{\perp}/S$ the space of 
\DEF{letter-reduced loop invariants}.
\end{definition}

\textbf{Now: we do not know whether there are ``non-trivial'' loop invariants apart from (shuffles of) the area and conjugation invariants.}

\begin{conjecture}\label{conj:loopinvareSplusareaconj}
 We have $\loopInvariants=S+\areaconj$,
 where $\areaconj$ is the shuffle subalgebra of $T(\R^d)$ generated by $(\word{ij}-\word{ji})_{i,j=1\dots d}$ and $\conjInvariants$.
\end{conjecture}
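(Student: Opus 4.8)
The inclusion $S + \areaconj \subseteq \loopInvariants$ is the accessible direction, so I would dispatch it first. Elements of $S$ vanish on signatures of loops and are therefore loop invariants, and $\conjInvariants \subseteq \loopInvariants$ was already observed. For the area generators, note that $[V,\R^d]$ has no component in degrees $1$ or $2$: by \Cref{cor:dimVn} one has $V_0 = \R$ and $V_1 = 0$ (non-letter Lie polynomials have degree $\ge 2$), so $[v,\word{k}]$ is either zero or of degree $\ge 3$. Hence all of $T_1 \oplus T_2$, and in particular each area $\word{ij}-\word{ji} = [\word{i},\word{j}]$, lies in $[V,\R^d]^\perp = \loopInvariants$ by \Cref{thm:vi}. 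Since $\loopInvariants$ is a shuffle subalgebra it contains the shuffle subalgebra generated by $S$, the areas, and $\conjInvariants$; and because $S$ is a shuffle ideal, $S + \areaconj$ \emph{is} exactly that subalgebra. This settles ``$\supseteq$''.

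For the reverse inclusion I would recast the problem on the dual side. By \Cref{thm:vi}, $\loopInvariants = [V,\R^d]^\perp$; since $\operatorname{ad}(\word{i})$ preserves $V = U(\mathfrak{h})$, where $\mathfrak{h} := [\mathfrak{g},\mathfrak{g}]$ and $\mathfrak{g} = \FL(\R^d)$, we have $[V,\R^d]\subseteq V$. Combining this with $S = V^\perp$ from \Cref{lem:VnSn} gives, degree by degree,
\begin{align*}
  \loopInvariants/S \;=\; [V,\R^d]^\perp/V^\perp \;\cong\; \bigl(V/[V,\R^d]\bigr)^*.
\end{align*}
As the letters Lie-generate $\mathfrak{g}$, the span $\sum_i \operatorname{ad}(\word{i})V$ coincides with the full adjoint image $\sum_{x\in\mathfrak{g}}\operatorname{ad}(x)V$, so $V/[V,\R^d]$ is the space of $\mathfrak{g}$-coinvariants of the adjoint module $U(\mathfrak{h})$. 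The identical bookkeeping identifies $\conjInvariants \cong (T/[T,T])^*$ with the dual coinvariants of $U(\mathfrak{g}) = T$ (matching the identification $\conjInvariants = \im \rot$ with necklaces), and the inclusion $\mathfrak{h}\hookrightarrow\mathfrak{g}$ induces precisely the map $\conjInvariants \to \loopInvariants/S$. In these terms the conjecture asserts that the cokernel of this map is spanned, modulo shuffle products with already-known invariants, by the classes of the areas.

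To compute these coinvariants I would exploit the abelian extension $0 \to \mathfrak{h} \to \mathfrak{g} \to \R^d \to 0$ and the induced relative PBW decomposition $T \cong \operatorname{Sym}(\R^d)\otimes V$, using the elementary identity $H_0(\mathfrak{g};M) = H_0(\R^d; H_0(\mathfrak{h};M))$ to rewrite the $\mathfrak{g}$-coinvariants of $U(\mathfrak{h})$ as the $\R^d$-coinvariants of the trace quotient $U(\mathfrak{h})/[U(\mathfrak{h}),U(\mathfrak{h})]$. A complementary route is the cyclic quasisymmetric function calculus of \cite{adin2021cyclic}, from which the $d=4$ relation recorded in \Cref{rem:conj_not_free} is drawn; matching our necklace-indexed conjugation invariants and area generators against that combinatorics should yield explicit shuffle expressions for each coinvariant class. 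The main obstacle is precisely this shuffle-generation step, not a dimension count. Because $\conjInvariants$, and hence $\areaconj$, is \emph{not} free as a commutative algebra (\Cref{rem:conj_not_free}: a relation already at level $12$ for $d=2$, and explicit relations for $d=3,4$), comparing numbers of generators via \Cref{cor:dimVn} cannot close the argument: one must show that \emph{every} coinvariant class is realized by a shuffle polynomial in areas and necklaces while simultaneously accounting for all such relations. Establishing this surjectivity uniformly in $n$ — equivalently, that no exotic loop invariant escapes the area-plus-necklace image — is exactly the point at which the available tools stop, which is why the statement is posed as a conjecture.
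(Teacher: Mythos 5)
You are attempting to prove a statement that the paper itself records only as an open conjecture (\Cref{conj:loopinvareSplusareaconj}); there is no proof in the paper to compare against, and your proposal --- to its credit --- does not pretend to close it. What you do prove, the inclusion $S+\areaconj\subseteq\mathsf{LoopInv}$, is correct and is in fact argued more explicitly than in the paper, which only scatters the three ingredients as remarks ($S$ vanishes on loops; $\mathsf{ConjInv}\subseteq\mathsf{LoopInv}$; the area is a loop invariant). Your degree argument is sound: $V_0=\R\cdot 1$ commutes with letters and $V_1=0$, so $[V,\R^d]$ has no component in degree $\le 2$, whence all of $T_2$ --- in particular every $\word{ij}-\word{ji}$ --- lies in $[V,\R^d]^\perp=\mathsf{LoopInv}$ by \Cref{thm:vi}; and since $S$ is a shuffle ideal, $S+\areaconj$ is indeed the shuffle subalgebra generated by $S$, the areas and $\mathsf{ConjInv}$. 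Your dual reformulation of the hard direction also matches the paper's own bookkeeping: the identification $\mathsf{LoopInv}/S\cong(V/[V,\R^d])^*$ is exactly the paper's computation $\dim([V_{n-1},\R^d]^{\perp}/S_n)=\dim V_n-\dim[V_{n-1},\R^d]$ in homological clothing, resting on the paper's lemma that $[V,\R^d]\subseteq V$ and on \Cref{lem:VnSn}; your claim that $\sum_i\operatorname{ad}(\word{i})V$ equals the full adjoint image follows by the same right-comb/Jacobi induction as \Cref{lem:LieFLA}. One terminological slip: the extension $0\to\mathfrak h\to\mathfrak g\to\R^d\to 0$ has abelian \emph{quotient}, not abelian kernel, so it is not an ``abelian extension''; this is harmless, since you only invoke the two-step coinvariants identity, which holds for any ideal.

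Your diagnosis of where the argument must stop is accurate and consistent with the paper's own evidence. The paper shows the conjugation invariants (hence $\areaconj$) are not free (\Cref{rem:conj_not_free}), so, as you say, no dimension count along the lines of \Cref{cor:dimVn} can establish that the classes of shuffle polynomials in areas and necklaces exhaust $\mathsf{LoopInv}/S$; the paper's own partial results point the same way, e.g.\ the stronger \Cref{conj:loopinvareSplusareaconj2d3d} fails for $d\ge 4$, and \Cref{prop:rclrotarea} shows $(\word{12}-\word{21})\shuffle(\word{34}-\word{43})\notin\im\rcl\circ\rot$, so products of areas in distinct letters contribute genuinely new letter-reduced classes that any surjectivity argument must account for. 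In short: your proposal is a correct proof of the easy containment plus a reasonable, paper-consistent reduction of the open containment to a coinvariant-surjectivity statement, and the gap you name --- realizing every class in $(V/[V,\R^d])^*$ by areas and necklaces uniformly in the level --- is precisely the content of the conjecture, not a repairable defect of your argument.
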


It might help to do dimension counting directly in the sub Hopf algebra of loops $V$ and its graded dual to not be bothered by the trivial loop invariants.

\begin{lemma}
    \begin{align*}
       \text{$[v,\word{i}]\in V$ for all letters $i$ and $v\in V$}
    \end{align*}
\end{lemma}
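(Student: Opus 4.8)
The plan is to exploit the explicit spanning set for $V$ furnished by \Cref{prop:VnLiePoly}: up to the constants $\R$, the space $V$ is linearly spanned by concatenation products $L_1 \cdots L_k$ in which each $L_j$ is a homogeneous non-letter Lie polynomial (i.e.\ $L_j \in \FL_{m_j}(\R^d)$ with $m_j \ge 2$). By linearity it therefore suffices to check that $[1, \word{i}]$ and $[L_1 \cdots L_k, \word{i}]$ lie in $V$ for every such product and every letter $\word{i}$. The first is immediate, since $[1, \word{i}] = 0 \in V$.

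For the generic summand, I would use that $[\,\cdot\,, \word{i}] = -\operatorname{ad}_{\word{i}}$ is a derivation of the associative (concatenation) algebra $T(\R^d)$; indeed the identity $[xy, \word{i}] = x\,[y, \word{i}] + [x, \word{i}]\,y$ is a one-line computation ($xy\word{i} - \word{i}xy$ on both sides). Applying the Leibniz rule repeatedly gives
\begin{align*}
  [L_1 \cdots L_k, \word{i}]
  =
  \sum_{j=1}^{k} L_1 \cdots L_{j-1}\, [L_j, \word{i}]\, L_{j+1} \cdots L_k.
\end{align*}
The key observation is then that each factor in every summand is again a non-letter Lie polynomial. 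The factors $L_\ell$ with $\ell \ne j$ are non-letter Lie polynomials by hypothesis, while $[L_j, \word{i}]$ is a Lie polynomial because $\FL(\R^d)$ is closed under the bracket and $\word{i} \in \FL_1(\R^d)$. Moreover $[L_j, \word{i}]$ is homogeneous of degree $m_j + 1 \ge 3$, hence in particular non-letter. Thus each summand is a concatenation product of non-letter Lie polynomials and so lies in $V$ by \Cref{prop:VnLiePoly}; as $V$ is a linear subspace, the entire sum lies in $V$.

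I do not anticipate a genuine obstacle: once the description of $V$ as (the homogeneous pieces of) the enveloping algebra of the derived free Lie algebra is in hand, the claim reduces to the Leibniz rule together with the elementary fact that bracketing a degree-$\ge 2$ Lie polynomial with a letter yields a degree-$\ge 3$ Lie polynomial. The only point requiring mild care is the homogeneity bookkeeping — ensuring $[L_j, \word{i}]$ never drops into degree $1$ — which is guaranteed precisely because each $L_j$ is non-letter. One could alternatively argue through the dual characterization $V = S^\perp$ of \Cref{lem:VnSn} by showing $\langle [v, \word{i}], \word{j} \shuffle w\rangle = 0$, but this routes through the concatenation/deconcatenation duality and is less direct than the derivation argument above.
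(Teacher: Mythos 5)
Your proof is correct, and amusingly the route you dismiss in your final sentence as ``less direct'' is exactly the one the paper takes. The paper never invokes the structure theory of $V$: working from the dual characterization $V = S^\perp$ of \Cref{lem:VnSn}, it pairs $v\word{i}$ against $w \shuffle \word{j}$ and unfolds via concatenation/deconcatenation duality, $\langle v\word{i}, w\shuffle \word{j}\rangle = \langle v\otimes\word{i}, \Delta_\bullet w \shuffle (\e\otimes\word{j}+\word{j}\otimes\e)\rangle$; the cross term $\langle v, w_1\cdots w_{k-1}\shuffle \word{j}\rangle\langle \word{i},w_k\rangle$ dies because $v \perp S$, leaving $\langle v\word{i}, w\shuffle\word{j}\rangle = \langle v,w\rangle\langle\word{i},\word{j}\rangle = \langle \word{i}v, w\shuffle\word{j}\rangle$, whence $[v,\word{i}]\in S^\perp = V$. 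Your argument instead goes through \Cref{prop:VnLiePoly}: reduce to spanning products $L_1\cdots L_k$ of homogeneous degree-$\ge 2$ Lie polynomials, apply the Leibniz rule for $x\mapsto[x,\word{i}]$, and observe that $[L_j,\word{i}]$ is again a Lie polynomial of degree $m_j+1\ge 3$ — in effect, that the derived subalgebra $[\mathfrak{g},\mathfrak{g}]$ is an ideal, so a derivation of $T(\R^d)$ stabilizing it stabilizes its enveloping algebra. The trade-off: your proof is conceptually transparent but leans on the comparatively heavy \Cref{prop:VnLiePoly} (whose proof needs PBW bases and free shuffle generation), whereas the paper's four-line pairing computation is self-contained given only \Cref{lem:VnSn}, i.e.\ the definition of $V$ as an orthogonal complement. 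One point you handled correctly but should keep explicit, since the paper's phrasing is loose: ``non-letter Lie polynomial'' in \Cref{prop:VnLiePoly} must mean ``vanishing degree-one component'' (otherwise $V_1 = 0$ would fail), and it is exactly this reading that licenses your reduction to homogeneous $L_j$ with $m_j\ge 2$ and guarantees $[L_j,\word{i}]$ never drops to degree one.
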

\begin{proof}
$v\in V$, $i$ a letter,
$w = w_1 \dots w_k$ any word, $j$ a letter.
Then
\begin{align*}
   \langle v\word{i}, w\shuffle \word{j} \rangle 
   &=
   \langle v\otimes \word{i}, \Delta_\bullet w \shuffle \Delta_\bullet \word{j} \rangle \\
   &=
   \langle v\otimes \word{i}, \Delta_\bullet w \shuffle (\e\otimes \word{j} + \word{j} \otimes \e) \rangle \\
   &=
   \langle v \otimes i, w \otimes \word{j} \rangle
   +
   \langle v \otimes i, w_1 \dots w_{k-1} \shuffle j \otimes w_k \rangle \\
   &=
   \langle v \otimes \word{i}, w \otimes \word{j} \rangle.
\end{align*}
Analogously, 
\begin{align*}
    \langle \word{i}v, w\shuffle \word{j} \rangle 
    =
   \langle \word{i} \otimes v, \word{j} \otimes w \rangle,
\end{align*}
and the claim follows.
    
\end{proof}

We can then write the
dimension of the letter-reduced loop invariants 
as
\begin{align*}
 \dim([V_{n-1},\R^d]^{\perp}/S_n)&=\dim([V_{n-1},\R^d]^{\perp})-\dim S_n\\
 &=\dim T_n(\R^d)-\dim[V_{n-1},\R^d]-\dim T_n(\R^d)+\dim S_n^{\perp}\\
 &=\dim S_n^{\perp}-\dim [V_{n-1},\R^d]\\
 &=\dim V_n-\dim[V_{n-1},\R^d] \\
 &=\dim (V_n / \dim[V_{n-1},\R^d] ).
\end{align*}

\textbf{What we need to solve Conjecture \ref{conj:loopinvareSplusareaconj} is the dimension of the intersection of the conjugation invariants with $S$, 
i.e. the dimension of the conjugation invariants which are zero on all loops.}

By
\Cref{prop:conj_invariants}
\begin{align*}
    \conjInvariants = [T(\R^d), \R^d]^\perp.
\end{align*}

\begin{definition}
 We define the space of \DEF{letter-reduced conjugation invariants} as $[T(\R^d),\R^d]^{\perp}/S$. 
\end{definition}
Note that $S$ is \emph{not} a subspace of $[T(\R^d),\R^d]^{\perp}$,
the quotient is to be read as
\begin{align*}
[T(\R^d),\R^d]^{\perp}/S
:=([T(\R^d),\R^d]^{\perp}+S)/S
\cong [T(\R^d),\R^d]^{\perp} / ([T(\R^d),\R^d]^{\perp} \cap S).
\end{align*}

Since $T(\R^d) \supset V$, we have $[T(\R^d),\R^d]\supset [V,\R^d]$, and thus
\begin{align*}
   [T(\R^d),\R^d]^\perp\subset [V,\R^d]^\perp.
\end{align*}

Since $S\subset [V,\R^d]^\perp$,
we have $[T(\R^d),\R^d]^\perp\subset[V,\R^d]^\perp$,
and so finally
\begin{equation*}
  [T(\R^d),\R^d]^{\perp}/S
  =([T(\R^d),\R^d]^{\perp}+S)/S
  \subset [V,\R^d]^\perp/S.
\end{equation*}

Then
\begin{align*}
\dim [T_{n-1}(\R^d),\R^d]^{\perp}/S_n&=\dim ([T_{n-1}(\R^d),\R^d]^{\perp}+S_n)/S_n\\
&=\dim([T_{n-1}(\R^d),\R^d]^{\perp}+S_n)-\dim S_n\\
&=\dim S_n^{\perp}-\dim([T_{n-1}(\R^d),\R^d]^{\perp}+S_n)^{\perp}\\
&=\dim V_n - \dim ([T_{n-1}(\R^d),\R^d]\cap V_n)\\
&=\dim (V_n/[T_{n-1}(\R^d),\R^d])
\end{align*}

Based on Tables \ref{tab:2dloopinv} and \ref{tab:3dloopinv}
we have the following stronger conjecture for two and three dimensions.
\begin{conjecture}
  \label{conj:loopinvareSplusareaconj2d3d}
For $d=2,3$, 
\begin{align*}
    [T(\R^d),\R^d]\cap V=[V,\R^d]\oplus\{\word{ij}-\word{ji}|\word{i},\word{j}=\word{1},\dots,\word{d}\},
\end{align*}
   and consequently $\loopInvariants=S+\conjInvariants+\{\word{ij}-\word{ji}|\word{i},\word{j}=\word{1},\dots,\word{d}\}$.
\end{conjecture}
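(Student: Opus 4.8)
The plan is to reduce the displayed ``consequently'' to the first (subspace) identity, and then to attack that identity, whose core I expect to be genuinely hard --- it is, after all, stated as a conjecture.

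\emph{Reducing the consequence to the subspace identity.} Within each finite-dimensional graded piece $T_n$, equipped with the word inner product, I would use $\loopInvariants_n=[V_{n-1},\R^d]^\perp$, $\conjInvariants_n=[T_{n-1},\R^d]^\perp$ and $S_n=V_n^\perp$ (\Cref{lem:VnSn}). Taking orthogonal complements converts the subspace identity, degree by degree, into a statement purely about loop invariants: for $n\ge 3$ it reads $\loopInvariants_n=\conjInvariants_n+S_n$, since $([T_{n-1},\R^d]\cap V_n)^\perp=\conjInvariants_n+S_n$ while $[V_{n-1},\R^d]^\perp=\loopInvariants_n$. In degree $2$ one has $\loopInvariants_2=T_2=S_2\oplus\operatorname{span}\{\word{ij}-\word{ji}\}$ with $\conjInvariants_2=S_2$, which is precisely where the extra area summand enters; degrees $\le 1$ are trivial. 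Reassembling the graded pieces yields the ``consequently''. Hence all the content sits in the subspace identity.

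\emph{Reformulating the subspace identity.} The inclusion ``$\supseteq$'' is the lemma immediately preceding the conjecture (together with $[\R^d,\R^d]=V_2$ in degree $2$), so only ``$\subseteq$'' remains, for $n\ge 3$. Here I would record two facts. First, since $\conjInvariants_n=\ker(\id-\shift_n)$ (by \Cref{prop:conjugationShift2} and the theorem $\conjInvariants=\im\rot$) and $\shift_n$ is orthogonal, $[T_{n-1},\R^d]=\conjInvariants_n^\perp=\im(\id-\shift_n)$; moreover $[w,\word i]=w\word i-\shift_n(w\word i)$, so $[V_{n-1},\R^d]=(\id-\shift_n)(V_{n-1}\cdot\R^d)$, where $V_{n-1}\cdot\R^d:=\operatorname{span}\{v\word i\}$ (concatenation, not a priori inside $V_n$). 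Second, $V=\bigcap_i\ker\partial_{\word i}$ is the enveloping algebra of the derived free Lie algebra $[\FL,\FL]$ (\Cref{cor:dimVn}). The claim then says: an element of $V_n$ lying in $\im(\id-\shift_n)$ already lies in $(\id-\shift_n)(V_{n-1}\cdot\R^d)$.

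\emph{Closing by dimension count.} Because ``$\supseteq$'' is known, it suffices to match dimensions. Using the two formulas derived just above the conjecture, the identity in degree $n\ge 3$ is equivalent to the equality of the dimensions of the letter-reduced loop and conjugation invariants, $\dim V_n-\dim[V_{n-1},\R^d]=\dim\conjInvariants_n-\dim(\conjInvariants_n\cap S_n)$, the $n=2$ discrepancy being exactly $\binom d2$. I would compute both sides through the cyclic group $C_n$ acting on $T_n$ by $\shift_n$: the necklace space $\conjInvariants_n$ and the complementary isotypic part $\im(\id-\shift_n)$ are governed by the Klyachko idempotent and cyclic sieving, while the Poincar\'e--Birkhoff--Witt decomposition $T_n\cong\bigoplus_{a+b=n}V_a\otimes\operatorname{Sym}^b(\R^d)$ controls $\dim V_n$ and the bracket map $V_{n-1}\otimes\R^d\to V_n$. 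The one genuinely unknown quantity is $\dim(\conjInvariants_n\cap S_n)$, the number of necklaces lying in the shuffle ideal generated by the letters (equivalently, conjugation invariants vanishing on all loops), and the goal would be a closed generating function for it when $d=2,3$.

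\emph{The main obstacle.} The decisive difficulty is that the identity is \emph{false} for $d\ge 4$: the degree-$6$ relation and the cyclic-quasisymmetric relation from \cite{adin2021cyclic} show that $\conjInvariants$ is not free and that loop invariants beyond $S+\conjInvariants+\operatorname{span}\{\word{ij}-\word{ji}\}$ appear. Any proof must therefore use $d\le 3$ essentially, and the hard step is pinning down $\dim(\conjInvariants_n\cap S_n)$ --- equivalently the cokernel of the bracket map $V_{n-1}\otimes\R^d\to V_n$ --- in closed form. This quantity does not obviously stabilize or factor, so isolating the coincidences that make $d\le 3$ special, presumably via cyclic quasisymmetric functions, is where the real effort lies; absent a uniform formula, the fallback is to verify the $C_n$-equivariant dimension identity level by level (as in \Cref{tab:2dloopinv,tab:3dloopinv}) and prove the resulting pattern by induction on $n$.
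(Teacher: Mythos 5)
This statement is a \emph{conjecture} in the paper: there is no proof to compare against, only the numerical evidence of Tables \ref{tab:2dloopinv} and \ref{tab:3dloopinv}. Your reductions are correct and essentially reproduce the paper's own framing. The orthogonality bookkeeping you carry out (using \Cref{thm:vi}, \Cref{prop:conj_invariants} and \Cref{lem:VnSn}) is exactly the pair of dimension computations displayed just before the conjecture, which identify $\dim(V_n/[V_{n-1},\R^d])$ with the letter-reduced loop invariants and $\dim(V_n/([T_{n-1},\R^d]\cap V_n))$ with the letter-reduced conjugation invariants; your degree-$2$ analysis of the area summand and the inclusion ``$\supseteq$'' via the lemma $[V,\R^d]\subset V$ are likewise sound, as is the identification $\conjInvariants_n=\ker(\id-\shift_n)$ via \Cref{prop:conjugationShift2}. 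So you have correctly shown that the conjecture is equivalent, level by level for $n\ge 3$, to the identity $\dim V_n-\dim[V_{n-1},\R^d]=\dim\conjInvariants_n-\dim(\conjInvariants_n\cap S_n)$ --- which is precisely the coincidence of the last two columns of the tables.

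The genuine gap is that this identity is never proved, and you say so yourself: you gesture at the Klyachko idempotent, cyclic sieving and the PBW decomposition, but give no mechanism that computes $\dim(\conjInvariants_n\cap S_n)$ (equivalently the cokernel of the bracket map $V_{n-1}\otimes\R^d\to V_n$) or that exploits $d\le 3$ in an essential way; and the fallback of checking finitely many levels against the tables and then ``proving the resulting pattern by induction on $n$'' is vacuous without an inductive step, since the tables reach only level $11$ (resp.\ $9$) and an infinite family of dimension identities cannot be closed by finite verification. One smaller inaccuracy: the failure for $d\ge 4$ is not witnessed by the non-freeness relations (the degree-$6$ relation for $d=3$ and the relation from \cite{adin2021cyclic}), which concern a different phenomenon; the paper's witness is the level-$4$ discrepancy $20\ne 21$ in Table \ref{tab:4dloopinv}, consistent with the fact that $(\word{12}-\word{21})\shuffle(\word{34}-\word{43})\notin\im\rcl\circ\rot$ (\Cref{prop:rclrotarea}), i.e.\ a new loop invariant built from areas in four distinct letters. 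In sum: your proposal is a correct and faithful restatement of what the paper itself establishes around the conjecture, but it does not advance beyond the paper, and the statement remains open.
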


Table \ref{tab:4dloopinv} shows that this does not hold for $d\geq 4$.

\begin{table}\label{tab:2dloopinv}
\centering
\begin{tabular}{c||cccccc}
\hline
Level&
\shortstack{conjugation\\invariants\\$[T_{n-1},\R^2]^\bot$\\\oeis{A000031}}&\shortstack{minimal $\shuffle$-gens\\of conj. inv.} & $V_n$ & $[V_{n-1},\mathbb R^2]$ & \shortstack{$\im\rcl\circ\rot$\\(letter-reduced\\conj. inv.)} & \shortstack{letter-reduced\\loop inv.}\\
\hline
1&2&2 & 0 & 0 & 0 & 0\\
2&3 & 0 & 1 & 0 & 0 & 1\\
3&4 & 0 & 2 & 2 & 0 & 0\\
4&6 & 1 & 4 & 3 & 1 & 1\\
5&8 &0 & 8 & 8 & 0 & 0\\
6&14&4 & 16 & 12 & 4 & 4\\
7&20 & 0 & 32 & 32 &0 & 0\\
8&36 & 9 & 64 & 54 &10 & 10\\
9&60 & 8 & 128 & 120 &8 & 8\\
10 & 108 & 20 & 256 & 232 & 24 & 24\\
11 & 188 & 32 & 512 & 480 & 32 & 32\\
12 & 352 & 68 & 1024 & 940 &  &\\
13 & 632 &   & 2048 & 1932 &  &\\
\hline
\end{tabular}
\caption{Sizes of various sets for two-dimensional paths, ignoring level 0. $\rcl$ is the right closure operator.
The conjugation invariants are not free
(otherwise the number of minimial generators
on level $12$ would be equal to $64$).
}
\end{table}

\begin{table}\label{tab:3dloopinv}
\centering
\begin{tabular}{c||cccccc}
\hline
Level&
\shortstack{conjugation\\invariants\\$[T_{n-1},\R^3]^\bot$\\\oeis{A001867}}&\shortstack{minimal $\shuffle$-gens\\of conj. inv.} & \shortstack{$V_n$\\\oeis{A118264}}& $[V_{n-1},\mathbb R^3]$ & \shortstack{$\im\rcl\circ\rot$\\(letter-reduced\\conj. inv.)} & \shortstack{letter-reduced\\loop inv.}\\
\hline
1&3&3 & 0 & 0 & 0 & 0\\
2&6 & 0 & 3 & 0 & 0 & 3\\
3&11 & 1 & 8 & 8 & 0 & 0\\
4&24 & 6 & 24 & 18 & 6 & 6\\
5&51 & 6 & 72 & 66 & 6 & 6\\
6&130 & 38 & 216 & 178 & 38 & 38\\
7& 315 & 54 & 648 & 594 & 54 & 54\\
8& 834 & & 1944 & 1716 & 228 & 228\\
9& 2195 & & 5832 & 5324 & 508 & 508\\
10& 5934 & & 17496 & 15960 &\\
\hline
\end{tabular}
\caption{Sizes of various sets for three-dimensional paths, ignoring level 0. $\rcl$ is the right closure operator.}
\end{table}

\begin{table}\label{tab:4dloopinv}
\centering
\begin{tabular}{c||cccccc}
\hline
Level&
\shortstack{conjugation\\invariants\\$[T_{n-1},\R^4]^\bot$\\\oeis{A001868}}&\shortstack{minimal $\shuffle$-gens\\of conj. inv.} & \shortstack{$V_n$\\\oeis{A118265}} & $[V_{n-1},\mathbb R^4]$ & \shortstack{$\im\rcl\circ\rot$\\(letter-reduced\\conj. inv.)} & \shortstack{letter-reduced\\loop inv.}\\
\hline
1&\hphantom{0}4& 4& 0 & 0 & 0 & 0\\
2&\hphantom{0}10& 0& 6 & 0 & 0 & 6\\
3&\hphantom{0}24& 4& 20 & 20 & 0 & 0\\
4&\hphantom{0}70 &  20& 81 & 60 & 20 & 21\\
5&\hphantom{0}208& 36& 324 & 288 & 36 & 36\\
6&\hphantom{0}700 & & 1296 & 1094 & 202 & 202\\
7& 2344 & & 5184 & 4648 & 536 & 536\\
8& 8230 & & 20736 & 18444 & 2292 & 2292\\
9& 29144 & & 82944 &  & & \\
10& 104968 & & 331776 & & & \\
\hline
\end{tabular}
\caption{Sizes of various sets for four-dimensional paths, ignoring level 0. $\rcl$ is the right closure operator.
The algebra is not free (otherwise the number of minimial generators
on level $4$ would be equal to $19$).
}
\end{table}

\begin{table}\
\centering
\begin{tabular}{c||cccccc}
\hline
Level&
\shortstack{conjugation\\invariants\\$[T_{n-1},\R^5]^\bot$\\\oeis{A001869}}&\shortstack{minimal $\shuffle$-gens\\of conj. inv.} & \shortstack{$V_n$\\\oeis{A118266}} & $[V_{n-1},\mathbb R^5]$ & \shortstack{$\im\rcl\circ\rot$\\(letter-reduced\\conj. inv.)} & \shortstack{letter-reduced\\loop inv.}\\
\hline
1&5& 5 & 0 & 0 & 0 & 0\\
2&15& 0 & 10 & 0 & 0 & 10\\
3&45& 10 & 40 & 40 & 0 & 0\\
4&165 & 50  & 205 & 150 & 50 & 55\\
5&629& 127 & 1024 & 898 & 126 & 126\\
6&2635 & & 5120 & 4360 & 760 & 760\\
7& 11165 & & 25600 & 22760 & 2840 & 2840\\
8& 48915 & & 128000 & 114070 & 13930 & 13930\\
9& 217045 & & 640000 &  &\\
10& 976887 & & 3200000 & &\\
\hline
\end{tabular}
\caption{Sizes of various sets for five-dimensional paths, ignoring level 0. $\rcl$ is the right closure operator. 
(The number of minimal generators of a free algebra would be $45$
on level $4$.)}
\end{table}

\begin{table}
\centering
\begin{tabular}{cllcccc}
\hline
Level&
\shortstack{conjugation\\invariants\\$[T_{n-1},\R^6]^\bot$\\\href{https://oeis.org/A054625}{\nolinkurl{A054625}}}&\shortstack{minimal $\shuffle$-gens\\of conj. inv.} & $V_n$ & $[V_{n-1},\mathbb R^6]$ & \shortstack{$\im\rcl\circ\rot$\\(letter-reduced\\conj. inv.)} & \shortstack{letter-reduced\\loop inv.}\\
\hline
1&6& 6& 0 & 0 & 0 & 0\\
2&21& 0& 15 & 0 & 0 & 15\\
3&76& 20& 70 & 70 & 0 & 0\\
4&336 & 105  & 435 & 315 & 105 & 120\\
5&1560& & 2604 & 2268 & 336 & 336\\
6&7826 & & 15625 & 13356 & 2268 & 2269\\
7& 39996 & & 93750 & 83010 & 10740 & 10740\\
8& 210126 & & 562500 &  & \\
9& 1119796 & &  &  &\\
10& 6047412 & &  & &\\
\hline
\end{tabular}
\caption{Sizes of various sets for six-dimensional paths, ignoring level 0. $\rcl$ is the right closure operator. First column is \url{https://oeis.org/A054625}.
(The number of minimal generators of a free algebra would be $90$
on level $4$.)}
\end{table}

\begin{conjecture}
  \label{conj:shuffle_of_letters}
In $d=2$, no homogeneous generator of the conjugation invariants for level $n\geq 2$ is a shuffle with letters, i.e.\ for $n\geq 2$, generators of the conjugations invariants are generators of the letter-reduced conjugation invariants. Or in other words, any conjugation invariant that is a shuffle with letters is a shuffle of conjugation invariants with letters.
\end{conjecture}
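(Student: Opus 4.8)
The plan is to recast the statement as an equality of shuffle ideals and then reduce it to a single homological vanishing. Let $J := \word{1}\shuffle\conjInvariants+\word{2}\shuffle\conjInvariants$ be the shuffle ideal of $\conjInvariants$ generated by the two letters. Because single letters are conjugation invariants and the shuffle of conjugation invariants is one, every element of $J$ is a conjugation invariant lying in $S$; thus $J\subseteq\conjInvariants\cap S$ automatically, and the content of the conjecture is exactly the reverse inclusion $\conjInvariants\cap S\subseteq J$. This is the ``in other words'' formulation in the statement.

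First I would prove the following sharpening of \Cref{prop:rot}, valid in every dimension: for a single letter $\word{i}$ and any $e$,
\begin{align}
\label{eq:letter-reflect}
  \word{i}\shuffle e\in\conjInvariants
  \quad\Longleftrightarrow\quad
  e\in\conjInvariants .
\end{align}
Dualize, using that $\partial_{\word{i}}$ is adjoint to $\word{i}\shuffle(\cdot)$, i.e.\ $\langle\word{i}\shuffle e,u\rangle=\langle e,\partial_{\word{i}}u\rangle$, and that $\partial_{\word{i}}$ is a derivation of the concatenation product with $\partial_{\word{i}}\word{j}=\delta_{ij}\e$. A short computation gives $\partial_{\word{i}}[w,\word{j}]=[\partial_{\word{i}}w,\word{j}]$ (the two $\delta_{ij}$-terms cancel), so with $\conjInvariants=[T(\R^d),\R^d]^{\perp}$,
\begin{align*}
  \langle\word{i}\shuffle e,[w,\word{j}]\rangle
  =\langle e,[\partial_{\word{i}}w,\word{j}]\rangle .
\end{align*}
As $w$ ranges over $T_{n-1}(\R^d)$, $\partial_{\word{i}}w$ ranges over all of $T_{n-2}(\R^d)$, so the right-hand side vanishes for all $w,\word{j}$ precisely when $e\perp[T_{n-2}(\R^d),\R^d]$, i.e.\ $e\in\conjInvariants$; note the condition is independent of $i$. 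Now fix $d=2$. Since $(T(\R^2),\shuffle)$ is the polynomial algebra on the Lyndon words, $\word{1},\word{2}$ form a regular sequence, so any relation $\word{1}\shuffle c_1+\word{2}\shuffle c_2=0$ with $c_1,c_2\in\conjInvariants$ is Koszul, $c_1=\word{2}\shuffle e$, $c_2=-\word{1}\shuffle e$, and \eqref{eq:letter-reflect} forces $e\in\conjInvariants$. Hence $\conjInvariants_{n-2}\to\conjInvariants_{n-1}^{\,2}\to\conjInvariants_n$ is exact in the middle and
\begin{align*}
  \dim J_n=2\dim\conjInvariants_{n-1}-\dim\conjInvariants_{n-2}.
\end{align*}
Together with $J\subseteq\conjInvariants\cap S$ this reduces the conjecture to the numerical identity $\dim(\conjInvariants_n\cap S_n)=2\dim\conjInvariants_{n-1}-\dim\conjInvariants_{n-2}$; equivalently, with $C(q)=\sum_{n\ge0}\dim\conjInvariants_n\,q^n$, the letter-reduced conjugation invariants should have Hilbert series $(1-q)^2C(q)$ (matching the relevant columns of \Cref{tab:2dloopinv}).

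For the remaining identity I would use a Koszul/Poincar\'e argument. The commuting derivations $\partial_{\word{1}},\partial_{\word{2}}$ satisfy $[\partial_{\word{i}},\word{j}\shuffle(\cdot)]=\delta_{ij}\,\id$, a Weyl-algebra relation; consequently $\ker(\partial_{\word{1}},\partial_{\word{2}})=V$ and the three-term complex $T_n\to T_{n-1}^2\to T_{n-2}$, with maps $(\partial_{\word1},\partial_{\word2})$ and $(\alpha,\beta)\mapsto\partial_{\word2}\alpha-\partial_{\word1}\beta$, is exact in the middle (polynomial Poincar\'e lemma). By the bracket formula above, $\partial_{\word1},\partial_{\word2}$ preserve $[T(\R^2),\R^2]$, so they act on the analogous subcomplex; the long exact sequence then shows that $\conjInvariants\cap S=J$ is equivalent to the vanishing of the middle homology of this subcomplex. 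Its bottom homology already vanishes (because $\partial_{\word2}$ alone is onto $[T(\R^2),\R^2]$), so an Euler-characteristic count turns the vanishing into
\begin{align*}
  \dim\big([T_{n-1}(\R^2),\R^2]\cap V_n\big)
  =2^{\,n-2}-\big(\dim\conjInvariants_n-2\dim\conjInvariants_{n-1}+\dim\conjInvariants_{n-2}\big).
\end{align*}

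The main obstacle is precisely this last equality, which demands an exact description of $[T_{n-1}(\R^2),\R^2]\cap V_n$. This is the content of \Cref{conj:loopinvareSplusareaconj2d3d} for $d=2$, namely $[T(\R^2),\R^2]\cap V=[V,\R^2]\oplus\R(\word{12}-\word{21})$; so I expect the two conjectures in dimension two to stand or fall together, the hard kernel being control of $[T(\R^2),\R^2]\cap V$ (equivalently, of the middle Koszul homology of $[T(\R^2),\R^2]$ under $\partial_{\word1},\partial_{\word2}$). That this homology is genuinely nonzero for $d\ge4$ (consistent with \Cref{tab:4dloopinv}) is exactly why the statement is confined to $d=2$: any proof must exploit a feature special to two letters, most plausibly the explicitness of the Poincar\'e lemma for a closed $1$-form in two variables together with the description of $V$ as the enveloping algebra of the derived free Lie algebra from \Cref{cor:dimVn}.
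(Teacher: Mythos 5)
You have attempted to prove a statement that the paper itself does not prove: it is \Cref{conj:shuffle_of_letters}, stated as a conjecture and listed again as open in the Outlook, so there is no proof in the paper to compare against, and the only question is whether your argument closes it. It does not, as you yourself concede: everything funnels into the dimension identity $\dim(\conjInvariants_n\cap S_n)=2\dim\conjInvariants_{n-1}-\dim\conjInvariants_{n-2}$, which remains unproved. That said, much of your reduction is sound and genuinely useful. The adjunction $\langle\word{i}\shuffle e,u\rangle=\langle e,\partial_{\word{i}}u\rangle$ is correct (it is the duality between shuffle multiplication and the deshuffle coproduct), the computation $\partial_{\word{i}}[w,\word{j}]=[\partial_{\word{i}}w,\word{j}]$ is correct since $\partial_{\word{i}}$ is a concatenation derivation and $[w,\e]=0$, surjectivity of $\partial_{\word{i}}\colon T_{n-1}\to T_{n-2}$ follows from injectivity of shuffle multiplication by $\word{i}$ in the integral domain $(T(\R^d),\shuffle)$, and hence your lemma $\word{i}\shuffle e\in\conjInvariants\Leftrightarrow e\in\conjInvariants$ holds in all dimensions. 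Likewise the Koszul-syzygy step, using that $\word{1},\word{2}$ are among the free polynomial generators of $(T(\R^2),\shuffle)$, correctly yields $\dim J_n=2\dim\conjInvariants_{n-1}-\dim\conjInvariants_{n-2}$ and reduces the conjecture to the dimension identity above.

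Two genuine defects remain. First, the claimed Weyl-algebra relation $[\partial_{\word{i}},\word{j}\shuffle(\cdot)]=\delta_{ij}\id$ is false, because $\partial_{\word{i}}$ is \emph{not} a shuffle derivation: for instance
\begin{align*}
  \partial_{\word{1}}(\word{1}\shuffle\word{2})=\partial_{\word{1}}(\word{12}+\word{21})=2\,\word{2},
  \qquad \word{1}\shuffle\partial_{\word{1}}\word{2}=0,
\end{align*}
so the commutator applied to $\word{2}$ gives $2\,\word{2}$, not $\word{2}$. The middle exactness of $T_n\to T_{n-1}^2\to T_{n-2}$ that you wanted is nevertheless true, but for a different reason: this complex is the linear dual of the Koszul complex of the regular sequence $\word{1},\word{2}$, and a three-term complex of finite-dimensional spaces has the same middle homology as its dual; your ``polynomial Poincar\'e lemma'' justification should be replaced by this duality. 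Second, and more seriously, the homological second half is circular: since the paper already establishes $\dim(\text{letter-reduced conj.\ inv.})_n=\dim V_n-\dim([T_{n-1}(\R^2),\R^2]\cap V_n)$ with $\dim V_n=2^{n-2}$ for $n\ge2$, your final displayed equation is literally a restatement of the same unproven identity from your first paragraph — the Euler-characteristic machinery buys no new information. Moreover, \Cref{conj:loopinvareSplusareaconj2d3d} would give $\dim([T_{n-1}(\R^2),\R^2]\cap V_n)=\dim[V_{n-1},\R^2]+\delta_{n,2}$, which is a \emph{different} identity; you have not shown the two conjectures are equivalent, so ``stand or fall together'' is an overstatement — you have only shown both hinge on controlling the space $[T(\R^2),\R^2]\cap V$. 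The missing idea, namely an actual computation of that intersection (equivalently of $\conjInvariants\cap S$), is exactly the open content of the conjecture.
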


In $d=3$, this is not true because of signed volume.
Indeed,
$\rot\word{123}-\rot\word{132}=\word{1}\shuffle(\word{23}-\word{32})-\word{2}\shuffle(\word{13}-\word{31})+\word{3}\shuffle(\word{12}-\word{21})$.
However, we conjecture that the signed volume is the only exception in $d=3$.

\section{Closure invariants}
\label{sec:closure_invariants}
Right-closing a path means completing it into a closed path by adding a straight line from its end point to its start point.
We consider signature elements which do not change when you do this.

\begin{definition}
 An element $\tau\in T(\R^d)$ is called a \DEF{right-closure invariant} if
 \begin{equation*}
  \langle\sig(X),\tau\rangle=\langle\sig(X\sqcup R_X),\tau\rangle
 \end{equation*}
 for all paths $X$ and $R_X$ the linear path which has the
 same increment as the reversed path $\backwards{X}$.

 It is a \DEF{left-closure invariant}
 if
 \begin{equation*}
  \langle\sig(X),\tau\rangle=\langle\sig(R_X \sqcup X ),\tau\rangle
 \end{equation*}
 for all paths $X$.
 
\end{definition}

\begin{proposition}
    \label{prop:equivalent_closure}
    
    For an element $\EL \in T(\R^d)$ following are equivalent:
    \begin{enumerate}
        \item $\phi$ is right-closure invariant.
        \item
        For all $g$ grouplike
        with zero increment and any $z \in \R^d$,
        \begin{align}\label{eq:rightclosureinv}
            \langle g \exp(z), \EL \rangle
            =
            \langle g, \EL \rangle.
        \end{align}
        
        \item
    For all $v\in V$, all $z\in \R^d$ and for all $k\geq 1$
    \begin{align*}
       \langle v z^k , \EL \rangle = 0.
    \end{align*}
\end{enumerate}
\end{proposition}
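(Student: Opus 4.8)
I want to prove the three-way equivalence in Proposition~\ref{prop:equivalent_closure}, which characterizes right-closure invariants. The strategy is to close a cycle of implications, roughly $1 \Leftrightarrow 2$ directly from the definition via Chen's identity, and then $2 \Leftrightarrow 3$ by a derivation/linearization argument that mirrors the passage from grouplike statements to Lie-bracket statements already used in Proposition~\ref{prop:conj_invariants} and Theorem~\ref{thm:vi}.

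For $1 \Leftrightarrow 2$, the key observation is that the signature of the closing segment $R_X$ is exactly $\exp(z)$, where $z \in \R^d$ is the increment of the reversed path $\backwards{X}$, i.e.\ $z = -\pi_1\sig(X)$, since a straight line has a purely exponential signature $\exp(\text{increment})$. By Chen's identity, $\sig(X \sqcup R_X) = \sig(X)\exp(z)$. Writing $g$ for a grouplike element with the \emph{same increment removed}, the natural move is to decompose an arbitrary grouplike $\sig(X)$ as $g\exp(w)$ with $w = \pi_1\sig(X)$ and $g$ grouplike with zero increment; then $\sig(X)\exp(z) = g\exp(w)\exp(z) = g\exp(w-w) = g$ (using $z=-w$ and that $\exp$ is a one-parameter group in a single direction). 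So right-closing amounts precisely to replacing $g\exp(w)$ by $g$. Since by the Chow/Chen theorem all grouplikes of the form $g\exp(z)$ are realizable as signatures of paths, and every path signature is of this form, statement~1 and statement~2 are the same condition. The main care here is the truncation argument (as in Proposition~\ref{prop:conj_invariants}, ``$1\Rightarrow 2$''): since $\EL \in T_{\le n}$, it suffices to match signatures up to level $n$, which Chow/Chen guarantees.

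For $2 \Leftrightarrow 3$, I linearize. Given $g$ grouplike with zero increment and $z \in \R^d$, consider $f(t) := \langle g\exp(tz), \EL\rangle$. Condition~2 says $f(1) = f(0)$ for all such $g,z$; equivalently (replacing $z$ by $tz$ and using that $tz$ ranges over all of $\R^d$) that $f$ is constant, hence $f'(t)=0$ for all $t$, hence in particular $f^{(k)}(0)=0$ for all $k\ge 1$. Differentiating the exponential, $\frac{d^k}{dt^k}\big|_{t=0}\exp(tz) = z^k = z^{\bullet k}$, so $f^{(k)}(0) = \langle g z^k, \EL\rangle$. Since the $\pi_n g$ span $V$ by definition of $V$, this yields $\langle v z^k, \EL\rangle = 0$ for all $v \in V$, $z\in\R^d$, $k\ge 1$, which is statement~3. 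Conversely, if statement~3 holds, then every derivative $f^{(k)}(0)$ vanishes for the analytic (in fact polynomial, level by level) function $f$, so $f$ is constant in $t$ on each graded piece, giving $f(1)=f(0)$, i.e.\ statement~2. The replacement of $z$ by $tz$ is legitimate because $V$ is preserved and $\exp(tz)$ is again the signature-type object with zero-increment grouplike times a one-dimensional exponential.

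\textbf{The main obstacle.} I expect the delicate point to be the bookkeeping in $1\Leftrightarrow 2$: one must argue that \emph{every} path signature decomposes as (zero-increment grouplike)\,$\bullet\exp(\text{increment})$ and that right-closure sends this to the zero-increment factor, \emph{and} that conversely every pair $(g,z)$ with $g$ a zero-increment grouplike arises from some path, so that testing on all such pairs is equivalent to testing on all paths. Both directions lean on the Chow/Chen realization theorem together with the level-$\le n$ truncation, exactly as in the proof of Proposition~\ref{prop:conj_invariants}; the group-theoretic identity $\sig(X)\exp(z)=g$ must be verified carefully because $\exp(w)$ and $g$ need not commute in general, so one should check that the increment sits in the abelianization and that multiplying by $\exp(-w)$ on the right genuinely cancels the level-one part while interacting correctly with higher levels through Chen's identity. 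Once this decomposition is pinned down, the analytic linearization in $2\Leftrightarrow 3$ is routine.
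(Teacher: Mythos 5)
Your proposal is correct and takes essentially the same approach as the paper: it proves $1\Leftrightarrow 2$ via Chen's identity, Chow/Chen realizability with truncation to $T_{\le n}$, and the factorization $\sig(X)=\sig(X\sqcup R_X)\exp(w)$ of any path signature into a zero-increment grouplike times the exponential of its increment, and it proves $2\Leftrightarrow 3$ by noting that $t\mapsto\langle g\exp(tz),\phi\rangle$ is polynomial with derivatives $\langle g z^{k},\phi\rangle$ at $t=0$, then passing to all of $V$ by the spanning property underlying \Cref{lem:VnSn}. The only cosmetic difference is that the paper derives $1\Rightarrow 2$ by applying closure invariance to $O\sqcup L$ for a loop $O$ and a linear segment $L$ (using $R_{O\sqcup L}=\backwards{L}$), whereas you realize $g\exp(z)$ directly by a path and close it --- the same argument in a different order.
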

\begin{proof}

   Let $\EL$ be a right closure invariant.
   Then, for any loop $O$ and any linear path $L$,
   \begin{equation*}
    \langle\sig(O\sqcup L),\EL\rangle=\langle\sig(O\sqcup L\sqcup \backwards{L}),\EL\rangle=\langle\sig(O),\EL\rangle
    \end{equation*}
    as $R_{O\sqcup L}=\backwards{L}$.
    Thus $\langle\sig(O)\exp(z),\EL\rangle=\langle\sig(O),\EL\rangle$ for arbitrary $z\in\R^d$,
    and due to Chen-Chow,
    this generalizes from $\sig(O)$ to any grouplike $g$ with zero increment.

   Let $\EL$ be such that it satisfies equation \eqref{eq:rightclosureinv},
   and $X$ be an arbitrary path.
   Then there is $a\in\R^d$ such that $\sig(R_X)=\exp(a)$,
   and so
   \begin{equation*}
    \langle\sig(X),\EL\rangle
    =\langle\sig(X\sqcup R_X\sqcup\backwards{R_X}),\EL\rangle
    =\langle\sig(X\sqcup R_X)\exp(-a),\EL\rangle
    =\langle\sig(X\sqcup R_X),\EL\rangle,
   \end{equation*}
   as $\sig(X\sqcup R_X)$ is grouplike with zero increment.

   \bigskip
    
Let $g$ be grouplike with zero increment, $z \in T(\R^d)$.
Now
$t\mapsto \langle g\exp(tz), \EL\rangle$ is a polynomial in $t$, and thus constant if and only if all derivatives in $t=0$ are zero.
Then, using 1. and 2.,
$\EL$ is a right-closure invariant if and only if
\begin{align*}
       \langle g z^{\itensor n} , \EL \rangle = 0,
\end{align*}
for all grouplike $g$ with zero increment, for all $n\geq 1$
and all linear combinations of letters $z$.
Using the linearization of Lemma \ref{lem:VnSn},
the claim follows.
\end{proof}

\begin{proposition}
    $\EL$ is both a left- and a right-closure invariant if and only if it is both a right-closure invariant and a loop invariant.
    We call these the \DEF{loop-and-closure invariants}.
\end{proposition}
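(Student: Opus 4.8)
The plan is to translate each of the three ingredients --- left-closure, right-closure, and loop invariance --- into a statement about grouplike elements, and then to observe that a single conjugation identity relates them once right-closure invariance is assumed on both sides. By \Cref{prop:equivalent_closure}, right-closure invariance of $\EL$ is equivalent to $\langle g\exp(z),\EL\rangle=\langle g,\EL\rangle$ for every grouplike $g$ with zero increment and every $z\in\R^d$; by the mirror of that argument (closing on the left instead of the right), left-closure invariance is equivalent to $\langle \exp(z)g,\EL\rangle=\langle g,\EL\rangle$ over the same range. Finally, specializing \Cref{prop:loop_invariants}.ii to $h=\exp(z)$ shows that loop invariance implies $\langle \exp(z)g\exp(-z),\EL\rangle=\langle g,\EL\rangle$, while the Chen--Chow-plus-linearization reduction already carried out for \Cref{thm:vi} (differentiating this relation in $z$ at $0$ yields $\langle[z,v],\EL\rangle=0$ for all $v\in V$ and letters $z$) gives the converse; so loop invariance is equivalent to that conjugation relation.

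First I would set $g_0:=\exp(z)\,g\,\exp(-z)$, which is again grouplike with zero increment because $g$ is and $\exp(z)$ is grouplike, and record the identity $\exp(z)\,g=g_0\,\exp(z)$. Applying the right-closure relation to $g_0$ then gives
\[
  \langle \exp(z)g,\EL\rangle=\langle g_0\exp(z),\EL\rangle=\langle g_0,\EL\rangle .
\]
Hence, under the standing assumption of right-closure invariance, the left-closure relation $\langle\exp(z)g,\EL\rangle=\langle g,\EL\rangle$ holds for a given pair $(g,z)$ exactly when $\langle g_0,\EL\rangle=\langle g,\EL\rangle$, that is, exactly when the loop (conjugation-by-$\exp(z)$) relation holds for $(g,z)$. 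Quantifying over all $z\in\R^d$ and all grouplike $g$ with zero increment then yields both implications simultaneously: right-closure together with left-closure is equivalent to right-closure together with loop invariance.

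The main obstacle is not the central computation, which is the one-line identity above, but rather the two auxiliary reformulations. Establishing the left-closure analogue of \Cref{prop:equivalent_closure} is a routine mirror of its proof. More delicate is the equivalence between conjugation by arbitrary grouplikes (as in \Cref{prop:loop_invariants}.ii) and conjugation merely by linear paths $\exp(z)$; this is precisely the content of the Chen--Chow and linearization step already performed for \Cref{thm:vi}, on which I would rely rather than repeat it.
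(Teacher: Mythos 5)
Your proof is correct, and it takes a genuinely different route from the paper's. The paper argues at the level of paths: for one direction it chains right- and left-closure invariance to get $\langle\sig(O),u\rangle=\langle\sig(O\sqcup L),u\rangle=\langle\sig(\backwards{L}\sqcup O\sqcup L),u\rangle$ for every loop $O$ and linear $L$, then iterates to get invariance under conjugation by arbitrary piecewise linear paths and invokes Chen--Chow; for the other direction it combines loop invariance with right-closure invariance to prepend an arbitrary linear path. You instead work entirely with grouplike elements: after translating all three properties into statements about zero-increment grouplikes $g$ and exponentials $\exp(z)$, $z\in\R^d$ (via \Cref{prop:equivalent_closure}, its left-handed mirror, and the reduction to linear conjugators contained in the proof of \Cref{thm:vi}), the single identity $\exp(z)g=g_0\exp(z)$ with $g_0:=\exp(z)g\exp(-z)$ delivers both implications at once, since under right-closure invariance the left-closure relation for the pair $(g,z)$ becomes verbatim the conjugation relation for $(g,z)$. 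The underlying mechanism is the same --- the paper's chain above is your pivot identity read at path level with $h=\exp(z)$ --- but your version buys a uniform, iteration-free two-way argument, at the price of two pieces of infrastructure the paper's proof does not need: the left analogue of \Cref{prop:equivalent_closure} (indeed routine: for arbitrary $X$ with $\sig(R_X)=\exp(a)$, the element $g:=\sig(R_X\sqcup X)=\exp(a)\sig(X)$ is grouplike with zero increment, and the relation applied with $z=-a$ recovers left-closure invariance), and the fact that conjugation by $\exp(z)$ alone already characterizes loop invariance, which you correctly outsource to \Cref{thm:vi} --- the forward implication from \Cref{prop:loop_invariants} with $h=\exp(z)$, the converse by differentiating $t\mapsto\langle\exp(tz)g\exp(-tz),\EL\rangle$ and using that its derivative pairs $\EL$ against $[z,g_t]$ with $g_t\in V$ level by level. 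One point worth making explicit in a final write-up: $g_0$ is again grouplike with zero increment, because conjugation by a grouplike preserves grouplikeness and leaves the level-one part $\pi_1 g$ unchanged; your argument silently uses this when applying the right-closure relation to $g_0$.
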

\begin{proof}
    Let $u$ be both a left- and right-closure invariant.
    Then, for any loop $O$ and any linear $L$,
    \begin{equation}
     \langle\sig(O),u\rangle
     =\langle\sig(O\sqcup L),u\rangle
     =\langle\sig(\backwards{L}\sqcup O\sqcup L),u\rangle,
    \end{equation}
    where in the first equality we have used right-closure invariance,
    and left-closure invariance in the second.
    Iteratively, we see that $\langle\sig(\cdot),u\rangle$ is invariant under conjugation of loops by any piecewise linear paths,
    and thus, using Chen-Chow, we see that $u$ is a loop invariant.

    Let now $v$ be a right-closure invariant and a loop invariant.
    Then, for any loop $O$ and linear $L$,
    \begin{equation*}
     \langle\sig(O),v\rangle
     =\langle\sig(\backwards{L}\sqcup O\sqcup L),v\rangle
     =\langle\sig(\backwards{L}\sqcup O),v\rangle,
    \end{equation*}
    where we used loop invariance in the first equality,
    and right-closure invariance in the second equality.
    Since $L$ was arbitrary linear, $\backwards{L}$ is an arbitrary linear path,
    and so $v$ is a left-closure invariant.
\end{proof}

\begin{proposition}
  In $d=2$, the intersection of right-closure invariants with conjugation invariants is $\{0\}$.
\end{proposition}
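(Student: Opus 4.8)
The plan is to translate both invariance properties into orthogonality conditions and reduce the statement to a single spanning identity in $T(\R^2)$. Both $\conjInvariants$ and the right-closure invariants are graded, so it suffices to work at a fixed homogeneous level $m\ge 1$ (the empty word spans the trivial level-$0$ intersection and is excluded). By \Cref{prop:conj_invariants} a level-$m$ element $\EL$ is a conjugation invariant iff $\EL\in[T_{m-1}(\R^2),\R^2]^{\perp}$. For the closure side I would use \Cref{prop:equivalent_closure}.iii: $\EL$ is right-closure invariant iff $\langle v\, z^{\bullet k},\EL\rangle=0$ for all $v\in V$, $z\in\R^2$, $k\ge 1$. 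Polarising $z^{\bullet k}=(a\w1+b\w2)^{\bullet k}$ in $a,b$, this is equivalent to $\langle v\itensor s,\EL\rangle=0$ for all $v\in V$ and all symmetric tensors $s$ of positive degree, where the symmetric tensors of degree $k$ are $\mathsf{Sym}_k:=\operatorname{span}\{\w{i_1}\shuffle\cdots\shuffle\w{i_k}\}$ and $\mathsf{Sym}:=\bigoplus_{k\ge 0}\mathsf{Sym}_k$. Since a finite intersection of orthocomplements is the orthocomplement of the sum,
\begin{equation*}
\conjInvariants_m\cap\mathsf{RCl}_m=U_m^{\perp},\qquad U_m:=[T_{m-1}(\R^2),\R^2]+\sum_{k=1}^{m}V_{m-k}\itensor\mathsf{Sym}_k,
\end{equation*}
so the proposition is equivalent to the spanning identity $U_m=T_m(\R^2)$ for every $m\ge 1$.

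Second, I would pass to the quotient $Q_m:=T_m(\R^2)/[T_{m-1}(\R^2),\R^2]$. Because $[T_{m-1}(\R^2),\R^2]$ is spanned by the elements $w\,\w i-\w i\,w$ with $\w i$ a letter, concatenation by single letters becomes cyclic in $Q_m$; iterating $w\,\w i\equiv\w i\,w$ shows that a word is congruent to each of its cyclic rotations, so $Q_m$ is freely spanned by the binary necklaces of length $m$. Under this identification $U_m=T_m(\R^2)$ becomes the assertion that the classes of the products $v\itensor s$ ($v\in V$, $s\in\mathsf{Sym}_{\ge 1}$) span the necklace space $Q_m$. Two handles make this tractable: the pure powers $\w1^{\bullet p},\w2^{\bullet q}\in\mathsf{Sym}$ are single words, and $v$ ranges over the entire derived enveloping algebra $V$ of \Cref{prop:VnLiePoly}, whose graded dimensions are given by \Cref{cor:dimVn}. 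The $v=1$ contributions are the elements $e_{p,q}=\tfrac1{p!q!}\,\w1^{\shuffle p}\shuffle\w2^{\shuffle q}\in\mathsf{Sym}$, whose class is $\sum_{N}|N|\,[N]$ over the necklaces $N$ of bidegree $(p,q)$; these already handle bidegrees carrying a single necklace (such as $q\le 1$). The genuine content lies in bidegrees $(p,q)$ with $p,q\ge 2$, where several necklaces share one bidegree and must be separated by the non-letter brackets in $V$ (beginning with $[\w1,\w2]$), exactly as in the level-$4$ example where $\langle[\w1,[\w1,\w2]]\,\w2,\ \rot(\word{1122})-\rot(\word{1212})\rangle\neq 0$.

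The main obstacle is precisely this surjectivity of $V\itensor\mathsf{Sym}$ onto the necklace space in $d=2$. I would attempt it by induction on the number $q$ of occurrences of $\w2$ (symmetrically, on $m$): realise a target necklace as the class of $v'\itensor\w2^{\bullet q}$ for a suitable $v'\in V$, then reduce the remainder to smaller $q$ using the cyclic relations in $Q_m$ together with the shuffle handles $e_{p,q}$. The restriction $d=2$ enters here in an essential way for the bookkeeping: with only two letters each bidegree is a single pair $(p,q)$, and the dimension budget of $V$ relative to the necklace count is exactly matched; I would not claim anything about $d\ge 3$, where the corresponding accounting is different. As an independent sanity check one can compare $\dim T_m(\R^2)=2^m$ with the necklace count \oeis{A000031} and $\dim V_n=[(1-q)^2/(1-2q)]_{q^n}$ (\Cref{cor:dimVn}); but since that comparison only re-expresses the surjectivity it does not by itself prove it, and the inductive spanning argument remains the crux.
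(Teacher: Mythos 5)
Your reduction is correct as far as it goes: working level by level (both invariant spaces are graded, since $\rcl$ is graded and $\conjInvariants$ is), \Cref{prop:conj_invariants} gives $\conjInvariants_m=[T_{m-1}(\R^2),\R^2]^{\perp}$, \Cref{prop:equivalent_closure}.iii plus polarization gives the right-closure invariants as $\bigl(\sum_{k=1}^m V_{m-k}\itensor\mathsf{Sym}_k\bigr)^{\perp}$, and the proposition is indeed equivalent to the spanning identity $[T_{m-1}(\R^2),\R^2]+\sum_{k=1}^{m}V_{m-k}\itensor\mathsf{Sym}_k=T_m(\R^2)$ for all $m\ge 1$, i.e.\ to the classes of $v\itensor s$ spanning the necklace space $Q_m$. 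But that spanning identity \emph{is} the proposition — nothing has been discharged by the reformulation — and your proposal stops exactly there. The induction on the number of $\w2$'s is announced, not executed: you do not construct the $v'\in V$ realising a given necklace, you do not exhibit the reduction of the remainder to smaller $q$, and you do not show the procedure terminates. The level-$4$ computation with $[\w1,[\w1,\w2]]\itensor\w2$ is a single sanity check, not an inductive step, and as you yourself concede, the dimension bookkeeping (necklace count \oeis{A000031} against $\dim V_n$ from \Cref{cor:dimVn}) cannot substitute for surjectivity. So the proposal has a genuine gap: the entire content of the proposition is deferred to an unproven combinatorial claim.

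It is worth seeing how the paper avoids this combinatorics altogether. Its proof is geometric: for $u$ in the intersection, conjugation invariance lets one cyclically rotate a loop $O=O_1\sqcup O_2$, and right-closure invariance lets one append and remove linear segments, and combining the two shows $\langle\sig(\cdot),u\rangle$ is unchanged by inserting a parallelogram $L_1\sqcup L_2\sqcup\backwards{L_1}\sqcup\backwards{L_2}$ anywhere into a loop. The input special to $d=2$ is then the geometric fact that every planar piecewise linear loop is built, up to tree-like equivalence, from the constant loop by such parallelogram insertions; hence $\langle\sig(\cdot),u\rangle$ vanishes on all piecewise linear loops, then (by closure invariance) on all piecewise linear paths, and $u=0$ by Chen--Chow. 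So where your route needs an inductive generation statement for $V\itensor\mathsf{Sym}$ in the necklace quotient, the paper needs a generation statement for planar loops under parallelogram moves — morally dual claims, but the latter is available and elementary, while the former you would have to prove from scratch. If you want to salvage your algebraic route, proving your spanning identity would in fact be a nice dual re-derivation; as written, however, it is a correct setup followed by a conjecture.
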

\begin{proof}
  Let $u\in\im\rcl\circ\rot\cap\im\rot$.
  Let $O$ be any loop,
  and $O_1,O_2$ be any two paths such that $O=O_1\sqcup O_2$.
  Then,
  for any linear $L_1,L_2$,
  \begin{align*}
      \langle\sig(O),u\rangle
      &=\langle\sig(O_1\sqcup O_2),u\rangle
      =\langle\sig(O_2\sqcup O_1),u\rangle
      =\langle\sig(O_2\sqcup O_1\sqcup L_1),u\rangle\\
      &=\langle\sig(\backwards{L_2}\sqcup O_2\sqcup O_1\sqcup L_1\sqcup L_2),u\rangle
      =\langle\sig(\backwards{L_2}\sqcup O_2\sqcup O_1\sqcup L_1\sqcup L_2\sqcup\backwards{L_1}),u\rangle\\
      &=\langle\sig(O_1\sqcup L_1\sqcup L_2\sqcup \backwards{L_1}\sqcup\backwards{L_2}\sqcup O_2),u\rangle
  \end{align*}
 So $\langle\sig(\cdot),u\rangle$ is invariant under insertion of parallelograms $L_1\sqcup L_2\sqcup \backwards{L_1}\backwards{L_2}$ into loops.
 As up to tree-like equivalence,
 we may construct any two-dimensional piecewise linear loop by repeated insertion of parallelograms starting from the constant path (loop),
 $\langle\sig(\cdot),u\rangle$ is thus zero on any piecewise linear loop.
 Thus by closure invariance it is zero on any piecewise linear path,
 and so $u=0$ by Chen-Chow.
\end{proof}

Conjecturally, this holds for every $d$.
\begin{conjecture}
  \label{conj:closure_conj}
    For any $d$, the intersection of right-closure invariants with conjugation invariants is $\{0\}$.
\end{conjecture}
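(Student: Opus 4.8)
The plan is to upgrade the $d=2$ argument proved above, whose only genuinely two-dimensional ingredient is the claim that every piecewise linear loop is generated by parallelogram insertions. As in that proof, it suffices to treat $u$ with vanishing constant term, simultaneously a (full) conjugation invariant and a right-closure invariant, and to show $u=0$. First I would record the \emph{parallelogram-insertion invariance}: for any loop $O=O_1\sqcup O_2$ and any linear paths $L_1,L_2$,
\[
  \langle\sig(O_1\sqcup L_1\sqcup L_2\sqcup\backwards{L_1}\sqcup\backwards{L_2}\sqcup O_2),u\rangle=\langle\sig(O),u\rangle .
\]
This is proved by exactly the chain of equalities used in the $d=2$ case: each step uses only the cyclic identity $\langle\sig(AB),u\rangle=\langle\sig(BA),u\rangle$ (full conjugation invariance) and the appending of a linear path to a loop, $\langle\sig(g),u\rangle=\langle\sig(g)\exp(z),u\rangle$ for grouplike $g$ of zero increment (right-closure invariance, \Cref{prop:equivalent_closure}). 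Neither step is special to $d=2$. Granting the generation statement below, $u$ then vanishes on every loop, and $\langle\sig(X),u\rangle=\langle\sig(X\sqcup R_X),u\rangle=0$ together with Chen--Chow forces $u=0$.

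The crux is therefore to prove, for all $d$, that every piecewise linear loop is obtained from the constant loop by finitely many parallelogram insertions (up to tree-like equivalence). I would phrase this group-theoretically. Fix a truncation level $N$ and let $G_0$ be the group of grouplike elements of $T_{\le N}(\R^d)$ with zero increment (the ``loop group''), a simply connected unipotent Lie group with Lie algebra $[\FL(\R^d),\FL(\R^d)]_{\le N}$, the derived free Lie algebra of \Cref{cor:dimVn}. Writing $P_{a,b}$ for the (truncated) parallelogram with sides $a,b\in\R^d$, an insertion amounts to left multiplication by a conjugate $\sig(g)P_{a,b}\sig(g)^{-1}$, and conversely every such conjugate is realised by inserting $P_{a,b}$ into the trivial loop $g\sqcup\backwards{g}$. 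Hence the set of reachable loops is exactly the subgroup $H\subseteq G_0$ generated by all conjugates of parallelograms, and it suffices to show $H=G_0$.

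Two structural features drive the computation. First, $H$ is normal in the full path group, since $\gamma\bigl(\sig(g)P\sig(g)^{-1}\bigr)\backwards{\gamma}=\sig(\gamma g)\,P\,\sig(\gamma g)^{-1}$ is again a conjugate of a parallelogram. Second, $H$ is invariant under the dilations $\delta_\lambda$ (scaling $\R^d$ by $\lambda$), because $\delta_\lambda(P_{a,b})=P_{\lambda a,\lambda b}$. Invoking Yamabe's theorem, $H$ is a connected Lie subgroup; its Lie algebra $\operatorname{Lie}(H)$ is then a \emph{graded} (by dilation invariance) \emph{ideal} (by normality) of $\FL(\R^d)$. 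The degree-two component of $\log P_{a,b}$ is $[a,b]$, and these span $\FL_2(\R^d)$, so $\FL_2(\R^d)\subseteq\operatorname{Lie}(H)$. Since the free Lie algebra satisfies $\FL_n(\R^d)=[\R^d,\FL_{n-1}(\R^d)]$, the ideal generated by $\FL_2(\R^d)$ is all of $\bigoplus_{n\ge2}\FL_n(\R^d)=[\FL(\R^d),\FL(\R^d)]$; as $\operatorname{Lie}(H)\subseteq[\FL(\R^d),\FL(\R^d)]$ (zero increment), we conclude $\operatorname{Lie}(H)=\operatorname{Lie}(G_0)$ and hence $H=G_0$.

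The main obstacle is the Lie-theoretic passage in the previous paragraph: showing that the \emph{abstractly} generated subgroup $H$ is a closed connected Lie subgroup whose Lie algebra is generated by the logarithms of its generators, and controlling the limit $N\to\infty$. Working inside the finite-dimensional unipotent $G_0$ and appealing to the orbit/Yamabe theorem should suffice, but this is the delicate point. As a purely algebraic alternative that sidesteps the group theory, one can instead try to prove the equivalent spanning identity that in every degree $n$,
\[
  [\R^d,T_{n-1}(\R^d)]+\spann\{\,v\,z^{\itensor k}\mid v\in V,\ z\in\R^d,\ k\ge1\,\}=T_n(\R^d),
\]
since the two summands are precisely the annihilators of the conjugation invariants (\Cref{prop:conj_invariants}) and of the right-closure invariants (\Cref{prop:equivalent_closure}); this identity is the combinatorial heart of the conjecture, and is where I expect the real work to lie.
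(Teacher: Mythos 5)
You should first note a mismatch with the premise of this exercise: the paper offers \emph{no} proof of this statement — it is posed as an open conjecture, with only the $d=2$ case established in the preceding proposition. So the right comparison is against that $d=2$ argument, and there your proposal is on target: you correctly isolate the single planar ingredient (that every piecewise linear loop is generated, up to tree-like equivalence, by parallelogram insertions), and you correctly observe that the insertion-invariance chain itself is dimension-free. Indeed, writing it algebraically: for grouplike $g_1,g_2$ with $g_1g_2$ of zero increment and $z_1,z_2\in\R^d$, full conjugation invariance ($\langle AB,u\rangle=\langle BA,u\rangle$ on \emph{all} grouplikes) together with \Cref{prop:equivalent_closure} gives
\begin{align*}
\langle g_1g_2,u\rangle
=\langle g_2g_1,u\rangle
=\langle g_2g_1\exp(z_1),u\rangle
=\langle \exp(-z_2)g_2g_1\exp(z_1)\exp(z_2),u\rangle
=\langle g_1P_{z_1,z_2}g_2,u\rangle,
\end{align*}
with $P_{a,b}=\exp(a)\exp(b)\exp(-a)\exp(-b)$, exactly as in the paper's $d=2$ chain. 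Your decisive simplification over the paper's geometric claim is the truncation observation: since $u$ has bounded degree $N$, you need generation only in the truncated loop group $G_0$, with Chow's theorem realizing truncated grouplike conjugators as piecewise linear paths; no $N\to\infty$ limit arises at all, contrary to your own worry.

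As for the point you flag as delicate: it is sound as written (Yamabe plus closedness of connected subgroups of simply connected nilpotent groups, plus the scaling limit $t^{-2}\log P_{ta,tb}\to[a,b]$ in the closed subspace $\log H$), but you can bypass Lie theory entirely with an elementary staircase induction in the finite-dimensional unipotent $G_0$: from $\log P_{a,b}=[a,b]+(\text{degree}\ge3)$ and normality of $H$, the group commutator $\exp(c)\,h\exp(-c)\,h^{-1}\in H$ applied to $h=\exp(v+\text{h.o.t.})$, $v\in\FL_m(\R^d)$, produces $\exp([c,v]+\text{degree}\ge m+2)$; since $\FL_{m+1}(\R^d)=[\R^d,\FL_m(\R^d)]$ (the right-comb argument of \Cref{lem:LieFLA}), a degree-by-degree correction yields $H=G_0$ at every truncation level. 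Two small repairs are needed. First, as literally stated the conjecture fails at level $0$: $\R\e$ lies in both invariant spaces (as it does in the paper's $d=2$ proposition), so the statement must be read ignoring constants — your reduction to vanishing constant term handles this, but say so explicitly. Second, your closing ``equivalent spanning identity'' is the correct dual reformulation (the annihilators being $[\R^d,T(\R^d)]$ by \Cref{prop:conj_invariants} and $\spann\{vz^{\itensor k}\}$ by \Cref{prop:equivalent_closure}), and your group-theoretic argument in effect proves it — so the ``real work'' you defer there has already been done by your own second paragraph. Modulo writing out these routine details, your route appears to settle a conjecture the paper leaves open, which is strictly more than the paper provides.
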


\begin{definition}
    The \DEF{right-closure operator} is defined as
    \begin{equation*}
        \rcl=\shuffle\circ (\operatorname{id}\otimes H)\circ \Delta_{\bullet}
    \end{equation*}
    where $H(\word{i}_1\dots\word{i}_n):=\frac{(-1)^n}{n!}\word{i}_1\shuffle\cdots\shuffle\word{i}_n$.
    The following lemma shows, that it
    is the algebraic realization
    of right-closing a path.
\end{definition}
\begin{lemma}\label{lem:rcl}
    \begin{equation*}\langle\sig(X),\rcl(w)\rangle
    =\langle\sig(X\sqcup R_X),w\rangle,\end{equation*} with $R_X$ as above.
\end{lemma}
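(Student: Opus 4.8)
The plan is to unfold both sides into sums over deconcatenations of $w$ and to match them term by term. Write $g := \sig(X)$, which is grouplike, and let $v = \sum_i v^i\word{i}\in\R^d$ denote the increment of $X$, so that $v^i = \langle g,\word{i}\rangle$. Since $R_X$ is the linear path whose increment equals that of $\backwards{X}$, namely $-v$, its signature is the concatenation exponential $\sig(R_X) = \exp_{\bullet}(-v) = \sum_{k\ge 0}\frac{(-1)^k}{k!}v^{\bullet k}$. By Chen's identity together with the duality between the concatenation product and the deconcatenation coproduct $\Delta_{\bullet}$,
\begin{align*}
 \langle\sig(X\sqcup R_X),w\rangle
 = \langle g\bullet\sig(R_X),w\rangle
 = \sum_{w=ab}\langle g,a\rangle\,\langle\exp_{\bullet}(-v),b\rangle,
\end{align*}
where the sum runs over all factorizations $w = ab$ of $w$ into a prefix $a$ and a suffix $b$.

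For the left-hand side, expanding the definition $\rcl = \shuffle\circ(\id\otimes H)\circ\Delta_{\bullet}$ gives $\rcl(w) = \sum_{w=ab} a\shuffle H(b)$, whence
\begin{align*}
 \langle\sig(X),\rcl(w)\rangle
 = \sum_{w=ab}\langle g, a\shuffle H(b)\rangle
 = \sum_{w=ab}\langle g,a\rangle\,\langle g,H(b)\rangle,
\end{align*}
where the last equality uses that $g$ is grouplike. It therefore suffices to prove the single identity $\langle g,H(b)\rangle = \langle\exp_{\bullet}(-v),b\rangle$ for every word $b$, after which the two displayed sums agree summand by summand and the lemma follows.

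The core is this last identity, which is also where the combinatorial constant in the definition of $H$ earns its place. For a word $b = \word{i_1}\cdots\word{i_k}$ one has $H(b) = \frac{(-1)^k}{k!}\word{i_1}\shuffle\cdots\shuffle\word{i_k}$; pairing against the grouplike $g$ and applying grouplikeness repeatedly converts the shuffle into a product of increments, giving $\langle g,H(b)\rangle = \frac{(-1)^k}{k!}v^{i_1}\cdots v^{i_k}$. On the other side, only the degree-$k$ part $\frac{(-1)^k}{k!}v^{\bullet k}$ of $\exp_{\bullet}(-v)$ pairs nontrivially with $b$, and $\langle v^{\bullet k},b\rangle = v^{i_1}\cdots v^{i_k}$, producing the same scalar. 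I expect the main obstacle to be purely bookkeeping: keeping the two uses of grouplikeness straight (one to split the shuffle $a\shuffle H(b)$, the other to evaluate $H(b)$), and confirming that the sign and factorial built into $H$ reproduce the reversed-increment exponential $\exp_{\bullet}(-v)$ exactly, rather than merely up to a constant. No analytic input beyond Chen's identity and the formula $\sig(R_X)=\exp_{\bullet}(-v)$ for linear paths is required.
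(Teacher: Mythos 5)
Your proof is correct and follows essentially the same route as the paper's: Chen's identity combined with the concatenation--deconcatenation duality, the key identity $\langle\sig(R_X),b\rangle=\langle\sig(X),H(b)\rangle$ established via the exponential signature $\exp_{\bullet}(-v)$ of the linear path and grouplikeness, and grouplikeness once more to split $\langle g, a\shuffle H(b)\rangle$ into a product. The only difference is notational: you spell out explicit sums over factorizations $w=ab$, where the paper carries out the same steps in operator form with $\Delta_\bullet$ and $\Delta_\shuffle$.
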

\begin{proof}

First,
\begin{align*}
    \langle \sig(R_X), v_1 \dots v_n \rangle
    &=
    \langle \exp( - X_{0,1} ), v_1 \dots v_n \rangle \\
    &=
    \frac{(-1)^n}{n!}
    \langle X_{0,1}, v_1 \rangle \dots 
    \langle X_{0,1}, v_n \rangle \\
    &=
    \frac{(-1)^n}{n!}
    \langle \sig(X), v_1 \rangle \dots 
    \langle \sig(X), v_n \rangle \\
    &=
    \langle \sig(X), H(v) \rangle.
\end{align*}
Then
\begin{align*}
    \langle \sig(X \sqcup R_X), w \rangle
    &=
    \langle \sig(X) \sig(R_X), w \rangle \\
    &=
    \langle \sig(X) \otimes \sig(R_X), \Delta_\bullet w \rangle \\
    &=
    \langle \sig(X) \otimes \sig(X), (\id \otimes H) \circ \Delta_\bullet w \rangle \\
    &=
    \langle \Delta_\shuffle \sig(X), (\id \otimes H) \circ \Delta_\bullet w \rangle \\
    &=
    \langle \sig(X), \shuffle \circ (\id \otimes H) \circ \Delta_\bullet w \rangle.
\end{align*}
    
\end{proof}

\begin{corollary}
 $\rcl$ is idempotent (a projection),
 and $\EL$ is a right-closure invariant if and only if $\EL\in \im \rcl$.
\end{corollary}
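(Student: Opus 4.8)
The plan is to establish the two assertions of the corollary in turn, leaning entirely on \Cref{lem:rcl} together with the definition of right-closure invariance.

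\emph{Idempotence.} First I would show $\rcl\circ\rcl=\rcl$. The cleanest route is to argue at the level of paths rather than to manipulate the coproduct and shuffle maps directly. By \Cref{lem:rcl}, for any word $w$ and any path $X$,
\begin{align*}
  \langle \sig(X), \rcl(\rcl(w)) \rangle
  = \langle \sig(X \sqcup R_X), \rcl(w) \rangle
  = \langle \sig\bigl( (X \sqcup R_X) \sqcup R_{X\sqcup R_X} \bigr), w \rangle.
\end{align*}
Now $R_X$ is the straight line with the same increment as $\backwards{X}$, so $X \sqcup R_X$ already has total increment zero; hence $R_{X \sqcup R_X}$ is the constant (degenerate) path, whose signature is the unit and which therefore contributes nothing under concatenation. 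Thus the right-hand side equals $\langle \sig(X \sqcup R_X), w \rangle = \langle \sig(X), \rcl(w) \rangle$. Since this holds for every path $X$, and signatures of paths separate elements of $T(\R^d)$ (again by the Chen--Chow theorem used repeatedly above), we conclude $\rcl(\rcl(w)) = \rcl(w)$, i.e. $\rcl$ is a projection.

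\emph{Image characterization.} For the second claim, suppose $\EL \in \im\rcl$, say $\EL = \rcl(\psi)$. Then for every path $X$, \Cref{lem:rcl} gives $\langle \sig(X), \EL\rangle = \langle \sig(X \sqcup R_X), \psi\rangle$, and applying the same lemma to $X \sqcup R_X$ (whose right-closure is again degenerate, as above) shows $\langle \sig(X \sqcup R_X), \EL\rangle = \langle \sig(X \sqcup R_X), \psi\rangle$ as well, so the two agree and $\EL$ is a right-closure invariant. Conversely, if $\EL$ is a right-closure invariant, then by definition $\langle \sig(X), \EL\rangle = \langle \sig(X \sqcup R_X), \EL\rangle = \langle \sig(X), \rcl(\EL)\rangle$ for all $X$, whence $\EL = \rcl(\EL) \in \im\rcl$ by the same separation argument. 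In fact idempotence makes this a clean equivalence: $\im\rcl = \{\EL : \rcl(\EL) = \EL\}$ is precisely the fixed-point set of the projection, and the displayed computation identifies fixed points with right-closure invariants.

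\emph{Main obstacle.} I expect the only delicate point to be the bookkeeping around $R_{X\sqcup R_X}$ being degenerate: one must be careful that ``same increment as the reversed path'' really does force the second closing segment to be constant, and that a constant path's signature is the grouplike unit so that concatenating it is harmless up to tree-like equivalence. Everything else is a direct consequence of \Cref{lem:rcl} and the separation of elements by path signatures; no new algebraic identity for the maps $\Delta_\bullet$, $H$, or $\shuffle$ is needed, since the path-level argument bypasses a direct verification that $\shuffle\circ(\id\otimes H)\circ\Delta_\bullet$ squares to itself.
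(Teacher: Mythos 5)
Your proposal is correct and matches the paper's intent: the paper's proof is simply ``Immediate from Lemma \ref{lem:rcl}'', and the details it leaves implicit are exactly the ones you supply --- that $X\sqcup R_X$ has zero total increment so its right-closure segment is constant (making closure idempotent at the path level), plus the fact that path signatures, spanning each truncated tensor level by Chen--Chow and the spanning property of grouplikes, separate elements of $T(\R^d)$. Your care with $R_{X\sqcup R_X}$ being degenerate and with the fixed-point description $\im\rcl=\{\EL:\rcl(\EL)=\EL\}$ is exactly the right bookkeeping, so nothing further is needed.
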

\begin{proof}
 Immediate from Lemma \ref{lem:rcl}.
\end{proof}

\begin{lemma}
    \label{lem:ker_rcl_S}
    \begin{align*}
        \ker \rcl = S.
    \end{align*}
\end{lemma}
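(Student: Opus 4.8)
The plan is to prove the two inclusions $\ker\rcl \subseteq S$ and $S \subseteq \ker\rcl$ separately. The first is purely formal and follows from the observation that $\rcl$ acts as the identity modulo $S$; the second is where the geometric input (that $X \sqcup R_X$ is a loop) is genuinely needed.

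For $\ker\rcl \subseteq S$, I would first record that $\rcl(x) - x \in S$ for every $x$. Writing $\Delta_\bullet w = \sum_{uv=w} u \otimes v$, we get $\rcl(w) = \sum_{uv=w} u \shuffle H(v)$. The unique term with $v = \e$ contributes $w \shuffle H(\e) = w$, while every term with $v \neq \e$ has $H(v) = \frac{(-1)^{|v|}}{|v|!}\, v_1 \shuffle \cdots \shuffle v_{|v|} \in S$, so that $u \shuffle H(v) \in S$ because $S$ is a shuffle ideal. Hence $\rcl(w) - w \in S$ for every word $w$, and by linearity $\rcl(x) - x \in S$ for all $x \in T(\R^d)$. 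In particular, if $\rcl(x) = 0$ then $-x = \rcl(x) - x \in S$, so $x \in S$; this is exactly $\ker\rcl \subseteq S$.

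For $S \subseteq \ker\rcl$, I would use \Cref{lem:rcl}. Fix $w \in S$. For any path $X$, the closed-up path $X \sqcup R_X$ has vanishing total increment (since $R_X$ carries the increment of $\backwards{X}$), so $\sig(X \sqcup R_X)$ is grouplike with zero increment; by \Cref{lem:VnSn} such elements lie in $S_n^\bot$ on each level, whence $\langle \sig(X \sqcup R_X), w\rangle = 0$. \Cref{lem:rcl} then gives $\langle \sig(X), \rcl(w)\rangle = 0$ for every path $X$. Since grouplike elements, i.e.\ signatures of paths, span $T((\R^d))$ level by level (\cite[Lemma 3.4]{bib:DR2018}), this forces $\rcl(w) = 0$, so $S \subseteq \ker\rcl$. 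As an alternative one can first deduce from the same pairing that $\rcl$ is a shuffle homomorphism with $\rcl(\word{i}) = 0$ for each letter, and then conclude $\rcl(\word{i} \shuffle u) = \rcl(\word{i}) \shuffle \rcl(u) = 0$ directly.

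The main obstacle is precisely the inclusion $S \subseteq \ker\rcl$. The reverse inclusion is a formal consequence of $\rcl \equiv \id \pmod S$, but idempotency of $\rcl$ on its own does not yield $S \subseteq \ker\rcl$: trying to derive it from $\rcl^2 = \rcl$ together with $\rcl \equiv \id \pmod S$ is circular. One genuinely needs the geometric fact that $X \sqcup R_X$ is a loop, equivalently that $S$ annihilates every grouplike element of zero increment, which is exactly the content packaged by \Cref{lem:VnSn} and \Cref{lem:rcl}.
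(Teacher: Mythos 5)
Your proof is correct, and it differs from the paper's in an interesting way on one of the two inclusions. For $S\subseteq\ker\rcl$ you argue essentially as the paper does: pair $\rcl(w)$ against the signature of an arbitrary path via \Cref{lem:rcl}, observe that $\sig(X\sqcup R_X)$ is grouplike with zero increment and hence lies in $S_n^{\perp}=V_n$ levelwise by \Cref{lem:VnSn}, and conclude $\rcl(w)=0$ from the level-by-level spanning property of grouplikes. For $\ker\rcl\subseteq S$, however, the paper stays entirely in the duality picture: it uses that $S$ is precisely the annihilator of the zero-increment grouplikes (again \Cref{lem:VnSn}) together with the identity $\langle g,\phi\rangle=\langle g,\rcl(\phi)\rangle$ for such $g$ --- an identity it states without justification, and which follows from \Cref{lem:rcl} applied to loops (where $R_X$ is constant) plus Chow's theorem. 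Your combinatorial observation that $\rcl(x)-x\in S$, obtained by isolating the $v=\e$ term of $\Delta_\bullet$ and noting that $H(v)\in S$ for $v\neq\e$ because $S$ is the shuffle ideal generated by the letters, bypasses grouplikes entirely for this inclusion; it is more elementary, and as a bonus it furnishes a clean proof of the very identity the paper's argument rests on, since both sides of $\langle g,\phi\rangle=\langle g,\rcl(\phi)\rangle$ differ by an element of $S$, which a zero-increment grouplike annihilates. Your closing diagnosis is also accurate: idempotency together with $\rcl\equiv\id\pmod{S}$ only yields $\rcl(S)\subseteq S$, not $\rcl(S)=\{0\}$, so input equivalent to ``$S$ annihilates zero-increment grouplikes'' is genuinely needed for $S\subseteq\ker\rcl$, whether packaged via \Cref{lem:VnSn} and \Cref{lem:rcl} as you do, or via the shuffle-homomorphism alternative $\rcl(\word{i}\shuffle u)=\rcl(\word{i})\shuffle\rcl(u)=0$, whose homomorphism property in turn requires the same pairing-and-spanning argument.
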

\begin{proof}

    $\EL \in S$ if and only if for all grouplike $g$ with zero increment
    \begin{align*}
        \langle g, \EL \rangle = 0. 
    \end{align*}
    Now
    \begin{align*}
        \langle g, \EL \rangle
        =
        \langle g, \rcl(\EL) \rangle.
    \end{align*}
    So that, if $\rcl(\EL) = 0$, we have $\EL \in S$.

    If $\EL \in S$, then for all grouplike $g$ with zero increment
    \begin{align*}
        \langle g, \EL \rangle = 0. 
    \end{align*}
    Then
    \begin{align*}
        0
        = \langle g, \rcl(\EL) \rangle
        = \langle g', \rcl(\EL) \rangle,
    \end{align*}
    for any $g'$ that closes to $g$.
    Since $g$ was arbitrary with zero increment,
    we get
    \begin{align*}
        0 = \langle g', \rcl(\EL) \rangle,
    \end{align*}
    for all grouplike elements $g'$,
    which shows $\rcl(\EL) = 0$.
\end{proof}

\newcommand\proj{\mathsf{proj}}
\begin{proposition}
 The dimension of the right closure invariants (equal to the dimension of the left closure invariants)
 is given by
 \begin{equation*}
  \dim\proj_n\im\rcl=\dim T_n(\R^d)-\dim S_n=\dim V_n,
 \end{equation*}
 and thus given by Corollary \ref{cor:dimVn}
\end{proposition}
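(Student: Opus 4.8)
The plan is to exploit that $\rcl$ is a \emph{graded} idempotent whose kernel is exactly $S$, so that the whole dimension count collapses to a rank--nullity argument on each homogeneous level. First I would check that $\rcl=\shuffle\circ(\id\otimes H)\circ\Delta_\bullet$ respects the grading: the deconcatenation $\Delta_\bullet$ and the shuffle product both preserve total degree, and $H$ sends a word of length $n$ to a scalar multiple of the shuffle of its $n$ letters, so it too preserves degree. Hence $\rcl$ restricts to an endomorphism $\rcl_n$ of each $T_n(\R^d)$ and $\proj_n\im\rcl=\im\rcl_n$. This homogeneity is the only point that needs a little care, since it is what lets one intersect the global identity $\ker\rcl=S$ with $T_n$ to obtain precisely $S_n$; everything else is a direct appeal to the preceding lemmas.

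By the corollary following \Cref{lem:rcl}, $\rcl$ is idempotent, so each $\rcl_n$ is idempotent and rank--nullity gives
\begin{align*}
 \dim\proj_n\im\rcl=\dim\im\rcl_n=\dim T_n(\R^d)-\dim\ker\rcl_n.
\end{align*}
From \Cref{lem:ker_rcl_S} we have $\ker\rcl=S$, which by gradedness means $\ker\rcl_n=S_n$, so $\dim\ker\rcl_n=\dim S_n$. Finally \Cref{lem:VnSn} gives $V_n=S_n^\bot$, whence $\dim S_n=\dim T_n(\R^d)-\dim V_n$; substituting yields both claimed identities, and the explicit value is read off from the generating function of \Cref{cor:dimVn}.

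For the parenthetical equality with the left-closure invariants, I would observe that the analogue of \Cref{lem:rcl} holds for the analogously defined left-closure operator $\lcl$, namely $\langle\sig(X),\lcl(w)\rangle=\langle\sig(R_X\sqcup X),w\rangle$. Since $R_X\sqcup X$ is again grouplike with zero increment, the argument of \Cref{lem:ker_rcl_S} shows $\ker\lcl=S$, and $\lcl$ is graded and idempotent for the same reasons as $\rcl$. The identical rank--nullity computation then gives $\dim\proj_n\im\lcl=\dim T_n(\R^d)-\dim S_n=\dim V_n$, matching the right-closure count and completing the proof.
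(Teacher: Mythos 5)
Your proof is correct and is essentially the argument the paper intends: the proposition is stated without an explicit proof precisely because it follows, exactly as you spell out, from $\rcl$ being a graded idempotent with $\ker\rcl=S$ (\Cref{lem:ker_rcl_S}, and the corollary identifying right-closure invariants with $\im\rcl$) combined with $\dim S_n^\bot=\dim V_n$ from \Cref{lem:VnSn}, giving $T_n(\R^d)=S_n\oplus\pi_n\im\rcl$ by rank--nullity. Your handling of the left-closure case via the mirrored operator $\lcl=\shuffle\circ(H\otimes\id)\circ\Delta_\bullet$ is the same symmetric argument the paper implicitly relies on, so nothing is missing.
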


\begin{proposition}
 The loop-and-closure invariants are isomorphic, as a graded algebra, to the letter-reduced loop invariants.
\end{proposition}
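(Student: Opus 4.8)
The plan is to realize both sides inside the shuffle algebra $(T(\R^d),\shuffle)$ and to exhibit the canonical quotient projection $\loopInvariants \to \loopInvariants/S$, restricted to the loop-and-closure invariants, as the desired isomorphism. First I would identify the loop-and-closure invariants with $\im\rcl \cap \loopInvariants$: by the corollary that right-closure invariance is equivalent to membership in $\im\rcl$, together with the preceding proposition describing the loop-and-closure invariants as those elements that are simultaneously right-closure and loop invariants. Recall also the ambient structure: by \Cref{lem:ker_rcl_S} we have $\ker\rcl = S$, and since $\rcl$ is idempotent this yields $T(\R^d) = \im\rcl \oplus S$; moreover $\rcl$ is degree-preserving, being built from the deconcatenation, $\id\otimes H$, and the shuffle, so the decomposition is graded. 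Finally $S$ is a graded shuffle ideal of $\loopInvariants$ (it is the shuffle ideal of $T(\R^d)$ generated by the letters, hence absorbs shuffling, and $S\subseteq\loopInvariants$ as already noted), so $\loopInvariants/S$ is a graded algebra and the projection $q$ is a graded algebra homomorphism.

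The crux of the argument is the inclusion $\rcl(\loopInvariants)\subseteq\loopInvariants$. The key observation is that if $A$ is a loop then $R_A$ has increment equal to minus the increment of $A$, which is zero, so $R_A$ is the constant path and $\sig(A\sqcup R_A)=\sig(A)$. Hence by \Cref{lem:rcl}, for every $\EL\in T(\R^d)$ and every loop $A$,
\begin{equation*}
\langle\sig(A),\rcl(\EL)\rangle=\langle\sig(A\sqcup R_A),\EL\rangle=\langle\sig(A),\EL\rangle,
\end{equation*}
that is, $\rcl(\EL)$ and $\EL$ agree on all loops. Consequently, if $\EL$ is a loop invariant and $A,B$ are loops differing only by their starting point, then $\langle\sig(A),\rcl(\EL)\rangle=\langle\sig(A),\EL\rangle=\langle\sig(B),\EL\rangle=\langle\sig(B),\rcl(\EL)\rangle$, so $\rcl(\EL)$ is again a loop invariant.

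With this in hand the proof becomes formal. Since $S\subseteq\loopInvariants$ and $\rcl(\loopInvariants)\subseteq\loopInvariants$, the idempotent splitting restricts to $\loopInvariants=\rcl(\loopInvariants)\oplus S$, and $\rcl(\loopInvariants)=\im\rcl\cap\loopInvariants$ is exactly the loop-and-closure invariants. Restricting $q$ to this subspace gives a graded algebra homomorphism which is injective, since its kernel is $(\im\rcl\cap\loopInvariants)\cap S\subseteq \im\rcl\cap\ker\rcl=\{0\}$, and surjective, since any $\EL\in\loopInvariants$ satisfies $q(\rcl(\EL))=q(\EL)$ because $\rcl(\EL)-\EL\in\ker\rcl=S$. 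It remains to note that the loop-and-closure invariants genuinely form a graded shuffle subalgebra: $\loopInvariants$ is a subalgebra, and $\im\rcl$ is a subalgebra because for right-closure invariants $\tau_1,\tau_2$ the grouplikeness of $\sig(X)$ and $\sig(X\sqcup R_X)$ gives $\langle\sig(X),\tau_1\shuffle\tau_2\rangle=\langle\sig(X),\tau_1\rangle\langle\sig(X),\tau_2\rangle=\langle\sig(X\sqcup R_X),\tau_1\rangle\langle\sig(X\sqcup R_X),\tau_2\rangle=\langle\sig(X\sqcup R_X),\tau_1\shuffle\tau_2\rangle$. Hence $q$ restricts to an isomorphism of graded algebras onto $\loopInvariants/S$, which is the space of letter-reduced loop invariants.

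The only non-routine step is the inclusion $\rcl(\loopInvariants)\subseteq\loopInvariants$, and I expect the single idea that the closing path $R_A$ is trivial for loops $A$ to dispatch it cleanly; everything else is bookkeeping with the idempotent $\rcl$, its kernel $S$, and the quotient map.
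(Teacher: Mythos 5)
Your proof is correct and takes essentially the same route as the paper: both rest on $\ker\rcl=S$ (\Cref{lem:ker_rcl_S}), the idempotence of $\rcl$, and the containment $\rcl(\loopInvariants)\subseteq\loopInvariants$, which the paper records as a separate lemma immediately after the proposition and which you prove inline via the same key observation that $R_A$ is trivial for a loop $A$, so $\rcl(\EL)$ and $\EL$ agree on all loops. The only cosmetic difference is direction: you present the isomorphism as the quotient projection restricted to $\im\rcl\cap\loopInvariants$, whereas the paper writes down its inverse $[v]\mapsto\rcl(v)$.
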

\begin{proof}
 The isomorphism is given by $[v]\mapsto \rcl(v)$, $v\in \loopInvariants$.
 This map is well-defined and injective as $\ker \rcl=S$, \Cref{lem:ker_rcl_S}.
 It is graded, as $\rcl$ is graded,
 it is an algebra homomorphism,
 as $\rcl$ is a shuffle homomorphism and $S$ is a shuffle ideal.
 Finally, it is surjective,
 as the loop-and-closure invariants are contained both in the image of $\rcl$
 and in the loop invariants,
 and $\rcl$ is a projection.
\end{proof}

This isomorphism is compatible with signatures of loops in the sense that
\begin{equation*}
 \langle\sig(O),[v]\rangle:=\langle\sig(O),v\rangle=\langle\sig(O),\rcl(v)\rangle
\end{equation*}
for any $v\in\loopInvariants$.

\begin{lemma}

    \begin{align*}
        \rcl \loopInvariants \subset \loopInvariants     
    \end{align*}
    
\end{lemma}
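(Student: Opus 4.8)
The plan is to exploit a simple geometric observation: right-closing a path that is \emph{already} a loop does nothing, so that $\rcl$ acts as the identity when it is tested against signatures of loops. Since loop invariance is a statement purely about the signatures of loops, it will then be automatically preserved by $\rcl$. I would therefore avoid the algebraic characterization $\loopInvariants = [V,\R^d]^\perp$ entirely and argue directly from the signature definition.

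First I would recall \Cref{lem:rcl}, which states that for every path $X$ and every word $w$,
\[
  \langle \sig(X), \rcl(w) \rangle = \langle \sig(X \sqcup R_X), w \rangle,
\]
and which extends linearly to all of $T(\R^d)$ in place of $w$. I would then specialize $X$ to a closed path $O$. Because $O$ is a loop, its increment vanishes, so $R_O$ --- the linear path with the same increment as $\backwards{O}$ --- is the constant path; hence $\sig(R_O) = \e$ and, by multiplicativity of $\sig$, $\sig(O \sqcup R_O) = \sig(O)$. Plugging this in gives, for every $\EL \in T(\R^d)$ and every loop $O$,
\[
  \langle \sig(O), \rcl(\EL) \rangle = \langle \sig(O \sqcup R_O), \EL \rangle = \langle \sig(O), \EL \rangle,
\]
which is exactly the compatibility already recorded after the loop-and-closure isomorphism above.

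With this identity in hand the conclusion is immediate. I would take an arbitrary $\EL \in \loopInvariants$ and any two closed paths $A,B$ differing only by their starting point, and chain the identity applied to $A$ and to $B$, using loop invariance of $\EL$ in the middle:
\[
  \langle \sig(A), \rcl(\EL) \rangle = \langle \sig(A), \EL \rangle = \langle \sig(B), \EL \rangle = \langle \sig(B), \rcl(\EL) \rangle.
\]
As $A,B$ were arbitrary, $\rcl(\EL)$ is a loop invariant, i.e.\ $\rcl(\EL) \in \loopInvariants$. There is essentially no analytic obstacle here; the only point to watch is the reduction that closing a loop is trivial (its increment is zero), which collapses $\rcl$ to the identity on precisely the functionals $\sig(O)$ that detect loop invariance. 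An alternative, purely algebraic route would use $\loopInvariants = [V,\R^d]^\perp$ from \Cref{thm:vi} and verify $\langle [v,\word{i}], \rcl(\EL)\rangle = 0$ for all $v\in V$ and letters $\word{i}$, but this requires transporting the bracket through the adjoint of $\rcl$ and is strictly more work, so I would prefer the signature argument.
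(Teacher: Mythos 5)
Your proposal is correct and takes essentially the same route as the paper: both proofs hinge on the observation that $\langle\sig(O),\rcl(u)\rangle=\langle\sig(O),u\rangle$ for every loop $O$ (right-closing a loop does nothing, since $R_O$ is constant), and then sandwich the loop invariance of $u$ between two applications of this identity. The only cosmetic difference is that you verify the starting-point definition of loop invariance directly, whereas the paper verifies the equivalent formulation as conjugation invariance for loops, i.e.\ $\langle\sig(O),u\rangle=\langle\sig(\backwards{X}\sqcup O\sqcup X),u\rangle$, which coincides with yours by \Cref{prop:conj_for_loops_implies_loop}.
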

\begin{proof}
    Let $u\in\loopInvariants$.
    Then, for any loop $O$ and any path $X$,
    \begin{equation*}
     \langle\sig(O),\rcl(u)\rangle=\langle\sig(O),u\rangle=\langle\sig(\backwards{X}\sqcup O\sqcup X),u\rangle=\langle\sig(\backwards{X}\sqcup O\sqcup X),\rcl(u)\rangle,
    \end{equation*}
    and thus $\rcl(u)$ is a loop invariant.
\end{proof}

We conclude
\begin{equation*}
    \rcl\loopInvariants\cong[V,\R^d]^{\perp}/S
\end{equation*}
since the kernel of $\rcl$ is precisely $S$.

\begin{corollary}
    The intersection of right-closure invariants with $S$ is $\{0\}$.
\end{corollary}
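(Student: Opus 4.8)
The plan is to reduce the statement to three facts already in hand in this section: that the right-closure invariants are exactly $\im\rcl$ (the corollary following \Cref{lem:rcl}), that $\ker\rcl = S$ (\Cref{lem:ker_rcl_S}), and that $\rcl$ is idempotent. Once these are in place, the corollary is nothing more than the elementary observation that the image and kernel of an idempotent linear operator intersect only in zero, so there is essentially no work beyond invoking the previous lemmas.

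Concretely, I would argue as follows. Let $u$ lie in the intersection of the right-closure invariants with $S$. First I would use $u \in S = \ker\rcl$ to conclude $\rcl(u) = 0$. Next, since $u$ is a right-closure invariant, $u \in \im\rcl$, so $u = \rcl(\psi)$ for some $\psi \in T(\R^d)$. Applying idempotency gives
\begin{align*}
    u = \rcl(\psi) = \rcl(\rcl(\psi)) = \rcl(u) = 0,
\end{align*}
which is the claim. Equivalently, one could phrase this as the direct-sum decomposition $T(\R^d) = \im\rcl \oplus \ker\rcl = \im\rcl \oplus S$ induced by the projection $\rcl$, of which the asserted triviality of the intersection is simply one half.

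I expect no genuine obstacle here, since the argument is purely formal: all the substantive content has already been extracted into \Cref{lem:ker_rcl_S} (identifying the kernel of $\rcl$ with the shuffle ideal $S$) and into the idempotency statement for $\rcl$. The only point requiring care is to invoke the right lemma for each inclusion — using $\ker\rcl = S$ for the membership $u \in \ker\rcl$ and the $\im\rcl$-characterization of right-closure invariants for $u = \rcl(\psi)$ — so that the idempotency computation can be applied cleanly.
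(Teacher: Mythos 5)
Your proposal is correct and follows essentially the same route as the paper, which proves the corollary in one line by noting $T(\R^d)=S\oplus\im\rcl$ since $\rcl$ is a projection with $\ker\rcl=S$. Your explicit computation $u=\rcl(\psi)=\rcl(\rcl(\psi))=\rcl(u)=0$ is just the spelled-out form of that direct-sum argument.
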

\begin{proof}
    $T(\R^d)=S\oplus \im\rcl$, since $\rcl$ is a projection.
\end{proof}

\begin{proposition}
  $\loopInvariants=\ker (\rcl-\lcl)$
\end{proposition}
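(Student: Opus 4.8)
The conceptual reason the identity should hold is geometric: for any path $X$, its right closure $X\sqcup R_X$ and its left closure $R_X\sqcup X$ are the \emph{same} loop read from two different starting points. Indeed, writing $C=X$ and $D=R_X$, the former is $C\sqcup D$ and the latter is $D\sqcup C$, so they differ only by their starting point. Since $\langle\sig(X),\rcl(\EL)\rangle=\langle\sig(X\sqcup R_X),\EL\rangle$ and (by the left-right mirror of \Cref{lem:rcl}) $\langle\sig(X),\lcl(\EL)\rangle=\langle\sig(R_X\sqcup X),\EL\rangle$, the functional $\rcl(\EL)-\lcl(\EL)$ records exactly whether $\EL$ distinguishes these two loops. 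To turn this into a clean argument I would instead proceed algebraically, using that $\rcl$ is a projection with $T(\R^d)=S\oplus\im\rcl$ and $\ker\rcl=S$ (\Cref{lem:ker_rcl_S}), together with the mirror-image statements for $\lcl$ (idempotency, $\ker\lcl=S$, and ``left-closure invariant $\Leftrightarrow$ $\in\im\lcl$''), all of which follow from the left--right symmetry of the definitions.

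The plan is then to write $\EL=\EL_S+\EL_R$ with $\EL_S\in S$ and $\EL_R\in\im\rcl$, so that $\rcl(\EL)=\EL_R$. First I would show $\lcl(\EL_S)=0$: since $\EL_S\in S$ vanishes on every loop and $R_X\sqcup X$ is a loop for every path $X$, we get $\langle\sig(X),\lcl(\EL_S)\rangle=\langle\sig(R_X\sqcup X),\EL_S\rangle=0$ for all $X$, whence $\lcl(\EL_S)=0$ because (grouplike) path signatures span each graded piece. Consequently $(\rcl-\lcl)(\EL)=\EL_R-\lcl(\EL_R)$, and using idempotency of $\lcl$,
\[
  \EL\in\ker(\rcl-\lcl)\iff \lcl(\EL_R)=\EL_R\iff \EL_R\in\im\lcl .
\]

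Finally I would close the loop through the earlier characterization of loop-and-closure invariants. Because $\EL_R\in\im\rcl$ already, the condition $\EL_R\in\im\lcl$ says precisely that $\EL_R$ is simultaneously a left- and a right-closure invariant; by the proposition characterizing loop-and-closure invariants this is equivalent to $\EL_R$ being a right-closure invariant \emph{and} a loop invariant, which, the former being automatic, reduces to $\EL_R\in\loopInvariants$. Since $S\subseteq\loopInvariants$ and $\loopInvariants$ is a linear space, $\EL=\EL_S+\EL_R\in\loopInvariants$ iff $\EL_R\in\loopInvariants$. Chaining these equivalences yields $\EL\in\ker(\rcl-\lcl)\iff\EL_R\in\loopInvariants\iff\EL\in\loopInvariants$, as claimed. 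The only genuine work is the mirror bookkeeping for $\lcl$ and the clean identification of $\im\rcl\cap\im\lcl$ with the loop-and-closure invariants; the main obstacle, such as it is, is making sure the symmetric facts for $\lcl$ are legitimately in hand, which the left--right symmetry of the closure constructions guarantees.
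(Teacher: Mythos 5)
Your proof is correct, but it takes a genuinely different route from the paper's. The paper argues directly on pairings with path signatures: for $\loopInvariants\subseteq\ker(\rcl-\lcl)$ it uses precisely your opening observation---that $X\sqcup R_X$ and $R_X\sqcup X$ are the same loop read from two starting points---to get $\langle\sig(X),\rcl(\EL)\rangle=\langle\sig(X),\lcl(\EL)\rangle$ for all $X$; for the converse it feeds $\rcl(y)=\lcl(y)$ back through \Cref{lem:rcl} to obtain invariance of $\langle\sig(\cdot),y\rangle$ under conjugation by linear paths, iterates, and closes with Chen--Chow. You instead decompose $\EL=\EL_S+\EL_R$ along $T(\R^d)=S\oplus\im\rcl$, reduce membership in $\ker(\rcl-\lcl)$ to $\EL_R\in\im\rcl\cap\im\lcl$, and identify that intersection with the loop-and-closure invariants via the paper's (unlabelled) proposition that an element is both left- and right-closure invariant iff it is right-closure invariant and loop invariant; together with $S\subseteq\loopInvariants$ this chains to the claim. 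This is non-circular, since that earlier proposition precedes the statement and already contains the analytic content (iterated conjugation by linear paths plus Chen--Chow), so your argument avoids re-running it; as a bonus it makes the structural refinement $\ker(\rcl-\lcl)=S\oplus(\im\rcl\cap\im\lcl)=S\oplus\rcl\,\loopInvariants$ explicit, which the paper's direct argument does not exhibit. The cost is the mirror bookkeeping you flag yourself: the paper never defines $\lcl$ or records its properties, so a complete write-up must state $\lcl=\shuffle\circ(H\otimes\id)\circ\Delta_{\bullet}$, the analogue of \Cref{lem:rcl}, idempotency, $\ker\lcl=S$ (your direct verification of $\lcl(\EL_S)=0$ via loops and the spanning property of grouplikes is a valid substitute), and the characterization of left-closure invariants as $\im\lcl$; all of these do follow verbatim by left--right symmetry, so the gap is expository rather than mathematical.
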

\begin{proof}
 If $\EL\in\loopInvariants$, then
 \begin{equation*}
  \langle\sig(X),\rcl(\EL)\rangle=\langle\sig(X\sqcup R_X),\EL\rangle=\langle \sig(R_X\sqcup X),\EL\rangle=\langle\sig(X),\lcl(\EL)\rangle
 \end{equation*}
 for arbitrary $X$,
 so $\rcl(\EL)=\lcl(\EL)$.

 If $\rcl(y)=\lcl(y)$,
 then
 \begin{equation*}
 \langle \sig(R_X\sqcup X),y\rangle=\langle\sig(X),\lcl(y)\rangle=\langle\sig(X),\rcl(y)\rangle=\langle\sig(X\sqcup R_X),y\rangle
 \end{equation*}
 so iteratively, $\langle\sig(\cdot),y\rangle$ is invariant under conjugation with piecewise linear paths,
 and so by Chen-Chow $y$ is a loop invariant.
\end{proof}

\begin{proposition}
\label{prop:rclrotarea}
We have
\begin{equation*}
    (\word{12}-\word{21})^{\shuffle 2}=2\cdot\rcl\circ\rot(\word{1212}).
\end{equation*}

More generally
\begin{equation*}
    (\word{12}-\word{21})\shuffle(\word{13}-\word{31})=2\cdot\rcl\circ\rot(\word{1213})
\end{equation*}
and
\begin{equation*}
    (\word{12}-\word{21})^{\shuffle 3}=4\cdot\rcl\circ\rot(\word{121212})+16\cdot\rcl\circ\rot(\word{121122}),
\end{equation*}
or more generally
\begin{equation*}
    (\word{12}-\word{21})^{\shuffle 2}\shuffle(\word{13}-\word{31})=4\cdot\rcl\circ\rot(\word{121213})+8\cdot\rcl\circ\rot(\word{121123})+8\cdot\rcl\circ\rot(\word{212113})
\end{equation*}
and
\begin{align*}
    &(\word{12}-\word{21})\shuffle(\word{13}-\word{31})\shuffle(\word{23}-\word{32})\\
    &\qquad=-8\cdot\rcl\circ\rot\word{121323}-16\cdot\rcl\circ\rot\word{212133}-16\cdot\rcl\circ\rot\word{122133}.
\end{align*}
Thus $(\word{ij}-\word{ji})^{\shuffle n}$ is contained in $\im\rcl\circ\rot$ for $n\geq 2$,
and more generally $(\word{ij}-\word{ji})^{\shuffle n}\shuffle(\word{ik}-\word{ki})^{\shuffle m}\shuffle(\word{jk}-\word{kj})^{\shuffle l}$ is contained in $\im\rcl\circ\rot$ for $n+m+l\geq 2$.

However, $(12-21)\shuffle(34-43)$ is \emph{not} contained in $\im\rcl\circ\rot$ for $d\geq 4$.

\end{proposition}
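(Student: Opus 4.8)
The displayed identities are base cases that I would verify by direct expansion: write each left-hand side as an explicit linear combination of words via the shuffle product, compute $\rot$ of the relevant necklace(s), apply $\rcl=\shuffle\circ(\id\otimes H)\circ\Delta_\bullet$, and compare coefficients. These are routine finite computations in a fixed level, so the substance lies in the two general assertions. Throughout I use that $\im\rcl\circ\rot=\rcl(\conjInvariants)$ (since $\conjInvariants=\im\rot$) and that this is a \emph{shuffle subalgebra} of $T(\R^d)$: indeed $\conjInvariants$ is a shuffle subalgebra and $\rcl$ is a shuffle homomorphism, so its image is too.

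\textbf{Positive direction.} Write $A_{ij}:=\word{ij}-\word{ji}$. Permuting the alphabet $\{1,2,3\}\to\{i,j,k\}$ is an algebra automorphism commuting with $\rot$, $\rcl$ and $\shuffle$, so the listed identities show that \emph{every} shuffle monomial of total degree $2$ or $3$ in the three areas $A_{ij},A_{ik},A_{jk}$ lies in $\im\rcl\circ\rot$: degree $2$ is covered by the pure square $A_{ij}^{\shuffle 2}$ and the shared-letter product $A_{ij}\shuffle A_{ik}$ (every pair among $A_{ij},A_{ik},A_{jk}$ shares exactly one letter), and degree $3$ by $A_{ij}^{\shuffle 3}$, $A_{ij}^{\shuffle 2}\shuffle A_{ik}$ and $A_{ij}\shuffle A_{ik}\shuffle A_{jk}$. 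Since $\im\rcl\circ\rot$ is closed under $\shuffle$ and every integer $N\ge 2$ can be written as $2a+3b$ with $a,b\in\N$, an arbitrary monomial of total degree $N\ge 2$ factors as a shuffle product of degree-$2$ and degree-$3$ monomials (pair the factors two-by-two when $N$ is even; split off one degree-$3$ monomial first when $N$ is odd). Each factor lies in $\im\rcl\circ\rot$, hence so does the product; the single-area claim is the case $m=l=0$.

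\textbf{Negative direction.} Set $A:=(\word{12}-\word{21})\shuffle(\word{34}-\word{43})\in T_4(\R^d)$ for $d\ge 4$. As $\rcl$ is a shuffle homomorphism and each area is $\rcl$-invariant, $A$ is a right-closure invariant, i.e. $\rcl(A)=A$. Using $\ker\rcl=S$ (\Cref{lem:ker_rcl_S}) and $T(\R^d)=S\oplus\im\rcl$, one gets $A\in\rcl(\conjInvariants)$ iff the coset $A+S$ meets $\conjInvariants$, i.e. iff $A\in\conjInvariants_4+S_4$. By \Cref{lem:VnSn} and $\conjInvariants=[T(\R^d),\R^d]^\perp$ (\Cref{prop:conj_invariants}) we have $(\conjInvariants_4+S_4)^\perp=V_4\cap[T_3(\R^d),\R^d]$, so it suffices to exhibit $\Phi\in V_4$ annihilating all conjugation invariants yet pairing nontrivially with $A$. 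I claim $\Phi_0:=[\word{1},\word{2}]\itensor[\word{3},\word{4}]-[\word{1},\word{3}]\itensor[\word{2},\word{4}]+[\word{1},\word{4}]\itensor[\word{2},\word{3}]$ works. First, $\Phi_0\in\FL_2(\R^d)\itensor\FL_2(\R^d)\subset V_4$ by \Cref{prop:VnLiePoly}, so $\Phi_0\perp S_4$. Second, $\langle\Phi_0,A\rangle\ne 0$: among the three summands only the one aligned with the area partition $\{12\mid 34\}$ contributes, the other two pairing to zero with $A$ by a direct check, which gives $\langle\Phi_0,A\rangle=4$. Third, $\Phi_0$ annihilates $\conjInvariants$: every Lie bracket pairs to zero with each $\rot(w)$ (the proposition stating $\langle[M,N],\rot(w)\rangle=0$), so only the six multidegree-$(1,1,1,1)$ necklaces can contribute, and the coefficients $(1,-1,1)$ are precisely the (one-parameter) solution of the small linear system $\langle\,\cdot\,,\rot(N)\rangle=0$ over these necklaces. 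Hence $A\notin\conjInvariants_4+S_4$, so $A\notin\im\rcl\circ\rot$.

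\textbf{Main obstacle.} The identities and the positive direction are straightforward once the shuffle-subalgebra structure is in place. The real work is the construction of $\Phi_0$: the obstruction must lie in $V_4$ (to ignore the trivial invariants $S$), be orthogonal to \emph{all} conjugation invariants, and still detect the four-distinct-letter product. The naive guess $[\word{1},\word{2}]\itensor[\word{3},\word{4}]$ satisfies the first and third requirements but fails the second (it pairs nontrivially with $\rot(\word{1234})$); passing to the antisymmetrized combination $\Phi_0$, found by solving the necklace system above, is what makes all three conditions hold at once.
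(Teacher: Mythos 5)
Your proposal is correct, and it is worth noting that the paper records \emph{no} proof of \Cref{prop:rclrotarea} at all: the five displayed identities are evidently computer-checked, the general containments follow from them via the word ``Thus'', and the final non-containment is asserted without argument. Your positive direction fills in exactly the argument the paper's ``Thus'' gestures at: $\im\rcl\circ\rot=\rcl(\conjInvariants)$ is a shuffle subalgebra (since $\conjInvariants=\im\rot$ is one and $\rcl$ is a shuffle homomorphism), letter permutations commute with $\rcl$, $\rot$ and $\shuffle$, and every monomial of degree $N\ge2$ in the three areas on $\{i,j,k\}$ splits into blocks of degrees $2$ and $3$, each matching one of the five identities up to sign and relabeling — so nothing genuinely different there. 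The real added value is the negative direction, where the paper offers nothing, and your witness checks out: writing $P_1=[\word1,\word2]\itensor[\word3,\word4]$, $P_2=[\word1,\word3]\itensor[\word2,\word4]$, $P_3=[\word1,\word4]\itensor[\word2,\word3]$ and $A=(\word{12}-\word{21})\shuffle(\word{34}-\word{43})$, I verified that $P_1,P_2,P_3\in V_4$ by \Cref{prop:VnLiePoly}, that by multihomogeneity only the six necklaces on $\{\word1,\word2,\word3,\word4\}$ can pair nontrivially with $c_1P_1+c_2P_2+c_3P_3$, and that the resulting system is
\begin{align*}
c_1-c_3=0,\qquad -c_1-c_2=0,\qquad c_2+c_3=0,
\end{align*}
with solution space exactly $t\,(1,-1,1)$, so $\Phi_0=P_1-P_2+P_3$ is indeed orthogonal to all of $\conjInvariants$; moreover $\langle P_1,A\rangle=4$ and $\langle P_2,A\rangle=\langle P_3,A\rangle=0$, as you claim. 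Your reduction $A\in\im\rcl\circ\rot\Leftrightarrow A\in\conjInvariants_4+S_4$ is also sound: it uses precisely $\rcl(A)=A$ (areas are $\rcl$-fixed and $\rcl$ is a shuffle homomorphism), $\ker\rcl=S$ (\Cref{lem:ker_rcl_S}) and idempotence of $\rcl$, and then $\Phi_0\in V_4\cap\conjInvariants_4^{\perp}=(\conjInvariants_4+S_4)^{\perp}$ with $\langle\Phi_0,A\rangle=4\neq0$ finishes the argument. The one caveat, at parity with the paper itself: you leave the five base identities as ``routine finite computations'' without performing them; a fully self-contained proof would still have to carry out these level-$4$ and level-$6$ expansions.
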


Note that we have $u=v+\text{shuffle of letters}$ if and only if $\rcl(u)=\rcl(v)$,
so shuffles of areas can be considered letter-reduced conjugation invariants
if and only if they are in $\im\rcl\circ\rot$.

\begin{conjecture}
  \label{conj:rot}
$(\word{i}_1\word{i}_2-\word{i}_2\word{i}_1)\shuffle\dots\shuffle(\word{i}_{2k-1}\word{i}_{2k}-\word{i}_{2k}\word{i}_{2k-1})$ is \emph{not} an element of $\im\rcl\circ\rot$ if all $\word{i}_j$ are distinct,
but \emph{is} contained in $\im\rcl\circ\rot$ if the letters are non-distinct.

Furthermore, $(\word{ij}-\word{ji})\shuffle u\in\rcl\circ\rot$ for all $u\in\rcl\circ\rot$.
\end{conjecture}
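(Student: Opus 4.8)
The plan is to reduce everything to the single structural fact that $\im\rcl\circ\rot$ is a shuffle subalgebra, together with a semigroup argument on degrees. First I would record two facts. By \Cref{lem:rcl} and the grouplikeness of signatures, $\rcl$ is a shuffle homomorphism (the same fact already used to build the algebra isomorphism above), and since $\conjInvariants=\im\rot$ we have $\im\rcl\circ\rot=\rcl(\conjInvariants)$; hence $\im\rcl\circ\rot$ is a shuffle subalgebra of $T(\R^d)$. A one-line computation with the definition of $\rcl$ gives $\rcl(\word{ij}-\word{ji})=\word{ij}-\word{ji}$, so every shuffle monomial in areas is fixed by $\rcl$, i.e.\ lies in $\im\rcl$; by \Cref{lem:ker_rcl_S} (that is, $\ker\rcl=S$) such a monomial lies in $\im\rcl\circ\rot$ if and only if it is congruent modulo $S$ to a conjugation invariant.

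Next I would verify the five displayed identities directly. Each is a finite check: expand $\rot$ of the indicated word, apply $\rcl=\shuffle\circ(\id\otimes H)\circ\Delta_\bullet$, and compare with the word-expansion of the left-hand shuffle product of areas; equivalently, using $\ker\rcl=S$, check that the difference of the two sides lies in the shuffle ideal $S$. The point is that these identities are chosen precisely so that, up to relabeling of letters, they realize every degree-two and every degree-three monomial in areas supported on at most three letters, namely $a^{\shuffle 2}$, $a\shuffle b$, $a^{\shuffle 3}$, $a^{\shuffle 2}\shuffle b$ and $a\shuffle b\shuffle c$.

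The two general positive statements then follow formally and are not the hard part. Since $\im\rcl\circ\rot$ is a shuffle subalgebra containing all degree-two and degree-three area monomials on at most three letters, and since every monomial in areas of total degree $N\ge2$ can be split into blocks of size two and three (as every integer $\ge2$ is a nonnegative combination of $2$ and $3$), any such monomial is a shuffle product of elements of $\im\rcl\circ\rot$ and hence lies in $\im\rcl\circ\rot$. Specializing the blocks gives both $(\word{ij}-\word{ji})^{\shuffle n}\in\im\rcl\circ\rot$ for $n\ge2$ and the three-letter statement for $n+m+l\ge2$; note that within three letters any two distinct areas share a letter, so the disjoint situation never arises here.

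For the negative statement I would dualize the membership criterion. Because $\conjInvariants=[T(\R^d),\R^d]^\perp$ and $S=V^\perp$ (\Cref{lem:VnSn}), a $\rcl$-fixed element lies in $\im\rcl\circ\rot=\rcl(\conjInvariants)$ if and only if it is orthogonal to $[T(\R^d),\R^d]\cap V$. As $(\word{12}-\word{21})\shuffle(\word{34}-\word{43})$ is a loop invariant it already annihilates $[V,\R^d]$, so the obstruction lives in the quotient $([T_3,\R^4]\cap V_4)/[V_3,\R^4]$. The point, special to $d\ge4$, is that this quotient is nonzero (the last two columns of \Cref{tab:4dloopinv} differ by exactly one at level $4$); the main obstacle is to produce an explicit $b$ spanning it and to check $\langle(\word{12}-\word{21})\shuffle(\word{34}-\word{43}),b\rangle\ne0$. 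This requires care, since the symmetric candidate $[[\word{1},\word{2}],[\word{3},\word{4}]]$ in fact already lies in $[V_3,\R^4]$ (it equals $[[[\word{1},\word{2}],\word{3}],\word{4}]-[[[\word{1},\word{2}],\word{4}],\word{3}]$) and pairs to zero, so a genuinely asymmetric combination of commutators is needed. Equivalently, and more transparently, one can settle the claim by a finite linear-algebra computation at level $4$ in $d=4$, verifying that there is no $s\in S_4$ for which $(\word{12}-\word{21})\shuffle(\word{34}-\word{43})+s$ is a conjugation invariant.
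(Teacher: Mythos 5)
The statement you are trying to prove is \Cref{conj:rot}, which the paper leaves \emph{open}: there is no proof in the paper to compare against, and the five displayed identities you propose to verify are exactly the content of \Cref{prop:rclrotarea}, whose ``more generally'' clauses are deliberately confined to area monomials supported on at most three letters. Your attempt to upgrade these to the full conjecture has a genuine gap at the block-splitting step. Writing a degree $N\ge 2$ as a sum of $2$s and $3$s only helps if every block individually lies in $\im\rcl\circ\rot$, and your verified blocks are precisely the degree-two and degree-three area monomials on at most three letters, where any two areas automatically share a letter. A degree-one block, i.e.\ a single area $\word{ij}-\word{ji}$, is fixed by $\rcl$ but is \emph{not} in $\im\rcl\circ\rot$: areas are not conjugation invariants, and the letter-reduced conjugation invariants vanish at level $2$ in all the paper's tables. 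Consequently a monomial such as $(\word{12}-\word{21})^{\shuffle 2}\shuffle(\word{34}-\word{43})$ --- ``non-distinct'' in the sense of the conjecture, hence claimed to lie in $\im\rcl\circ\rot$ --- admits no partition into your blocks: any block containing $(\word{34}-\word{43})$ is either that single area or involves four distinct letters, and neither case is covered. So the subalgebra property plus your five identities does not reach the general positive claim.

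The ``Furthermore'' clause does not ``follow formally'' either: since $\word{ij}-\word{ji}\notin\im\rcl\circ\rot$, shuffling by it is not multiplication by an element of the subalgebra, and closure of $\im\rcl\circ\rot$ under this operation is exactly the nontrivial content of that clause --- your argument silently assumes it. For the negative half, your dual criterion (an $\rcl$-fixed element lies in $\rcl(\conjInvariants)$ iff it annihilates $[T(\R^d),\R^d]\cap V$, via \Cref{lem:VnSn}, \Cref{lem:ker_rcl_S} and $\conjInvariants=[T(\R^d),\R^d]^\perp$) is a sensible framework, and your observation that $[[\word{1},\word{2}],[\word{3},\word{4}]]$ already lies in $[V_3,\R^4]$ by Jacobi is correct and shows why the obstruction is subtle; but the finite computation you describe settles only $k=2$, which the paper already asserts in \Cref{prop:rclrotarea}. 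The conjecture claims non-membership for \emph{all} $k$ with $2k$ distinct letters, and you give no argument (for instance, an explicit pairing element for general $k$, or an inductive reduction) beyond the base case. In short: you correctly assemble the known ingredients --- $\rcl$ is a shuffle homomorphism fixing areas, $\ker\rcl=S$, $\im\rot=\conjInvariants$ --- but each of the three assertions of \Cref{conj:rot} remains unproved, consistent with the paper stating it as a conjecture rather than a theorem.
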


\begin{proposition}
If $d\ge2$, there are infinitely many algebraically (shuffle) independent loop-and-closure invariants in $T(\R^d)$.
\end{proposition}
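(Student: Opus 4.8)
The plan is to build the required invariants as right-closures of the conjugation invariants already furnished by \Cref{prop:infmanyconjinv}, and to establish their independence by pairing with signatures of loops. Since conjugation invariants are loop invariants, since $\rcl$ maps loop invariants to loop invariants, and since $\im\rcl$ is exactly the space of right-closure invariants, each $\rcl(\phi)$ with $\phi\in\conjInvariants$ is automatically a loop-and-closure invariant. Thus it suffices to exhibit infinitely many conjugation invariants $\phi$ whose images $\rcl(\phi)$ are shuffle-independent.

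The key reduction is a \emph{loop-evaluation bridge}. Because $\rcl$ is a shuffle homomorphism onto the subalgebra $\im\rcl$ and $T(\R^d)=S\oplus\im\rcl$, any shuffle-polynomial relation among loop-and-closure invariants which holds after pairing with $\sig(O)$ for every loop $O$ must already be the zero relation: such a relation lands in $S$ (vanishing on all zero-increment grouplikes characterizes $S$, by \Cref{lem:ker_rcl_S} together with Chen--Chow) while lying in $\im\rcl$, hence it vanishes; conversely genuine relations are seen on loops because each $\sig(O)$ is grouplike. Therefore a family of loop-and-closure invariants is shuffle-independent precisely when the functionals $O\mapsto\langle\sig(O),\cdot\rangle$ are algebraically independent on loops. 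Moreover, for a loop $O$ the increment vanishes, so $R_O$ is trivial and $\langle\sig(O),\rcl(\phi)\rangle=\langle\sig(O),\phi\rangle$. The independence of the $\rcl(\phi_{n,m})$ thus reduces to the independence of the loop-functionals $O\mapsto\langle\sig(O),\phi_{n,m}\rangle$, where $\phi_{n,m}=\rot(\word{1}^{\bullet(m+1)}(\word{21})^{\bullet(n-1)}\word{2})$ are the conjugation invariants from the proof of \Cref{prop:infmanyconjinv}.

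It then remains to realize the power sums on loops. I would close the staircase path $\gamma$ of \Cref{prop:infmanyconjinv} into a loop $O_{\vec x}$ by appending two axis-parallel return segments, so that $\sig(O_{\vec x})=\sig(\gamma)\bullet\exp(-n\word{2})\bullet\exp(-X\word{1})$ with $X=\sum_i x_i$, and recompute $\langle\sig(O_{\vec x}),\phi_{n,m}\rangle$ by Chen's identity. The term in which every letter of $\phi_{n,m}$ pairs with $\sig(\gamma)$ reproduces $\tfrac{1}{n!}\,x_1\cdots x_n\sum_i x_i^m$; exploiting the cyclic letter-pattern of $\phi_{n,m}$ through a suitable orientation of the return segments, I would argue the remaining contributions are expressible through the elementary symmetric functions of $\vec x$ already carried by the factor $x_1\cdots x_n$, so that $(\langle\sig(O_{\vec x}),\phi_{n,m}\rangle)_{m=1}^n$ stays algebraically independent by Newton's identities. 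Letting $n\to\infty$ and invoking the basis-exchange property for algebraic independence, exactly as in \Cref{prop:infmanyconjinv}, then yields infinitely many shuffle-independent loop-and-closure invariants.

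The main obstacle is precisely this last computation: unlike the open-path evaluation, every contribution to $\langle\sig(O_{\vec x}),\phi_{n,m}\rangle$ has the same total degree in $\vec x$, so there is no cheap leading-order truncation and one must genuinely show the closing segments introduce no new algebraic relations among the $m$-indexed functionals. Should a direct computation prove delicate, a robust alternative is to construct $O_{\vec x}$ from $n$ disjoint area-carrying bumps with prescribed signed areas and to choose the $\phi$'s so that their loop values read off symmetric functions of the bump areas directly; the transfer of algebraic independence from the power sums is then immediate, and the bridge above upgrades it to shuffle-independence of the corresponding loop-and-closure invariants.
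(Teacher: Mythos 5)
Your reduction is correct, and it is essentially the skeleton the paper uses: $\rcl$ of a conjugation invariant is automatically a loop-and-closure invariant; for a loop $O$ one has $\langle\sig(O),\rcl(\phi)\rangle=\langle\sig(O),\phi\rangle$ by \Cref{lem:rcl}; and your ``bridge'' is valid, since a shuffle-polynomial in elements of $\im\rcl$ again lies in $\im\rcl$, pairing with a grouplike is a shuffle character, and an element of $\im\rcl$ vanishing on all (piecewise linear, hence by Chow all) zero-increment grouplikes lies in $S\cap\im\rcl=\{0\}$ by \Cref{lem:ker_rcl_S} and $T(\R^d)=S\oplus\im\rcl$. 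So you have correctly reduced the proposition to exhibiting loops and invariants whose loop evaluations form an algebraically independent family, after which the power-sum and basis-exchange argument of \Cref{prop:infmanyconjinv} applies verbatim.

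However, that exhibiting step is the entire content of the proof, and your proposal does not carry it out. Your primary route --- closing the staircase of \Cref{prop:infmanyconjinv} and reusing $\phi_{n,m}=\rot(\word{1}^{\bullet(m+1)}(\word{21})^{\bullet(n-1)}\word{2})$ --- is deferred with ``I would argue the remaining contributions are expressible through elementary symmetric functions,'' and the obstacle you yourself flag is real: every contribution is homogeneous of the same degree in $\vec x$, the closure terms involve $X=\sum_i x_i$ and pairings of trailing letters with the return segments, so the evaluations are no longer of the triangular form $x_1\cdots x_n\cdot(c\,p_m+\text{lower})$ without an actual computation, and none is given. Your fallback (``area-carrying bumps whose loop values read off symmetric functions'') is in spirit exactly the paper's construction, but again the invariants are left unspecified, and the paper shows this choice is not automatic: it sidesteps the closure problem entirely by evaluating on paths that are \emph{already} loops, namely concatenations $B_1\sqcup\dots\sqcup B_{2n}$ of $2n$ closed rectangles $B_i=(a_i\cdot e_1)\sqcup e_2\sqcup(-a_i\cdot e_1)\sqcup(-e_2)$, and uses the tailored invariants $\rcl\circ\rot(\word{1}^{\bullet 2m+1}(\word{2121})^{\bullet 2n-1}\word{2})$, whose doubled letter pattern is what makes the evaluation come out cleanly as $\frac{(-1)^n}{n!}a_1^2\cdots a_{2n}^2\cdot\sum_{i=1}^{2n}2a_i^{2m}$, i.e.\ a common factor times even power sums. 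In short: same architecture as the paper, correct soft steps, but the decisive evaluation --- the choice of loop and of words making the loop functionals provably independent --- is a genuine gap, and the route you prioritize would require delicate extra work that the paper's already-closed-loop construction renders unnecessary.
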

The proof is very similar to that of Proposition \ref{prop:infmanyconjinv}.
\begin{proof}
Let n be arbitrary. We look at the axis parallel path with $8n$ steps given by $B_1\sqcup\dots B_{2n}$
where each $B_i$ is a rectangle, 
given by $B_i=(a_i\cdot e_1)\sqcup e_2 \sqcup (-a_i\cdot e_1)\sqcup (-e_2)$.

Under the signature, the loop-and-closure invariants
\begin{equation*}
 \rcl\circ\rot(\word{1}^{\bullet 2m+1}(\word{2}\word{1}\word{2}\word{1})^{\bullet 2n-1}\word{2})
\end{equation*}
evaluate to
\begin{equation*}
 \frac{(-1)^n}{n!}a_1^2\cdots a_{2n}^2\cdot\sum_{i=1}^{2n} 2a_i^{2m}
\end{equation*}
By algebraic independence of $\big(\sum_{i=1}^{2n} a_i^{2m})_{m=1}^n$ we obtain that there are at least $n$ shuffle-
independent loop-and-closure invariants.

Since $n$ was arbitrary, by the basis exchange property for algebraic independence, we thus
conclude that there exists a set of infinitely many algebraically independent loop-and-closure
invariants.
\end{proof}

\section{Outlook}

In addition to the Conjectures
\ref{conj:loopinvareSplusareaconj},
\ref{conj:loopinvareSplusareaconj2d3d},
\ref{conj:shuffle_of_letters},
\ref{conj:closure_conj}
and \ref{conj:rot},
we are interested in the following questions:
\begin{itemize}

  \item
  As is known in the literature
  on higher parallel transport,
  and elaborated in a notation close to ours
  in
  \cite[Section 3]{chevyrev2024multiplicative},
  the iterated-integrals signature of a closed loop
  can be found in a related \emph{surface signature}.
  Does this viewpoint help to understand the loop invariants?

  \item What about expected signatures \cite{chevyrevlyons}? 
  Is there a nice algebraic description of expected signature varieties \cite[Section~7]{AFS18} and in particular expected loop invariants 
  for for example Brownian loops (see e.g.\ \cite[Section~3]{revuzyor} 
  and \cite{AnkerBougerolJeulin}\footnote{Though we only refer to the $\mathbb{R}^d$ valued finite time $T$ case for now}), 
  i.e.\ Brownian motion $B:\Omega\times [0,T]\to\mathbb{R}^d$ with drift $\mu$ and covariance matrix $\Sigma$, conditioned on the event that $B(0)=B(T)$?

  \item How do conjugation invariants, loop invariants and closure invariants relate for the iterated sums signature \cite{diehl2020time}? In this setting, the signature characterizes the time series not up to tree-like equivalence, but up to time-warping. 
  So the situation is certainly different.
  This is interesting future work, where we will build upon \cite{adin2021cyclic}.

  \item The maybe most important
  open question is the following:
  If we are given the value of all loop invariants for a loop $O$, then to what extent does this characterize $O$?
  In other words, what is the equivalence relation given by \emph{all the loop invariants of the loops $O$ and $O'$ agree?}
  This question is also posed in the Outlook of \cite{lotterpreiss24}, in their setting of cyclic polytopes / positive matrices.
  It is an analogous question to \cite[Conjecture~7.2]{bib:DR2018}, which will be answered in \cite{diehllyonsnipreiss}.
  \item We ask for a geometric interpretation of loop invariants beyond signed area and even dimensional signed volumes (cf.\ \cite{bib:DR2018}, \cite{amendola2023convex} for signed volumes).

\end{itemize}

\bibliographystyle{alpha} 
\bibliography{loopinvariants}

\end{document}